\documentclass[11pt,reqno]{amsart}
\usepackage{graphicx,amsmath,amsthm,verbatim,amssymb,lineno,titletoc}
\usepackage{bbm}
\usepackage{hyperref}
\hypersetup{colorlinks}
\usepackage[numbers]{natbib}
\usepackage{array}

\newcolumntype{M}[1]{>{\centering\arraybackslash}m{#1}}
\newcolumntype{N}{@{}m{0pt}@{}}

\pdfoutput=1

\addtolength{\oddsidemargin}{-.2in}
\addtolength{\evensidemargin}{-.2in}
\addtolength{\textwidth}{.4in}

\newtheorem{theorem}{Theorem}[section]
\newtheorem{prop}[theorem]{Proposition}

\newtheorem{lemma}[theorem]{Lemma}
\newtheorem{corollary}[theorem]{Corollary}

\theoremstyle{remark}
\newtheorem{remark}[theorem]{Remark}
\newtheorem*{example}{Example}

\numberwithin{counter}{section}

\theoremstyle{definition}
\newtheorem{definition}[theorem]{Definition}

\def\Re{\mathrm{Re}}
\def\one{\mathbf{1}}

\def\N{\mathbb{N}}
\def\Z{\mathbb{Z}}

\def\R{\mathbb{R}}
\def\EE{\mathbb{E}}

\def\zero{\mathbf{0}}
\newcommand\old[1]{}

\newcommand{\TV}{\mathrm{TV}}

\newcommand{\tmix}{t_\mathrm{mix}}
\newcommand{\trel}{t_\mathrm{rel}}
\newcommand{\dmin}{d_\mathrm{min}}
\newcommand{\dmax}{d_\mathrm{max}}

\newcommand{\e}{\varepsilon}

\newcommand{\C}{\mathbb{C}}
\newcommand{\G}{\mathcal{G}}
\newcommand{\HH}{\mathcal{H}}
\newcommand{\T}{\mathbb{T}}
\newcommand{\transpose}{\mathsf{T}}

\newcommand{\V}{\widetilde{V}}
\newcommand{\W}{\mathbb{W}}
\DeclareMathOperator{\Var}{\textrm{Var}}

\DeclareMathOperator{\Hom}{\textrm{Hom}}

\DeclareMathOperator{\Span}{\textrm{Span}}
\DeclareMathOperator{\interior}{\textrm{int}}

\newcommand{\st}{\,:\,}

\newcommand{\overbar}[1]{\mkern 2.5mu\overline{\mkern-2.5mu#1\mkern-2.5mu}\mkern 2.5mu}

\allowdisplaybreaks

\begin{document}

\title[Mixing time and eigenvalues of the abelian sandpile]{Mixing time and eigenvalues of the abelian sandpile Markov chain}

\author{Daniel C. Jerison}
\address{Daniel C. Jerison, Department of Mathematics, Cornell University, Ithaca, NY 14853.}
\email{\texttt{jerison@math.cornell.edu}}

\author{Lionel Levine}
\address{Lionel Levine, Department of Mathematics, Cornell University, Ithaca, NY 14853.}
\urladdr{\url{http://www.math.cornell.edu/~levine}}

\author{John Pike}
\address{John Pike, Department of Mathematics, Cornell University, Ithaca, NY 14853.}
\email{\texttt{jpike@cornell.edu}}
\urladdr{\url{http://www.math.cornell.edu/~jp999}}

\thanks{D.C. Jerison and J. Pike were supported in part by NSF grant DMS-0739164.}
\thanks{L. Levine was supported by NSF grant \href{http://www.nsf.gov/awardsearch/showAward?AWD_ID=1455272}{DMS-1455272} and a Sloan Fellowship.}

\date{November 2, 2015}

\keywords{abelian sandpile model, chip-firing, Laplacian lattice, mixing time, multiplicative harmonic function, pseudoinverse, sandpile group, smoothing parameter, spectral gap}
\subjclass[2010]{60J10; 82C20; 05C50}

\begin{abstract}
The abelian sandpile model defines a Markov chain whose states are integer-valued functions on the vertices of a simple connected graph $G$. By viewing this chain as a (nonreversible) random walk on an abelian group, we give a formula for its eigenvalues and eigenvectors in terms of `multiplicative harmonic functions' on the vertices of $G$. We show that the spectral gap of the sandpile chain is within a constant factor of the length of the shortest non-integer vector in the dual Laplacian lattice, while the mixing time is at most a constant times the smoothing parameter of the Laplacian lattice. We find a surprising inverse relationship between the spectral gap of the sandpile chain and that of simple random walk on $G$: If the latter has a sufficiently large spectral gap, then the former has a \emph{small} gap! In the case where $G$ is the complete graph on $n$ vertices, we show 
that the sandpile chain exhibits cutoff at time $\frac{1}{4\pi^{2}}n^{3}\log n$.
\end{abstract}

\maketitle

\section{Introduction}
\label{Introduction}

Let $G=(V,E)$ be a simple connected graph with $n$ vertices, one of which is designated the 
\emph{sink} $s \in V$.
A \emph{sandpile} on $G$ is a function
	\[ \sigma : V \setminus \{s\} \to \N \]
from the nonsink vertices to the nonnegative integers. In the abelian sandpile model \cite{BTW88,Dha}, certain sandpiles are designated as \emph{stable}, and any sandpile can be \emph{stabilized} by a sequence of local moves called \emph{topplings}. (For the precise definitions see Section~\ref{The Sandpile Chain}.)
Associated to the pair $(G,s)$ is a Markov chain whose states are the stable sandpiles. To advance the chain one time step, we choose a vertex $v$ uniformly at random, increase $\sigma(v)$ by one, and stabilize. We think of $\sigma(v)$ as the number of sand grains at vertex $v$. Increasing it corresponds to dropping a single grain of sand on the pile. Topplings redistribute sand, and the role of the sink is to collect extra sand that falls off the pile.

This Markov chain $(\sigma_t)_{t \in \N}$ can be viewed as a 
random walk on a finite abelian group, the \emph{sandpile group}, and consequently its stationary distribution is uniform on the recurrent states.  How long does it take to get close to this uniform distribution? We seek to answer this question in terms of graph-theoretic properties of $G$ and algebraic properties of its Laplacian. 
What graph properties control the mixing time? In other words, how much randomly added sand is enough to make the sandpile `forget' its initial state? 

As we will see in Section~\ref{The Sandpile Chain}, the characters of the sandpile group are indexed by \emph{multiplicative harmonic functions} on $G$. These are functions
	\[  h: V \to \C^* \]
satisfying $h(s)=1$ and
	\[ h(v)^{\deg(v)} = \prod_{w \sim v} h(w) \]
for all $v \in V$, where $\deg(v)$ is the number of edges incident to $v$, and we write $w \sim v$ if $\{v,w\} \in E$. There are finitely many such functions, and they form an abelian group whose order is the number of spanning trees of $G$. Associated to each such $h$ is an eigenvalue of the sandpile chain,
\begin{equation}
\label{lambda-h}
\lambda_h = \frac{1}{n} \sum_{v \in V} h(v).
\end{equation}

Our first result relates the spectral gap of the sandpile chain to the shortest vector in a lattice. Both gap and lattice come in two flavors: continuous time and discrete time. Writing $\HH$ for the set of multiplicative harmonic functions, the continuous time gap is defined as
\[
\gamma_c = \min\{ 1 - \Re(\lambda_h) \,:\, h \in \HH, \, h \not\equiv 1 \}
\]
and the discrete time gap is defined as
\[
\gamma_d = \min\{ 1 - |\lambda_h| \,:\, h \in \HH, \, h \not\equiv 1 \}.
\]

At times it will be useful to enumerate the vertices as $V = \{v_1,\ldots,v_n\}$. In this situation we always take $v_n$ to be the sink vertex. The \emph{full Laplacian} of $G$ is the $n \times n$ matrix
\begin{align}
\label{e.thelaplacian}
\overbar{\Delta}(i,j)= \begin{cases}
\deg(v_i), & i=j\\
-1, & v_i \sim v_j\\
0, & \textrm{else.} \end{cases} 
\end{align}
The \emph{reduced Laplacian}, which we will denote by $\Delta$, is the submatrix of $\overbar{\Delta}$ omitting the last row and column (corresponding to the sink). Note that $\Delta$ is invertible, but $\overbar{\Delta}$ is not, since its rows sum to zero.

The lattice that controls the continuous time gap is $\Delta^{-1} \Z^{n-1}$, the integer span of the columns of $\Delta^{-1}$. To define the analogue for the discrete time gap, we consider the \emph{Moore-Penrose pseudoinverse} $\overbar{\Delta}^+$ of the full Laplacian. We also define:
\begin{align*}
\R_0^n: & \text{ the subspace of vectors in $\R^n$ whose coordinates sum to zero,} \\
\Z_0^n: & \text{ the set of vectors in $\R_0^n$ with integer coordinates,} \\
\W^n: & \text{ the orthogonal projection of $\Z^n$ onto $\R_0^n$.}
\end{align*}

\begin{theorem}
\label{t.gap.intro}
The spectral gap of the continuous time sandpile chain satisfies
	\[ 8\frac{\|y\|_2^2}{n} \leq \gamma_c \leq 2\pi^2 \frac{\|y\|_2^2}{n}, \]
where $y$ is a vector of minimal Euclidean length in $(\Delta^{-1}\Z^{n-1}) \setminus \Z^{n-1}$. 

The spectral gap of the discrete time sandpile chain satisfies
	\[ 8\frac{\|z\|_2^2}{n} \leq \gamma_d \leq 2 \pi^2 \frac{\|z\|_2^2}{n}, \]
where $z$ is a vector of minimal Euclidean length in $(\overbar{\Delta}^+ \Z_0^n) \setminus \W^n$.
\end{theorem}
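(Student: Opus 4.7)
The plan is to parameterize each multiplicative harmonic function by a real-valued ``phase'' vector and then apply elementary sine inequalities to the eigenvalue formula~\eqref{lambda-h}. Writing $h(v) = e^{2\pi i \theta(v)}$, the harmonicity condition $h(v)^{\deg(v)} = \prod_{w \sim v} h(w)$ becomes $\overbar{\Delta}\theta(v) \in \Z$, and $h(s) = 1$ becomes $\theta(s) \in \Z$, which may as well be taken to be $\theta(s) = 0$. Because the rows of $\overbar{\Delta}$ sum to zero, the constraint at the sink is automatic given the constraints at the other vertices, so the nontrivial elements $h \in \HH$ correspond bijectively to the nonzero cosets in $(\Delta^{-1}\Z^{n-1})/\Z^{n-1}$. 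For the discrete time gap, $|\lambda_h|$ is invariant under multiplying $h$ by a global phase, so we drop the normalization at the sink and work in $\R_0^n$; the resulting characters up to phase are parameterized by $(\overbar{\Delta}^+\Z_0^n)/\W^n$. (Any $\theta \in \R_0^n$ with $\overbar{\Delta}\theta \in \Z^n$ satisfies $\overbar{\Delta}\theta \in \Z_0^n$ and hence equals $\overbar{\Delta}^+(\overbar{\Delta}\theta) \in \overbar{\Delta}^+\Z_0^n$, while the image of $\Z^n$ in $\R_0^n$ under orthogonal projection is precisely $\W^n$.)

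Plugging $h(v) = e^{2\pi i \theta(v)}$ into \eqref{lambda-h} gives
\[
1 - \Re(\lambda_h) = \frac{2}{n}\sum_{v \in V} \sin^2\!\big(\pi\theta(v)\big),
\]
and, choosing $c^* \in \R$ so that $e^{-2\pi i c^*}\lambda_h$ is real and nonnegative,
\[
1 - |\lambda_h| = \frac{2}{n}\sum_{v \in V} \sin^2\!\big(\pi(\theta(v) - c^*)\big).
\]
The key analytic input is the two-sided bound $4x^2 \le \sin^2(\pi x) \le \pi^2 x^2$, valid for $|x| \le 1/2$; the upper bound in fact holds for all $x \in \R$ because $|\sin(\pi x)| \le \pi|x|$.

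For the continuous time case, plugging $\theta = y$ (extended by $y(s) = 0$) into the first display and applying the upper trig bound gives $\gamma_c \le 2\pi^2\|y\|_2^2/n$. Conversely, given any nontrivial $h$, choose a representative $\theta \in \Delta^{-1}\Z^{n-1}$ with every coordinate in $[-1/2,1/2]$, which is possible because $\Z^{n-1} \subseteq \Delta^{-1}\Z^{n-1}$ and so one may freely subtract integer vectors. Since $\theta \neq 0$, we have $\|\theta\|_2 \ge \|y\|_2$, and the lower trig bound yields $1 - \Re(\lambda_h) \ge 8\|\theta\|_2^2/n \ge 8\|y\|_2^2/n$.

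The discrete time case is the main obstacle, since after the optimal shift by $c^*$ the resulting vector need not lie in $\R_0^n$, and so we cannot directly compare it to elements of the lattice $\overbar{\Delta}^+\Z_0^n$. The upper bound is again direct: take $\theta = z$, let $c^* = 0$, and apply the upper trig bound. For the lower bound, fix a nontrivial $h$ with representative $\theta \in \overbar{\Delta}^+\Z_0^n \setminus \W^n$, and define $\tilde\theta(v) = \theta(v) - c^* - k(v)$, where $k(v) \in \Z$ is chosen so that $\tilde\theta(v) \in [-1/2, 1/2]$. The lower trig bound then gives $1 - |\lambda_h| \ge 8\|\tilde\theta\|_2^2/n$. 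To compare this with $\|z\|_2^2$, let $\tilde\theta_0$ be the orthogonal projection of $\tilde\theta$ onto $\R_0^n$. Then $\overbar{\Delta}\tilde\theta_0 = \overbar{\Delta}\tilde\theta = \overbar{\Delta}\theta - \overbar{\Delta}k \in \Z_0^n$, so $\tilde\theta_0 \in \overbar{\Delta}^+\Z_0^n$; and a short argument shows that $\tilde\theta_0 \in \W^n$ would force $\theta \in \W^n$, contradicting the nontriviality of $h$. Since orthogonal projection does not increase the Euclidean norm, $\|\tilde\theta\|_2 \ge \|\tilde\theta_0\|_2 \ge \|z\|_2$, and hence $1 - |\lambda_h| \ge 8\|z\|_2^2/n$, completing the proof.
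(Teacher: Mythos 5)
Your proof is correct and follows essentially the same route as the paper's (Propositions \ref{dual-lattice-1} and \ref{dual-lattice-2} plus Theorems \ref{dual-length-1} and \ref{dual-length-2} and their corollaries): parameterize $h$ by a phase vector in the dual Laplacian lattice, apply the two-sided quadratic bound on $1-\cos$ (which you write equivalently as $4x^2 \le \sin^2(\pi x) \le \pi^2 x^2$), and in the discrete-time lower bound use an orthogonal projection onto $\R_0^n$ to land back in $\overbar{\Delta}^+\Z_0^n$ without increasing the norm. The only cosmetic difference is that you fold the paper's intermediate per-$h$ estimates directly into the proof of the gap bounds rather than stating them separately.
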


The upper and lower bounds in Theorem~\ref{t.gap.intro} match up to a constant factor. In Theorem~\ref{lazy_gap_bd} below, we will also prove a simple lower bound for the discrete time gap, namely
	\begin{equation} \label{e.d2n} \gamma_d \geq \frac{8}{d_*^2 n} \end{equation}
where $d_*$ is the penultimate term in the nondecreasing degree sequence. The reason for the appearance of the second-largest degree is that $\gamma_d$ (unlike $\gamma_c$) does not depend on the choice of sink, which can be moved to the vertex of largest degree.  

The standard bound of the mixing time 
% define here?
in terms of spectral gap is $\tmix = O(\gamma_d^{-1} \log |\mathcal{G}|)$ where $\mathcal{G}$ is the sandpile group. The size of this group---the number of spanning trees of $G$---can be exponential in $n \log d_*$, so the factor of $\log | \mathcal{G} |$ can be significant. The resulting upper bound on the mixing time obtained from \eqref{e.d2n} is often far from the truth. To improve it, our next result gives an upper bound for the $L^2$ mixing time of the sandpile chain in terms of a lattice invariant called the \emph{smoothing parameter}.
Let $\Lambda$ be a lattice in $\R^m$, and let $V = \Span(\Lambda) \subseteq \R^m$. Denote the dual lattice by $\Lambda^\ast = \{x \in V \st \langle x,y \rangle \in \Z \text{ for all } y \in \Lambda \}$. For $s > 0$, the function
\[
f_{\Lambda}(s) = \sum_{x \in \Lambda^\ast \setminus \{\zero\}} e^{-\pi s^2 \|x\|_2^2}
\]
is continuous and strictly decreasing, with a limit of $\infty$ as $s \to 0$ and a limit of $0$ as $s \to \infty$. For $\e > 0$, the smoothing parameter of $\Lambda$ is defined as $\eta_\e(\Lambda) := f_\Lambda^{-1}(\e)$.  

\begin{theorem}
 \label{t.smoothing.intro}
Fix $\e > 0$ and let $\sigma$ be any recurrent state of the sandpile chain. Let $H_\sigma^t$ and 
$P_\sigma^t$ denote the distributions at time $t$ of the continuous and discrete time sandpile chains, respectively, 
started from $\sigma$. The $L^2$ distances from $U$, the uniform distribution on recurrent 
states, satisfy:
\begin{align*}
\|H_{\sigma}^t - U \|_2^2 \leq \e 
\enspace & \text{\em for all }
t \geq \frac{\pi}{16} n \cdot \eta_\e^2(\Delta \Z^{n-1}), \\[5pt]
\|P_{\sigma}^t - U \|_2^2 \leq \e 
\enspace & \text{\em for all } 
t \geq \frac{\pi}{16}n \cdot \eta_\e^2(\overbar{\Delta} \Z^n).
\end{align*}
\end{theorem}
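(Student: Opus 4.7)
My plan is to apply Fourier/Plancherel on the sandpile group, bound each character's contribution by a Gaussian using the per-character version of the eigenvalue estimates behind Theorem \ref{t.gap.intro}, and then dominate the resulting sum over coset representatives by the full theta series of the appropriate dual lattice, which is precisely the function $f_\Lambda$ whose inverse is the smoothing parameter.

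First, since the chain is a random walk on the abelian group $\mathcal{G}$ whose characters are indexed by $\HH$, Plancherel gives
\[
\|H_\sigma^t - U\|_2^2 = \sum_{h\in\HH\setminus\{1\}} e^{-2t(1-\Re\lambda_h)}
\qquad\text{and}\qquad
\|P_\sigma^t - U\|_2^2 = \sum_{h\in\HH\setminus\{1\}} |\lambda_h|^{2t},
\]
both independent of the starting state. Writing $h(v)=e^{2\pi i f(v)}$, the inequality $1-\cos(2\pi x)\geq 8\,\mathrm{dist}(x,\Z)^2$ combined with the eigenvalue formula \eqref{lambda-h} yields, in the continuous case,
\[
1-\Re\lambda_h = \tfrac{1}{n}\sum_{v\ne s}(1-\cos(2\pi f(v))) \geq \tfrac{8}{n}\|y_h\|_2^2,
\]
where $y_h$ is the shortest representative of $[f]\in(\Delta^{-1}\Z^{n-1})/\Z^{n-1}$, obtained by reducing each coordinate modulo $1$. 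In the discrete case the analogous per-character bound $1-|\lambda_h|\geq\tfrac{8}{n}\|y_h\|_2^2$ follows from writing $|\lambda_h|=\tfrac{1}{n}\sum_v\cos(2\pi(f(v)-c))$ with $c=\arg(\lambda_h)/(2\pi)$, applying the same cosine inequality, and then minimizing over both $c\in\R$ and integer translates $k\in\Z^n$; the resulting quadratic minimum is exactly $\|y_h\|_2^2$ for the shortest representative $y_h\in\overbar\Delta^+\Z_0^n$ of the coset in $\overbar\Delta^+\Z_0^n/\W^n$, since $(\Z^n+\R\one)\cap\R_0^n=\W^n$. These are the same per-coset estimates that prove the lower bounds in Theorem \ref{t.gap.intro}.

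Second, using $|z|^{2t}\leq e^{-2t(1-|z|)}$ in the discrete case and $|e^{-t(1-\lambda_h)}|^2=e^{-2t(1-\Re\lambda_h)}$ in the continuous case, each summand is bounded by $e^{-16t\|y_h\|_2^2/n}$. Since distinct cosets have distinct shortest representatives, and since the minimum in any coset is at most the sum over all $L_0$-translates (with $L_0\in\{\Z^{n-1},\W^n\}$),
\[
\sum_{h\ne1}e^{-16t\|y_h\|^2/n}\;\leq\;\sum_{x\in L\setminus L_0}e^{-16t\|x\|^2/n}\;\leq\;\sum_{x\in L\setminus\{0\}}e^{-16t\|x\|^2/n},
\]
with $L\in\{\Delta^{-1}\Z^{n-1},\overbar\Delta^+\Z_0^n\}$. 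Setting $\pi s^2=16t/n$, the last sum is $f_\Lambda(s)$ for $\Lambda\in\{\Delta\Z^{n-1},\overbar\Delta\Z^n\}$, whose dual in the ambient subspace equals $L$ by symmetry of $\Delta$ and $\overbar\Delta$. Choosing $s=\eta_\e(\Lambda)$ then yields $\|\cdot\|_2^2 \leq \e$ at $t=\tfrac{\pi n}{16}\eta_\e^2(\Lambda)$.

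The main obstacle will be the per-character lower bound for $1-|\lambda_h|$: unlike $1-\Re\lambda_h$, the modulus is not itself a cosine sum, so one must first rotate by $e^{-i\arg\lambda_h}$ and then jointly optimize over the continuous phase and integer translates. This optimization is precisely what forces the parameter lattice to be $\overbar\Delta^+\Z_0^n/\W^n$ rather than $\Delta^{-1}\Z^{n-1}/\Z^{n-1}$, aligning the discrete-time step with Theorem \ref{t.gap.intro}; once this is in place the remaining coset-to-lattice domination and identification with $f_\Lambda$ are routine.
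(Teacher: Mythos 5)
Your proposal is correct and follows essentially the same route as the paper's proof (Theorem \ref{t.smoothing.revised}): Plancherel to express $\|H_\sigma^t-U\|_2^2$ and $\|P_\sigma^t-U\|_2^2$ as sums over nontrivial characters, the per-character lower bounds $1-\Re(\lambda_h)\geq\frac{8}{n}\|y_h\|_2^2$ and $1-|\lambda_h|\geq\frac{8}{n}\|y_h\|_2^2$ via the cosine inequality of Lemma \ref{cos-upper-bound} (which the paper records as Theorems \ref{dual-length-1} and \ref{dual-length-2}), the observation that distinct cosets have distinct minimal representatives so the sum over $\HH\setminus\{1\}$ is dominated by the full theta series over $\Delta^{-1}\Z^{n-1}\setminus\{\zero\}$ (resp. $\overbar\Delta^+\Z_0^n\setminus\{\zero\}$), and finally the identification of this theta series with $f_\Lambda$ at $s^2=16t/(\pi n)$. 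The only organizational difference is that you inline the dual-lattice length estimates rather than invoking them as separate theorems, and your description of the joint optimization over the rotation parameter $c$ and integer translates $k$ in the discrete-time case correctly captures the extra step that the paper handles by projecting onto $\R_0^n$.
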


The smoothing parameter $\eta_\e$ can be bounded in terms 
of $n$ and $d_\ast$, allowing us to show in Theorem \ref{smoothing-mixing-time} that every sandpile chain has mixing time of order at most $d_\ast^2 n \log n$. In the 
case of the complete graph $K_n$, we use eigenfunctions to provide a matching lower bound. The upper and lower bounds taken together demonstrate that the sandpile chain on $K_n$ exhibits total variation cutoff.

\begin{theorem}
\label{t.cutoff.intro}
Let $P_{\sigma}^{t}$ be the distribution of the discrete time sandpile chain on $K_{n}$ after $t$ steps 
started at a fixed recurrent state $\sigma$, and let $U$ be the uniform distribution on recurrent states. 
For any $c \geq 5/4$,
\begin{align*}
\| P_{\sigma}^{t}-U \|_{\TV} \leq e^{-c} 
\enspace & \text{\em for all }
t\geq\frac{1}{4\pi^{2}}n^{3}\log n + cn^{3}, \\[5pt]
\| P_{\sigma}^{t}-U\| _{\TV} \geq 1-e^{-35c}
\enspace & \text{\em for all } 
0 \leq t\leq\frac{1}{4\pi^{2}}n^{3}\log n - cn^{3}.
\end{align*}
\end{theorem}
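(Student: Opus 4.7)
My plan splits naturally into an upper and lower bound, both exploiting the explicit eigenvalue description of the chain on $K_n$.

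\textbf{Upper bound.} I will bound the total variation by the $L^2$ distance,
\[
4\|P_\sigma^t - U\|_{\TV}^2 \;\leq\; \sum_{h \in \HH,\, h \not\equiv 1} |\lambda_h|^{2t},
\]
and estimate the eigenvalue sum directly. On $K_n$ the multiplicative harmonic functions take the form $h(v_j)=\omega^{k_j}$ with $\omega := e^{2\pi i/n}$, where $(k_1,\ldots,k_{n-1}) \in \{0,\ldots,n-1\}^{n-1}$ satisfies $\sum_j k_j \equiv 0 \pmod n$ and $h(v_n)=1$. Applying the polarization identity $|\sum z_j|^2 = n^2 - \sum_{i<j}|z_i-z_j|^2$ to $z_j=\omega^{k_j}$ yields the exact formula
\[
|\lambda_h|^2 = 1 - \frac{4}{n^2} \sum_{1 \leq i < j \leq n} \sin^2\!\Big(\tfrac{\pi(k_i - k_j)}{n}\Big).
\]
The dominant contribution comes from the $(n-1)(n-2)$ \emph{pair characters} in which exactly one $k_i = 1$, one $k_j = -1$, and the rest vanish; all share the eigenvalue $\lambda_\ast := 1 - 4\sin^2(\pi/n)/n$. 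At the target $t = n^3\log n/(4\pi^2) + cn^3$ one has $\lambda_\ast^{2t} \sim n^{-2} e^{-8\pi^2 c}$, giving a pair contribution of order $e^{-8\pi^2 c}$. The remaining characters are controlled by the pointwise bound $|\lambda_h|^{2t} \leq \exp(-\tfrac{4\pi^2 t}{n^3}\|k\|_2^2)$ (using $\sin x \geq (2/\pi)x$), producing a theta-function tail dominated by the pair term once $c \geq 5/4$.

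\textbf{Lower bound.} Since the chain is a random walk on the abelian sandpile group, $\|P_\sigma^t - U\|_{\TV}$ is independent of the starting state $\sigma$; I take $\sigma$ to be the group identity, for which $\chi_h(\sigma)=1$ for every character. I will apply Wilson's method to the real-valued eigenfunction
\[
F(\tau) := \Big|\sum_{j=1}^{n-1}\omega^{\tau(v_j)}\Big|^2 - (n-1)
= \sum_{\substack{i,j \in \{1,\ldots,n-1\}\\ i \neq j}} \omega^{\tau(v_i) - \tau(v_j)},
\]
i.e.\ the sum of all pair characters, which is itself an eigenfunction with eigenvalue $\lambda_\ast$. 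By orthogonality $\EE_U F = 0$ and $\EE_U F^2 = (n-1)(n-2)$, while $F(\sigma) = (n-1)(n-2)$ at the identity. The local identity
\[
F(\tau + e_{v_k}) - F(\tau) = 2\Re[(\omega - 1)A_k(\tau)],
\quad A_k(\tau) := \sum_{j \neq k}\omega^{\tau(v_k) - \tau(v_j)},
\]
together with the telescoping computation $\sum_k |A_k(\tau)|^2 = (n-3)F(\tau) + (n-1)(n-2) = O(n^3)$ uniformly in $\tau$, gives $R := \sup_\tau \EE_\tau[(F(\tau_1) - \lambda_\ast F(\tau))^2] = O(1)$: the factor $|\omega - 1|^2 \sim 4\pi^2/n^2$ absorbs the $O(n^3)$ size of $\sum_k |A_k|^2$. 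At the critical time $t = n^3\log n/(4\pi^2) - cn^3$ the mean $\EE_\sigma F(\tau_t) = \lambda_\ast^t F(\sigma) \sim n e^{4\pi^2 c}$ exceeds the stationary standard deviation $\sim n$ by a factor $e^{4\pi^2 c}$; a second-moment computation (showing that the disjoint-support contributions to $\EE_\sigma F(\tau_t)^2$ almost exactly cancel $(\EE_\sigma F(\tau_t))^2$) confirms that $\Var_\sigma F(\tau_t) \sim n^2$ at that time. Chebyshev's inequality then yields a TV lower bound of the form $1 - O(e^{-8\pi^2 c})$, which absorbs into $1 - e^{-35c}$ for $c \geq 5/4$.

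\textbf{Main obstacle.} The delicate point is preserving the sharp prefactor $1/(4\pi^2)$ in both directions. A black-box application of Theorem~\ref{t.smoothing.intro} yields only prefactor $1/16$, and the naive Wilson bound $\Var \leq R/(1-\lambda_\ast^2) = O(n^3)$ is too loose by a factor of $n$; both sharpenings require explicit calculations with the $K_n$ eigenvalues, most notably the near-cancellation of $\EE_\sigma F(\tau_t)^2$ with $(\EE_\sigma F(\tau_t))^2$ that controls the variance at the critical time.
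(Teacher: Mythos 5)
Your lower bound matches the paper's strategy closely: the distinguishing statistic $F$ is exactly $(n-1)(n-2)\varphi$ where $\varphi$ is the paper's averaged pair eigenfunction, and the key step you identify---showing that $\Var_{\sigma}F(\tau_t) = \Theta(n^2)$ rather than $O(n^3)$ via a direct second-moment computation---is precisely what the paper does by expanding $(n-1)^2(n-2)^2\varphi^2$ in the eigenbasis (Table~\ref{table.eigen}). Your $R = O(1)$ Wilson-type preliminary is a correct but ultimately unused detour, since (as you note) $R/(1-\lambda_\ast^2) = O(n^3)$ is too weak. One arithmetic slip: Chebyshev here delivers $1 - O(e^{-4\pi^2 c})$, not $1 - O(e^{-8\pi^2 c})$, since the variance at the critical time is $\sim n^2 e^{4\pi^2 c}$ rather than $\sim n^2$; this still gives $1-e^{-35c}$ because $4\pi^2 > 35$, but the exponent you quoted is off by a factor of two.

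The upper bound has a genuine gap. The pointwise bound $|\lambda_h|^{2t} \leq \exp\left(-\tfrac{4\pi^2 t}{n^3}\|k\|_2^2\right)$ attributed to $\sin x \geq (2/\pi)x$ is false. Starting from your (correct) polarization identity and using $\sin x \geq (2/\pi)x$ together with $\sum_{i<j}(k_i - k_j)^2 = n\|k\|_2^2$ (for a zero-sum representative of $k$) gives the constant $16$, not $4\pi^2 \approx 39.5$:
\[
|\lambda_h|^{2t} \;\leq\; \exp\!\left(-\tfrac{16t}{n^3}\|k\|_2^2\right).
\]
This is exactly the ``black-box'' prefactor $1/16$ that you yourself flag as insufficient. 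With the honest constant $16$, at $t = \tfrac{1}{4\pi^2}n^3\log n + cn^3$ the exponent on $n$ is $(4/\pi^2)\|k\|_2^2 < \|k\|_2^2$, so the $\sim n^2$ pair characters ($\|k\|_2^2 = 2$) contribute $\sim n^{2 - 8/\pi^2}e^{-32c} = n^{1.19}e^{-32c}$, which \emph{diverges} in $n$. Your ``theta-function tail dominated by the pair term'' conclusion therefore does not follow. What's missing is the sharper cosine estimate for near-constant $h$---precisely the two-part decomposition $\HH = \HH_1 \sqcup \HH_2$ that the paper carries out in Theorem~\ref{smoothing-optimized} to improve the leading coefficient from $1/16$ to $1/(4\pi^2)$, paying a secondary $n^2\log d_\ast$ term for the few $h$'s where the cruder bound must be used. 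You acknowledge in your obstacle paragraph that some sharpening is needed, but the bound you actually wrote down silently asserts that the sharpening has already been done, and it has not. (A secondary issue: the representatives $k_j$ must be chosen so that both $\sum_j k_j = 0$ in $\Z$ and $|k_i - k_j| \leq n/2$ for all pairs; these constraints can conflict, which is why the paper works in the quotient lattice $\overbar{\Delta}^+\Z_0^n / \W^n$ rather than directly with integer representatives.)
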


While the lower bound is specific to $K_n$, the upper bound holds for \emph{any} graph with $n$ vertices. Therefore, as the underlying graph varies with a fixed number of vertices, the sandpile chain on the complete graph mixes asymptotically slowest: For any graph sequence $G_n$ such that $G_n$ has $n$ vertices,
	\[ \limsup_{n \to \infty} \frac{\tmix(G_n)}{\tmix(K_n)} \leq 1. \]

\subsection{An inverse relationship for spectral gaps}

Given the central role of the graph Laplacian $\overbar{\Delta}$ in Theorems~\ref{t.gap.intro} and~\ref{t.smoothing.intro}, one might ask
how the eigenvalues $\{ \lambda_h \,:\, h \in \mathcal{H}\}$ of the sandpile chain relate to the eigenvalues of the Laplacian itself.  In particular, how does mixing of the sandpile chain on $G$ relate to mixing of the simple random walk on $G$?  The sandpile chain typically has an exponentially larger state space, so there is not necessarily a simple relationship.  Indeed, certain local features of $G$ that have little effect on the Laplacian spectrum can decrease the spectral gap of the sandpile chain. An example is the existence of two vertices $x,y$ with a common neighborhood, which enables the multiplicative harmonic function
	 \[ h(x) = e^{2\pi i / d}, \quad h(y) = e^{-2\pi i/d}, \quad h(z) = 1 \text{ for }z \neq x,y, \]
where $d = \deg(x) = \deg(y)$. The corresponding eigenvalue $\lambda_h$ is close to $1$, so the spectral gap of the sandpile chain is rather small whenever two such vertices exist.

Nevertheless there is a curious inverse relationship between the spectral gap of the sandpile chain and the spectral gap of the Laplacian.  According to our next result, if the simple random walk on $G$ mixes sufficiently quickly, then the sandpile chain on $G$ mixes \emph{slowly}!

\begin{theorem} 
\label{t.inverse.intro}
The spectral gap of the discrete time sandpile chain satisfies
\[
\gamma_d \leq \frac{4 \pi^2}{\beta_1^2 n}
\]
where $0 = \beta_0 < \beta_1 \leq \cdots \leq \beta_{n-1}$ are the eigenvalues of the full Laplacian $\overbar{\Delta}$. 
\end{theorem}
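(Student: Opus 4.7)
The plan is to invoke Theorem~\ref{t.gap.intro} directly. That theorem already gives $\gamma_d \leq 2\pi^2 \|z\|_2^2 / n$ for $z$ of minimal Euclidean length in $(\overbar{\Delta}^+ \Z_0^n) \setminus \W^n$, and the same inequality obviously holds for any element of that set (since the minimum-length vector is no longer than any other element). Hence it suffices to exhibit some vector in $(\overbar{\Delta}^+ \Z_0^n) \setminus \W^n$ whose squared length is at most $2/\beta_1^2$, which would immediately give $\gamma_d \leq 2\pi^2 \cdot (2/\beta_1^2)/n = 4\pi^2/(\beta_1^2 n)$.

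The operator estimate I need is routine. The matrix $\overbar{\Delta}$ is symmetric and positive semidefinite with kernel $\R \mathbf{1}$ and eigenvalues $\beta_1 \leq \cdots \leq \beta_{n-1}$ on $\R_0^n$; therefore $\overbar{\Delta}^+$, restricted to $\R_0^n$, is symmetric with eigenvalues $1/\beta_{n-1} \leq \cdots \leq 1/\beta_1$ and operator norm $1/\beta_1$. Since $e_i - e_j \in \R_0^n$ has length $\sqrt{2}$, this gives
\[
\|\overbar{\Delta}^+(e_i - e_j)\|_2 \,\leq\, \frac{\sqrt{2}}{\beta_1} \qquad \text{for all } i \neq j.
\]
So every such vector already lies within the length budget; the only remaining task is to exhibit one that is not in $\W^n$.

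That last step is a small group-theoretic observation. The lattice $\Z_0^n$ is generated as an abelian group by the vectors $\{e_i - e_j\}_{i \neq j}$, and $\W^n$ is a subgroup of $\R_0^n$ (being the image of $\Z^n$ under an additive projection). If every $\overbar{\Delta}^+(e_i - e_j)$ were to lie in $\W^n$, then by linearity of $\overbar{\Delta}^+$ the full image $\overbar{\Delta}^+ \Z_0^n$ would be contained in $\W^n$, making $(\overbar{\Delta}^+ \Z_0^n) \setminus \W^n$ empty; this is precisely the degenerate case of a trivial sandpile group (e.g.\ $G$ a tree), in which $\gamma_d$ is vacuous and there is nothing to prove. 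Otherwise some pair $(i,j)$ produces a vector $z := \overbar{\Delta}^+(e_i - e_j) \in (\overbar{\Delta}^+ \Z_0^n) \setminus \W^n$ with $\|z\|_2^2 \leq 2/\beta_1^2$, and applying Theorem~\ref{t.gap.intro} to this particular $z$ yields the claim. There is no significant obstacle in this argument; the work was done in establishing Theorem~\ref{t.gap.intro}, and the present result is essentially a corollary obtained by plugging in a single convenient short vector and using the standard spectral bound on $\overbar{\Delta}^+$.
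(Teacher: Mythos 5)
Your proposal is correct and follows essentially the same path as the paper's own proof: both reduce to the second part of Theorem~\ref{t.gap.intro} (Corollary~\ref{spec-gap-2}), bound $\|\overbar{\Delta}^+(e_i - e_j)\|_2$ via the operator norm $1/\beta_1$ of $\overbar{\Delta}^+$ on $\R_0^n$, and use the generating set $\{e_i - e_j\}$ of $\Z_0^n$ to guarantee some image lies outside $\W^n$ in the nondegenerate case. No meaningful difference from the paper's argument.
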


\noindent For a bounded degree expander graph, this upper bound on $\gamma_d$ matches the lower bound \eqref{e.d2n} up to a constant factor.

\subsection{Related work}

We were drawn to the topic of sandpile mixing times in part by its intrinsic mathematical interest and in part by questions arising in statistical physics. Many questions about the sandpile model, even if they appear unrelated to mixing, seem to lead inevitably to the study of its mixing time. For instance, the failure of the `density conjecture' \cite{FLW,JJ,PPPR} and the distinction between the stationary and threshold states \cite{L15} are consequences of slow mixing.  Our characterization of the eigenvalues of the sandpile chain may also help explain the recent finding of `memory on two time scales' for the sandpile dynamics on the $2$-dimensional square grid \cite{SMKW15}.

The significance of the characters of $\mathcal{G}$ for the sandpile model was first remarked by Dhar, Ruelle, Sen and Verma \cite{DRSV95}, who in particular analyzed the sandpile group of the square grid graph with sink at the boundary. Our multiplicative harmonic functions $h$ are related to the toppling invariants $Q$ of \cite{DRSV95} by $h = e^{2\pi i Q}$.

In the combinatorics literature the abelian sandpile model is known as `chip-firing' \cite{BLS91}. 
For an alternative construction of the sandpile group, using flows on the edges instead of functions on the vertices, see \cite{BHN97}; also \cite{B99} and \cite[Ch.\ 14]{GR01}. 
The geometry of the Laplacian lattice $\overbar \Delta \Z^n$ is studied in \cite{AM10,PPW11}
in connection with the combinatorial Riemann-Roch theorem of Baker and Norine \cite{BN07}. We do not know of a direct connection between that theorem and sandpile mixing times, but the central role of the Laplacian lattice in both is suggestive.

The pseudoinverse $\overbar{\Delta}^+$ has appeared before in the context of sandpiles: It is used by Bj\"{o}rner, Lov\'{a}sz and Shor \cite{BLS91} to bound the number of topplings until a configuration stabilizes; see \cite{G14} for a recent improvement. In addition, the pseudoinverse is a crucial ingredient in the `energy pairing' of Baker and Shokrieh~\cite{BS13}.

The smoothing parameter was introduced by Micciancio and Regev \cite{MR07} in the context of lattice-based cryptography. Our interest lies in results that relate the smoothing parameter to other lattice invariants, many of which have natural interpretations in the setting of the sandpile chain. Relationships of this kind have been found by \cite{P08, GPV08, WTW14}.

\subsection{Outline}

After formally defining the sandpile chain and recalling how it can be expressed as a random walk on the sandpile group, Section \ref{The Sandpile Chain} applies the classical eigenvalue formulas and mixing bounds for random walk on a group, giving the characterization \eqref{lambda-h} of eigenvalues in terms of multiplicative harmonic functions. Although the sandpile chain is nonreversible in general, the usual mixing bounds hold thanks to the orthogonality of the characters of the sandpile group. Our first application of \eqref{lambda-h} is the lower bound \eqref{e.d2n} on the spectral gap $\gamma_d$. Section \ref{The Sandpile Chain} concludes by showing how one can sometimes exploit local features of $G$, which we call `gadgets,' to infer an upper bound on $\gamma_d$. In many cases, this upper bound matches the lower bound from \eqref{e.d2n} up to a constant factor.

The next two sections are the heart of the paper. Section \ref{Dual Lattices} proves a correspondence between multiplicative harmonic functions and equivalence classes of vectors in the dual Laplacian lattices $\Delta^{-1} \Z^{n-1}$ and $\overbar{\Delta}^+ \Z_0^n$, leading to Theorems~\ref{t.gap.intro} and~\ref{t.inverse.intro}.
Section~\ref{The Smoothing Parameter} proves Theorem~\ref{t.smoothing.intro} and thereby obtains sharp bounds on mixing time in terms of the number of vertices and maximum degree of $G$.

The final Section \ref{Examples} treats several examples: cycles, complete and complete bipartite graphs, the torus, and `rooted sums.' For the complete graph, we show a lower bound on mixing time that matches the upper bound from Section \ref{The Smoothing Parameter}, proving Theorem \ref{t.cutoff.intro}. The examples are collected in one place to improve the flow of the paper, but some readers will want to look at them before or in parallel to reading the proofs in Sections \ref{Dual Lattices} and \ref{The Smoothing Parameter}.

\subsection{Notation}

Throughout the paper, $\N = \{0,1,2,\ldots\}$ and $\log$ is the natural logarithm. We denote the usual inner product and Euclidean norm on $\R^n$ by $\langle \cdot, \cdot \rangle$ and $\| \cdot \|_2$. We use standard Landau notation: $f(n)=o(g(n))$ if $\lim_{n\to\infty}f(n)/g(n)=0$; $f(n)=O(g(n))$ if $f(n)\leq Cg(n)$ for some $C < \infty$ and all $n$; $f(n)=\Omega(g(n))$ if $f(n)\geq cg(n)$ for some $c > 0$ and all $n$; and $f(n)=\Theta(g(n))$ if $f(n)=O(g(n))$ and $f(n)=\Omega(g(n))$. At times we write $f(n) \ll g(n)$ to mean $f(n) = o(g(n))$.

\section{The Sandpile Chain}
\label{The Sandpile Chain}

We begin by formally defining the sandpile chain.
Let $G=(V,E)$ be a simple connected graph with finite vertex set $V=\{v_{1},\ldots,v_{n}\}$. 
We call $v_{n}$  the \emph{sink} and write $s=v_{n}$. 
A sandpile is a collection of indistinguishable \emph{chips} distributed amongst the non-sink vertices $\V=V\setminus\{s\}$, 
and thus can be represented by a function $\eta:\V\to\N$. The \emph{configuration} $\eta$ corresponds to the sandpile 
with $\eta(v)$ chips at vertex $v$. We say that $\eta$ is \emph{stable at the vertex $v\in\V$} if $\eta(v)<\deg(v)$, 
and say that $\eta$ is \emph{stable} if it is stable at each non-sink vertex. If the number of chips at $v$ is greater than or equal to its degree, then the vertex is allowed to \emph{topple}, sending one chip to each of its neighbors. This leads to the new configuration 
$\eta'$ with $\eta'(v)=\eta(v)-\deg(v)$ and $\eta'(u)=\eta(u)+1$ if $\{u,v\}\in E$, and $\eta'(u)=\eta(u)$ otherwise. 
Toppling $v$ may cause other vertices to become unstable, which can lead to further topplings, and any chip that falls into the sink is gone forever. 
Since we are assuming that $G$ is connected, the presence of the sink ensures that one can reach a stable sandpile from any initial configuration 
(in finitely many steps) by successively performing topplings at unstable sites. An easy argument shows that the final stable configuration, which we denote by 
$\eta^{\circ}$, does not depend on the order in which the topplings are carried out, hence the appelation abelian sandpile \cite{Dha}.

Define the sum of two configurations by $(\sigma+\eta)(v)=\sigma(v)+\eta(v)$. The abelian property shows that if we restrict our attention to the set of stable configurations 
$\mathcal{S}=\{\eta\in\N^{\V}:\eta(v)<\deg(v)\textrm{ for all }v\in\V\}$,
then the operation of addition followed by stabilization, $\eta\oplus\sigma=(\eta+\sigma)^{\circ}$, makes $\mathcal{S}$ into a commutative monoid. 
The identity is the empty configuration $\iota\equiv 0$. In light of this semigroup structure, it is natural to consider random walks on $\mathcal{S}$: 
If $\mu$ is a probability on $\mathcal{S}$, then beginning with some initial state $\eta_{0}$, define $\eta_{t+1}=\eta_{t}\oplus\sigma_{t+1}$ where $\sigma_{1},\sigma_{2},\ldots$ are drawn independently from $\mu$. A natural candidate for $\mu$ is the uniform distribution on the configurations $\delta_{v}(u)=1\{u=v\}$ as $v$ ranges over $V$. (Note that $\delta_{s}=\iota$.) In words, at each time step we add a chip to a random vertex and stabilize.

Now define the \emph{saturated configuration} by $\eta_{\ast}(v)=\deg(v)-1$ for each $v\in\V$. Our assumptions ensure that from any initial state, the random walk on $\mathcal{S}$ driven by the uniform distribution on $\{\delta_{v}\}_{v\in V}$ will visit $\eta_{\ast}$ in finitely many steps with full probability. (If $M=\sum_{v\in\V}\left(\deg(v)-1\right)$, then the probability of visiting $\eta_{\ast}$ within $M$ steps from any stable configuration is at least $n^{-M}$.) The random walk will thus eventually be absorbed by the communicating class 
$\G=\{\eta\in\mathcal{S}:\eta=\eta_{\ast}\oplus\sigma\textrm{ for some }\sigma\in\mathcal{S}\}$. 
Accordingly, the configurations in $\G$ are called \emph{recurrent}.
In the language of semigroups, $\G=\eta_{\ast}\oplus\mathcal{S}$ is the minimal ideal of $\mathcal{S}$. To see that this is so, suppose that $\mathcal{I}$ is an ideal of $\mathcal{S}$ (that is, $\mathcal{I} \oplus \mathcal{S} \subseteq \mathcal{I}$) and let $\sigma\in\mathcal{I}$. Define $\overline{\sigma}(v)=\deg(v)-1-\sigma(v)$. 
Then $\overline{\sigma}\in\mathcal{S}$, so 
$\eta_{\ast}=\sigma+\overline{\sigma}=\sigma\oplus\overline{\sigma}\in\mathcal{I}\oplus\mathcal{S}\subseteq\mathcal{I}$,
hence $\G\subseteq\mathcal{I}$. As the minimal ideal of a commutative semigroup, $\G$ is a nonempty abelian group under $\oplus$. 
(Briefly, for any $a\in\G$, $\G\subseteq a\oplus\G\subseteq \G$ since $\G$ is the minimal ideal, hence $a\oplus x=b$ has a solution in $\G$ for all $a,b\in\G$.) The article \cite{BabTou} contains an excellent exposition of this perspective. 

Because the random walk will eventually end up in the \emph{sandpile group} $\G$ anyway, it makes sense to restrict the state space to $\G$ to begin with. Rather than thinking of this Markov chain in terms of $\mathcal{S}$ acting on $\G$, it is more convenient to consider it as
a random walk on $\G$ so that we may draw on a rich existing theory. 
To this end, let $id$ denote the identity in $\G$ (which is not equal to $\iota$ in general). 
Then for each $v\in V$, $\sigma_{v}:=\delta_{v}\oplus id\in\G$ since $\G$
is an ideal of $\mathcal{S}$ containing $id$. 
Also, $\sigma_{v}\oplus\eta=\delta_{v}\oplus id\oplus\eta=\delta_{v}\oplus\eta$ for all $\eta\in\G$. 
The process of successively adding chips to random vertices and stabilizing can thus be represented as the random walk on $\G$ driven by the uniform distribution on 
$S=\{\sigma_{v}\}_{v\in V}$.\footnote{More precisely, the pushforward of the uniform distribution on $V$ under the map $v \mapsto \sigma_v$. Lemma 4.6 in \cite{BN07} implies that the $\sigma_{v}$'s are distinct as elements of $\G$ if and only if $G$ is 2-edge connected.}
Since $\G$ is an abelian group generated by $S$, we can conclude, for example, that the chain is irreducible with uniform stationary distribution and that the characters of $\G$ form an orthonormal basis of eigenfunctions for the transition operator \cite{SalCo}. 

Though the foregoing is all very nice in theory, even computing the identity element of the sandpile group of a specific graph in these abstract terms is typically quite involved, so it is useful to establish a more concrete realization of $\G$. Recall that the reduced Laplacian $\Delta$ of $G$ is the $(n-1) \times (n-1)$ submatrix of the full Laplacian $\overbar{\Delta}$ formed by deleting the $n$th row and column (corresponding to the sink vertex). We claim that 
	\[ \G\cong\Z^{n-1}/\Delta\Z^{n-1}, \] 
an isomorphism being given by $\eta\mapsto (\eta(v_{1}),\ldots,\eta(v_{n-1}))+\Delta\Z^{n-1}$.
Note that this implies that $\left|\G\right|=\textrm{det}(\Delta)$, which is equal to the number of spanning trees in $G$ by the matrix-tree theorem \cite{Stan}.
The interpretation is that $z\in\Z^{n-1}$ corresponds to the configuration having $z_{i}$ chips at vertex $v_{i}$, where we are allowing vertices to have a negative number of chips---a hole or a debt, say. For $x\in\Z^{n-1}$, adding $\Delta x$ to $z$ corresponds to performing $-x_{i}$ topplings at each vertex $v_{i}$. (A negative toppling means that the vertex takes one chip from each of its neighbors.) 
Isomorphism is established by showing that each coset contains exactly one vector corresponding to a recurrent configuration. One way to see this is to observe 
that the lattice $\Delta\Z^{n-1}$ contains points with arbitrarily large smallest coordinate, so from any $z\in\Z^{n-1}$, there is an 
$x\in\Delta\Z^{n-1}$ with $(x+z)_{i}\geq\deg(v_{i})$ for all $1\leq i\leq n-1$. Then use the fact that stabilizing such a configuration amounts to adding some 
$y\in\Delta\Z^{n-1}$ and results in a unique recurrent configuration. We refer the reader to \cite{HLMPPW} for a more detailed account.

In this view, the sandpile chain is a random walk on $\Gamma:=\Z^{n-1}/\Delta\Z^{n-1}$ driven by the uniform distribution on $\{e_1, \ldots, e_{n-1},\zero\}$, where $e_{i}$ is the vector with a one in the $i$th coordinate and zeros elsewhere. Geometrically, we are performing a random walk on the positive orthant of $\Z^{n-1}$ by taking steps of unit length in a direction chosen uniformly at random from the standard basis vectors of $\Z^{n-1}$ (with a holding probability of $1/n$), but we are concerned only with our relative location within cells of the lattice $\Delta\Z^{n-1}$.

\subsection{Spectral properties}
\label{Spectral properties}
 
Since the sandpile chain is a random walk on a finite abelian group $\Gamma$, we can find the eigenvalues and eigenfunctions of the associated transition matrix in terms of the \emph{characters} of $\Gamma$, that is, elements of the dual group
	\[ \widehat{\Gamma} := \Hom(\Gamma,\T) \]
where $\T$ is the set of complex numbers of modulus $1$. We emphasize that the sandpile chain is \emph{not} reversible in general, as our generating set $\{e_1, \ldots, e_{n-1},\zero\}$ is generally not closed under negation modulo $\Delta\Z^{n-1}$. Nevertheless we will see that the usual bounds on mixing still hold due to orthogonality of the characters.

Our starting point is the following well-known lemma, which is particularly simple in our case thanks to the fact that all irreducible representations of an abelian group are one-dimensional. See \cite{Dia} for the general (nonabelian) case. 

\begin{lemma}
\label{ab_gp_eigvals}
If $\mu$ is a probability on a finite abelian group $A$, then the transition matrix for the associated random walk has an orthonormal basis of eigenfunctions consisting of the characters of $A$. The eigenvalue corresponding to a character $\chi$ is given by the evaluation of the Fourier transform 
\[
\widehat{\mu}(\chi) := \sum_{a\in A}\mu(a)\chi(a).
\]
\end{lemma}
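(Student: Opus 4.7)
The plan is to verify directly that every character of $A$ is an eigenfunction of the transition operator, read off the eigenvalue, and then invoke the standard orthogonality of characters to conclude that they form an orthonormal basis.

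First I would write the transition operator explicitly. Since the walk proceeds by $X_{t+1} = X_t + \sigma_{t+1}$ with $\sigma_{t+1}\sim\mu$, the operator $P$ acts on complex-valued functions on $A$ by $(Pf)(a) = \sum_{b \in A}\mu(b)\,f(a+b)$. For a character $\chi \in \widehat{A}$, the multiplicativity $\chi(a+b) = \chi(a)\chi(b)$ gives $(P\chi)(a) = \chi(a)\sum_{b\in A}\mu(b)\chi(b) = \widehat{\mu}(\chi)\,\chi(a)$, identifying $\chi$ as an eigenfunction of $P$ with eigenvalue $\widehat{\mu}(\chi)$. This is a one-line calculation; the substance of the lemma lies in the orthonormal basis claim.

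For that, I would equip $\C^A$ with the normalized inner product $\langle f,g\rangle = \frac{1}{|A|}\sum_{a \in A} f(a)\overline{g(a)}$, in which each character has unit norm because $|\chi(a)|=1$. Orthogonality of distinct characters reduces to the fact that a nontrivial character $\psi$ sums to zero over $A$, a consequence of translation invariance: choosing $a_0$ with $\psi(a_0) \neq 1$, we get $\sum_a \psi(a) = \sum_a \psi(a+a_0) = \psi(a_0)\sum_a \psi(a)$, forcing the sum to vanish. Applying this to $\psi=\chi\overline{\chi'}\in\widehat{A}$ for distinct $\chi,\chi'$ yields $\langle\chi,\chi'\rangle=0$, and since $|\widehat{A}| = |A|$ for a finite abelian group, the characters exhaust an orthonormal basis of $\C^A$.

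The only subtlety worth flagging is that $P$ need not be self-adjoint in general, since $\mu$ need not be symmetric. Thus the existence of a full orthonormal eigenbasis is not automatic from the spectral theorem; it is really a consequence of $P$ being a convolution operator on an abelian group, which is simultaneously diagonalized by the Fourier basis regardless of any reversibility hypothesis. No step here presents a genuine obstacle, but this nonreversibility is precisely the point that the authors flag just before stating the lemma, and it is what makes the Fourier-analytic viewpoint so convenient in the sandpile setting.
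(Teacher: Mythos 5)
Your proof is correct and follows essentially the same route as the paper: a direct computation showing each character is an eigenfunction of the convolution operator with eigenvalue $\widehat{\mu}(\chi)$, followed by the standard orthonormality of the $|A|$ characters under the normalized inner product. The only difference is cosmetic — you spell out the translation-invariance proof of character orthogonality and explicitly flag the nonreversibility point, both of which the paper leaves implicit.
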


\begin{proof}
Let $Q(x,y)=\mu(yx^{-1})$ denote the transition matrix. For any character $\chi$, we have 
\begin{align*} 
(Q\chi)(x) & =\sum_{y\in A}Q(x,y)\chi(y)=\sum_{y\in A}\mu(yx^{-1})\chi(y)\\
 & =\sum_{z\in A}\mu(z)\chi(zx)=\chi(x)\sum_{z\in A}\mu(z)\chi(z),
\end{align*}
hence $\chi$ is an eigenfunction with eigenvalue $\widehat{\mu}(\chi)$. The result follows by observing that there are $|A|$ characters and they are orthonormal with respect to the standard inner product 
$\left(f,g\right)=\frac{1}{|A|}\sum_{a\in A}f(a)\overline{g(a)}$ on $\C^{A}$. 
\end{proof}

To apply the preceding lemma, we are going to express the characters of the abstract group $\Gamma$ in terms of functions defined on the vertices of our graph $G$.
These are the \emph{multiplicative harmonic functions}
	\[ h : V \to \T \]
satisfying $h(s)=1$ and
	\begin{equation} \label{e.geomean} h(v)^{\deg(v)} = \prod_{u\sim v}h(u) \end{equation}
for all $v \in V$.  We will refer to \eqref{e.geomean} as the `geometric mean value property' at vertex $v$. We pause to make two remarks about this definition.

\begin{remark}
\label{codomain}
We could equivalently take the codomain of $h$ to be all nonzero complex numbers. The geometric mean value property implies $\overbar \Delta (\log |h|) \equiv 0$, so $\log |h|$ is a constant function. Since $h(s)=1$ it follows that $h$ takes values on the unit circle $\T$.
\end{remark}

\begin{remark}
\label{GMVP-all-but-one}
The identity 
	\begin{equation} \label{e.eulerian} \prod_{v \in V} h(v)^{\deg(v)}  = \prod_{v \in V} \prod_{u \sim v} h(u) \end{equation}
holds for \emph{any} function $h$. Therefore if the geometric mean value property holds at every vertex but one, then in fact it holds at every vertex.
\end{remark}

Denote by $\HH$ the set of all multiplicative harmonic functions on $G$. This set is nonempty since it contains the constant function $1$. One readily checks that it is an abelian group under pointwise multiplication.

For each $h\in\HH$, define $\chi_{h}':\Z^{n-1}\to\T$ by 
\begin{equation}
\label{chi-h-prime}
\chi_{h}'(z)=\prod_{j=1}^{n-1}h(v_{j})^{z_{j}},
\end{equation}
and define $\chi_{h}:\Z^{n-1}/\Delta\Z^{n-1}\to\T$ by 
$\chi_{h}(z+\Delta\Z^{n-1})=\chi_{h}'(z)$. 
The ensuing proof shows that $\chi_{h}$ is well-defined.

\begin{lemma}
\label{irr_chars}
The characters of $\Gamma$ are precisely $\{\chi_{h}\}_{h\in\HH}$.
\end{lemma}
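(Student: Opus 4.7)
The plan is to exhibit the two constructions $h \mapsto \chi_h$ and $\chi \mapsto h$ as inverse bijections between $\HH$ and $\widehat{\Gamma}$. For the first direction, the function $\chi_h'$ defined by \eqref{chi-h-prime} is manifestly a group homomorphism $\Z^{n-1} \to \T$, so it descends to a character of $\Gamma$ as soon as one verifies that $\chi_h' \equiv 1$ on the sublattice $\Delta\Z^{n-1}$. By linearity in the exponent, it suffices to show $\chi_h'(\Delta e_i) = 1$ for each $1 \leq i \leq n-1$. Reading off the $i$th column of the reduced Laplacian gives
\[
\chi_h'(\Delta e_i) = h(v_i)^{\deg(v_i)} \prod_{\substack{1 \leq j \leq n-1 \\ v_j \sim v_i}} h(v_j)^{-1},
\]
and the geometric mean value property \eqref{e.geomean} at $v_i$ rearranges to exactly this identity, the only subtlety being the possible missing factor $h(s)$ when $v_i$ is adjacent to the sink; that factor equals $1$ by hypothesis. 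Hence $\chi_h$ is a well-defined character.

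Conversely, given $\chi \in \widehat{\Gamma}$, set $h(v_i) := \chi(e_i + \Delta\Z^{n-1})$ for $i < n$ and $h(s) := 1$. Reading the previous display in reverse, the fact that $\Delta e_i \in \Delta\Z^{n-1}$ forces
\[
h(v_i)^{\deg(v_i)} = \prod_{\substack{1 \leq j \leq n-1 \\ v_j \sim v_i}} h(v_j),
\]
which is exactly the GMVP at $v_i$ once one reinserts the factor $h(s) = 1$ for the sink when applicable. Thus the GMVP holds at every non-sink vertex, and Remark~\ref{GMVP-all-but-one} propagates it to $s$ automatically. So $h \in \HH$, and $\chi = \chi_h$ by construction. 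The two maps are visibly inverse to one another, completing the bijection.

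The only delicate point is the careful bookkeeping of whether the sink appears in the neighborhood of $v_i$: the reduced Laplacian drops the coefficient $h(s)$ from the expansion of $\chi_h'(\Delta e_i)$, and the normalization $h(s) = 1$ is precisely what reconciles this truncated product with the full-graph GMVP. Once this is handled, both directions are formal, and as a consistency check one recovers $|\HH| = |\widehat{\Gamma}| = \det(\Delta)$, matching the spanning tree count asserted in the introduction.
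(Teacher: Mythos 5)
Your proof is correct and follows essentially the same route as the paper's: verify $\chi_h'$ kills $\Delta\Z^{n-1}$ via the geometric mean value property plus $h(s)=1$, then reverse the same computation to produce $h \in \HH$ from a given character. The only cosmetic difference is that you explicitly invoke Remark~\ref{GMVP-all-but-one} to extend the mean-value property to the sink, which the paper leaves implicit.
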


\begin{proof}
From \eqref{chi-h-prime}, each function $\chi'_h$ is a homomorphism. For each standard basis vector $e_j \in \Z^{n-1}$,
\begin{equation}
\label{chi-geom-meanvalue}
\chi'_h(\Delta e_j) = h(v_j)^{\deg(v_j)} \prod_{v_k \sim v_j} h(v_k)^{-1} = 1,
\end{equation}
using the geometric mean value property and $h(s) = 1$. Therefore $\chi'_h$ is constant on cosets of 
$\Delta \Z^{n-1}$ and thus descends to $\chi_h \in \widehat{\Gamma}$.

Conversely, every $\chi \in \widehat{\Gamma}$ lifts to a homomorphism $\chi': \Z^{n-1} \to \T$ that is identically $1$ on $\Delta \Z^{n-1}$. Define $h: V \to \T$ by $h(v_j) = \chi'(e_j)$ for $j \leq n-1$ and $h(s) = 1$. Since $\chi'$ is a homomorphism, it satisfies equation \eqref{chi-h-prime}, so $\chi = \chi_h$. By equation \eqref{chi-geom-meanvalue}, $h$ satisfies the geometric mean value property at every non-sink vertex, implying that $h \in \HH$.
\end{proof}

Observe that the map $h \mapsto \chi_h$ gives an isomorphism between $\HH$ and $\widehat{\Gamma}$. Since $\Gamma$ is a finite abelian group, we have $\HH\cong\widehat{\Gamma}\cong\Gamma\cong\G$. 
Thus examining the multiplicative harmonic functions on $G$ can give insights about the structure of the corresponding sandpile group. For example, 
the following proposition gives another way to see that the sandpile group 
of an undirected graph 
does not depend on the choice of sink.

\begin{prop}
\label{sink_change}
If $\HH_{v}$ denotes the group of multiplicative harmonic functions on $G$ with sink at $v$, then the map 
$\phi:\HH_{u}\to\HH_{w}$ given by $\phi(h)(v)=h(w)^{-1}h(v)$ is an isomorphism.
\end{prop}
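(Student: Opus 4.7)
The plan is to verify directly that $\phi$ is well-defined, a homomorphism, and has a two-sided inverse of the same form.

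First I would check that $\phi(h)$ actually lies in $\HH_w$ when $h \in \HH_u$. The normalization $\phi(h)(w) = h(w)^{-1}h(w) = 1$ is immediate from the definition. For the geometric mean value property at a vertex $v \neq w$, writing $c = h(w)^{-1}$, we compute
\[
\phi(h)(v)^{\deg(v)} = c^{\deg(v)} h(v)^{\deg(v)}, \qquad \prod_{x \sim v} \phi(h)(x) = c^{\deg(v)} \prod_{x \sim v} h(x),
\]
so the identity $\phi(h)(v)^{\deg(v)} = \prod_{x \sim v}\phi(h)(x)$ is equivalent to the GMVP for $h$ at $v$. If $v \neq u$ this holds because $h \in \HH_u$. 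If $v = u$, the GMVP still holds at $u$ by Remark~\ref{GMVP-all-but-one}, since it holds at every vertex except possibly the sink, and the identity \eqref{e.eulerian} forces it at the sink as well. Hence $\phi(h) \in \HH_w$.

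Next, $\phi$ is clearly a homomorphism: for $h_1, h_2 \in \HH_u$,
\[
\phi(h_1 h_2)(v) = (h_1 h_2)(w)^{-1}(h_1 h_2)(v) = \bigl(h_1(w)^{-1} h_1(v)\bigr)\bigl(h_2(w)^{-1} h_2(v)\bigr) = \phi(h_1)(v)\,\phi(h_2)(v).
\]

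Finally, I would exhibit the inverse. Define $\psi : \HH_w \to \HH_u$ by $\psi(g)(v) = g(u)^{-1} g(v)$; by the symmetric version of the argument above, $\psi$ is a well-defined homomorphism. For $h \in \HH_u$, using $h(u) = 1$,
\[
\psi(\phi(h))(v) = \phi(h)(u)^{-1}\phi(h)(v) = \bigl(h(w)^{-1} h(u)\bigr)^{-1}\bigl(h(w)^{-1}h(v)\bigr) = h(u)^{-1}h(v) = h(v),
\]
and similarly $\phi \circ \psi = \mathrm{id}_{\HH_w}$ using $g(w) = 1$ for $g \in \HH_w$. Thus $\phi$ is bijective, completing the proof. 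There is no real obstacle here; the only subtlety worth flagging is the invocation of Remark~\ref{GMVP-all-but-one} to obtain the GMVP at the old sink $u$, which is exactly what permits the roles of $u$ and $w$ to be interchanged.
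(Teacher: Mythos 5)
The paper states Proposition~\ref{sink_change} without proof, so there is nothing to compare against; your argument is a correct and complete proof by the natural route (well-definedness, homomorphism, explicit two-sided inverse). One small remark: the paper's definition of a multiplicative harmonic function already requires the geometric mean value property ``for all $v \in V$,'' i.e.\ at the sink as well as at nonsink vertices. So in checking that $\phi(h) \in \HH_w$ you do not need to invoke Remark~\ref{GMVP-all-but-one} to cover the vertex $u$; the GMVP at $u$ holds for $h \in \HH_u$ by definition. Your invocation is harmless (and would be needed under the weaker definition that only imposes the GMVP at nonsink vertices), but it slightly obscures the fact that the core observation is simply that multiplying $h$ by the constant $h(w)^{-1}$ leaves both sides of \eqref{e.geomean} multiplied by the same factor $h(w)^{-\deg(v)}$, hence preserves the GMVP everywhere, while renormalizing the sink.
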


Returning to the language of sandpiles, we see that the characters of $\G$ are the functions $\{f_h : \G \to \T\}_{h \in \HH}$, where
	\[ f_{h}(\eta)=\prod_{v\in\V}h(v)^{\eta(v)}. \]
%In particular, the generators of the random walk get mapped to 
%$f_{h}(\sigma_{v_{j}})=\chi_{h}(e_{j}+\Delta\Z^{n-1})=h(v_{j})$ for $j=1,\ldots,n-1$, and $f_{h}(\sigma_{v_{n}})=\chi_{h}(\Delta\Z^{n-1})=1=h(v_{n})$.
Thus Lemma \ref{ab_gp_eigvals} implies the following.

\begin{theorem}
\label{sandpile_eigvals} 
An orthonormal basis of eigenfunctions for the transition matrix of the sandpile chain on $G$ is $\{f_{h}\}_{h\in\HH}$. 
The eigenvalue associated with $f_{h}$ is 
	\[ \lambda_{h} := \frac{1}{n}\sum_{v\in V}h(v). \]
\end{theorem}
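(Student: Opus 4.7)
My plan is to assemble Theorem~\ref{sandpile_eigvals} from the two pieces already in place: the abstract Fourier-analytic formula of Lemma~\ref{ab_gp_eigvals} and the concrete identification of characters from Lemma~\ref{irr_chars}. The only real work is to transport everything through the isomorphism $\G \cong \Gamma = \Z^{n-1}/\Delta\Z^{n-1}$ and to match the character values with the values of $h$.

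First I would recall, as discussed just before Section~2.1, that the sandpile chain on $\G$ is conjugate (via $\eta \mapsto (\eta(v_1),\ldots,\eta(v_{n-1})) + \Delta\Z^{n-1}$) to the random walk on the finite abelian group $\Gamma$ driven by the measure $\mu$ that is uniform on the $n$ cosets $\{e_1+\Delta\Z^{n-1},\ldots,e_{n-1}+\Delta\Z^{n-1},\zero+\Delta\Z^{n-1}\}$ (the last coset, coming from $\delta_s = \iota$, corresponds to the holding move). Lemma~\ref{ab_gp_eigvals} applies directly to $(\Gamma,\mu)$, producing an orthonormal basis of eigenfunctions $\widehat{\Gamma}$ with eigenvalues $\widehat{\mu}(\chi)$ for each character $\chi$.

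Next, by Lemma~\ref{irr_chars}, the characters of $\Gamma$ are exactly $\{\chi_h\}_{h\in\HH}$. I would compute the Fourier coefficient at $\chi_h$ by plugging in the definition of $\chi_h$ from \eqref{chi-h-prime}: for each non-sink basis vector $e_j$ we get $\chi_h(e_j + \Delta\Z^{n-1}) = h(v_j)$, and for the sink contribution we get $\chi_h(\zero + \Delta\Z^{n-1}) = 1 = h(s)$. Summing,
\[
\widehat{\mu}(\chi_h) \;=\; \frac{1}{n}\left(\sum_{j=1}^{n-1} h(v_j) + h(s)\right) \;=\; \frac{1}{n}\sum_{v \in V} h(v) \;=\; \lambda_h.
\]

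Finally, I would translate the eigenfunctions back to the sandpile picture: under the isomorphism $\G \cong \Gamma$, the character $\chi_h$ pulls back to the function $\eta \mapsto \prod_{j=1}^{n-1} h(v_j)^{\eta(v_j)}$, which equals $f_h(\eta) = \prod_{v\in\V} h(v)^{\eta(v)}$ by definition. Orthonormality of $\{f_h\}$ on $\G$ follows from orthonormality of $\{\chi_h\}$ on $\Gamma$ together with the fact that an isomorphism of abelian groups carries the inner product $\frac{1}{|\Gamma|}\sum_{\gamma}f(\gamma)\overline{g(\gamma)}$ to the analogous inner product on $\G$.

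There is essentially no obstacle here; the only point that requires a moment of care is the role of the holding coset $\zero + \Delta\Z^{n-1}$ in the Fourier sum, which is exactly what produces the $h(s) = 1$ term and lets us write the eigenvalue as an average over all of $V$ rather than over $\V$.
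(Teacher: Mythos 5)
Your proposal is correct and follows essentially the same route as the paper's proof: both apply Lemma~\ref{ab_gp_eigvals} to the random walk driven by the (pushforward of the) uniform distribution, both identify the characters via Lemma~\ref{irr_chars}, and both evaluate the Fourier coefficient $\widehat{\mu}(\chi_h)$ to obtain $\frac{1}{n}\sum_{v\in V}h(v)$. The only cosmetic difference is that the paper carries out the computation directly on $\G$ using $f_h(\sigma_v)=h(v)$, while you first pass to $\Gamma=\Z^{n-1}/\Delta\Z^{n-1}$ and then pull back; these are the same argument up to the isomorphism $\G\cong\Gamma$.
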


\begin{proof}
Letting $p$ denote the uniform distribution on $\{\sigma_{v}\}_{v\in V}$, we see that the eigenvalue associated with $f_{h}$ is 
\[
\widehat{p}(f_{h})=\sum_{\eta\in\G}p(\eta)f_{h}(\eta)
=\sum_{v\in V}\frac{1}{n}f_{h}(\sigma_{v})=\frac{1}{n}\sum_{v\in V}h(v). \qedhere
\]
\end{proof}

Before proceeding to our main topic of mixing times, we remark on two ways to extend the above analysis.

\begin{remark}
\label{arbitrary-chip-addition}
Theorem~\ref{sandpile_eigvals} holds for an arbitrary chip-addition distribution $\mu$, with the only change being that $f_{h}$ then has eigenvalue $\lambda_{h}=\sum_{v\in V}\mu(v)h(v)$. For example, we may choose to add chips at a uniform \emph{nonsink} vertex, in which case the eigenvalue associated with $f_{h}$ is 
	\[ \widetilde{\lambda}_{h} = \frac{1}{n-1}\sum_{v\in \V}h(v). \] 
Since the only multiplicative harmonic function that is constant on $\V$ is $h \equiv 1$ (as can be seen by applying the geometric mean value property at a neighbor of the sink), we have $\big| \widetilde{\lambda}_h \big| < 1$ for all nontrivial $h \in \HH$. Therefore, this `non-lazy' version of the sandpile chain is aperiodic despite the lack of holding probabilities.
\end{remark}

\begin{remark}
\label{directed-graph}
Suppose that $G$ is a directed multigraph in which every vertex has a directed path to the sink. In this setting there is a Laplacian $\overbar{\Delta}$ analogous to \eqref{e.thelaplacian} although it is no longer a symmetric matrix: its off-diagonal entries are $\overbar{\Delta}_{uv} = -e_{uv}$ where $e_{uv}$ is the number of edges directed from $u$ to $v$, and its diagonal entries are $\overbar{\Delta}_{uu} = d_u-e_{uu}$ where $d_u$ is the outdegree of vertex $u$. The sandpile group is 
naturally isomorphic to $\Z^{n-1} / \Delta_s^\transpose \Z^{n-1}$, where $\Delta_s^\transpose$ is the transpose of the submatrix omitting the row and column of $\overbar{\Delta}$ corresponding to the sink $s$. Unlike in the undirected case, this group may depend on the choice of $s$ (but see \cite[Theorem 2.10]{FL15}). If we define
\[
\HH_s := \Big\{h:V\to \T\textrm{ such that }h(s)=1\textrm{ and }h(u)^{d_u}
=\prod_{v\in V}h(v)^{e_{uv}}\textrm{ for all }u\neq s \Big\}
\] 
then nearly all of this section carries over to the directed case. The exceptions are \eqref{e.eulerian} and Proposition~\ref{sink_change}, which hold only when $G$ is Eulerian.
\end{remark}

\subsection{Mixing times}
\label{Mixing times}

Since the sandpile chain is an irreducible and aperiodic random walk on a finite group, the law of the chain at time $t$ approaches the uniform distribution on $\G$ as $t\to\infty$. Our interest is in the rate of convergence. To avoid trivialities, we assume throughout that $G$ is not a tree, so that $|\G| > 1$.

The metrics we consider are the \emph{$L^{2}$ distance} 
\[
\|\mu-\nu\|_{2}=\Big(|\G|\sum_{g\in\G}|\mu(g)-\nu(g)|^{2}\Big)^{\frac{1}{2}}
\]
and the \emph{total variation distance}
\[
\|\mu-\nu\|_{\TV}=\frac{1}{2}\sum_{g\in\G}|\mu(g)-\nu(g)|
=\max_{A\subseteq\G}\left(\mu(A)-\nu(A)\right).
\]
Note that Cauchy-Schwarz immediately implies $\|\mu-\nu\|_{\TV}\leq \frac{1}{2}\|\mu-\nu\|_{2}$.

We always assume that the chain is started from a deterministic state $\sigma \in \G$. 
As a random walk on a group, the distance to stationarity after $t$ steps under either of these metrics 
is independent of $\sigma$, so without loss of generality we take $\sigma=id$ henceforth.  Writing $P_{id}^{t}$ for the distribution of the sandpile chain at time $t$ and $U$ for the uniform distribution on $\G$, the following lemma (proved in \cite{SalCo}) shows that 
$\|P_{id}^{t}-U\|_{2}$ is completely determined by the eigenvalues from Theorem \ref{sandpile_eigvals}.

\begin{lemma}
\label{Fourier}
Let $Q_{id}^t$ be the $t$-step distribution of a random walk on a finite abelian group $A$ started at 
the identity and driven by a probability measure $\mu$, and let $\pi$ be the uniform distribution on $A$.
Then
\[
\|Q_{id}^{t}-\pi\|_{2}^{2}=\sum_{\chi\neq 1}|\widehat{\mu}(\chi)|^{2t}
\]
where the sum is over all nontrivial characters. 
\end{lemma}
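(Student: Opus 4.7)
The plan is to apply the standard Plancherel identity for Fourier analysis on finite abelian groups. Because the walk starts at the identity, $Q_{id}^t$ is the $t$-fold convolution $\mu^{*t}$. Under the Fourier transform $\widehat{f}(\chi) := \sum_{a \in A} f(a) \chi(a)$, convolution becomes multiplication, so $\widehat{Q_{id}^t}(\chi) = \widehat{\mu}(\chi)^t$. (This is also immediate from Lemma~\ref{ab_gp_eigvals}, which diagonalizes the transition operator in the character basis with eigenvalues $\widehat{\mu}(\chi)$.)

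Next I would compute the Fourier transform of the uniform distribution: by the orthogonality of characters, $\widehat{\pi}(\chi) = \frac{1}{|A|}\sum_a \chi(a)$ equals $1$ if $\chi$ is trivial and $0$ otherwise. Since $\widehat{\mu}(1) = \sum_a \mu(a) = 1$ as well, the Fourier transforms of $Q_{id}^t$ and $\pi$ agree at the trivial character, and at every nontrivial $\chi$ the difference satisfies $\widehat{Q_{id}^t - \pi}(\chi) = \widehat{\mu}(\chi)^t$.

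Finally I would invoke Parseval. A short computation from the character orthogonality relation $\sum_{\chi} \chi(a-b) = |A|\,\mathbf{1}_{\{a=b\}}$ yields
\[
|A|\sum_{a \in A} |f(a)|^2 = \sum_{\chi} |\widehat{f}(\chi)|^2
\]
for every $f:A \to \C$. Applied to $f = Q_{id}^t - \pi$, together with the two previous computations, this gives exactly $\|Q_{id}^t - \pi\|_2^2 = \sum_{\chi \neq 1} |\widehat{\mu}(\chi)|^{2t}$.

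The argument is entirely formal, so there is no genuine obstacle. The only subtlety is bookkeeping between the unnormalized Fourier transform and the $|A|$-weighted definition of $\|\cdot\|_2$ used in the paper; these are chosen precisely so that the Plancherel factors cancel and no $|A|$ survives on the right-hand side.
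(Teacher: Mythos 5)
Your proof is correct and follows exactly the Fourier-analytic route the paper gestures at: the paper does not write out a proof but cites \cite{SalCo} and notes that the lemma is a special case of the Fourier (Plancherel) bound from \cite{Dia}, or equivalently the eigenfunction expansion via Lemma~\ref{ab_gp_eigvals}. Your bookkeeping between the unnormalized transform $\widehat{f}(\chi)=\sum_a f(a)\chi(a)$ and the $|A|$-weighted $L^2$ norm is right, so the $|A|$ factors cancel as claimed.
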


\noindent For the sandpile chain, this says that 
\begin{equation}
\label{L2_equality}
\|P_{id}^{t}-U\|_{2}^{2}=\sum_{h\in\HH\setminus\{1\}}|\lambda_{h}|^{2t}.
\end{equation}

Lemma \ref{Fourier} is a special case of the famous Fourier bound from \cite{Dia}. It also follows 
by an eigenfunction expansion exactly as in the standard proof of the spectral bound for reversible chains 
(see \cite[Ch.\ 12]{LPW}). Though sandpile chains are not reversible in general, many of the same arguments still apply because of the orthogonality of the eigenfunctions. Probabilistically, this orthogonality is equivalent to the statement that the transition operator commutes with its time reversal.

Total variation convergence rates can also be estimated in terms of spectral information. 
To make this precise, we introduce some terminology. Let
$\lambda_{\ast}=\max_{h\in\HH\setminus\{1\}}|\lambda_{h}|$ denote the size of the subdominant 
eigenvalue of $P$. The (discrete time) \emph{spectral gap} is defined as $\gamma_d=1-\lambda_{\ast}$, and the \emph{relaxation time} as $\trel=\gamma_d^{-1}$.
The \emph{mixing time} is  
$\tmix(\e)=\min\{t\in\N : \|P_{id}^{t}-U\|_{\TV}\leq\e\}$.
The following proposition is standard for reversible Markov chains. It holds in our context as well 
using the above definition of relaxation time.

\begin{prop}
\label{TV_eigval}
The mixing time of the sandpile chain satisfies
\[
\log\left(\frac{1}{2\e}\right)(\trel-1)\leq \tmix(\e)
\leq \left\lceil\log\left(\frac{|\G|^{\frac{1}{2}}}{2\e}\right)\trel\right\rceil.
\]
\end{prop}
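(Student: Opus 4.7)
The plan is to mirror the classical proof for reversible Markov chains (e.g., \cite[Ch.~12]{LPW}), substituting the $L^2$ Fourier identity \eqref{L2_equality} and the orthonormality of the characters $\{f_h\}_{h \in \HH}$ (Theorem~\ref{sandpile_eigvals}) for the usual spectral decomposition under reversibility. Both bounds rest on a single reference quantity: the subdominant $\lambda_\ast = 1 - \gamma_d$.

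For the upper bound, I would first invoke the Cauchy--Schwarz estimate $\|P_{id}^t - U\|_{\TV} \leq \frac{1}{2}\|P_{id}^t - U\|_2$ noted just below the definition of TV distance, and then apply \eqref{L2_equality} to obtain
\[
\|P_{id}^t - U\|_2^2 = \sum_{h \in \HH\setminus\{1\}} |\lambda_h|^{2t} \leq (|\G|-1)\,\lambda_\ast^{2t} \leq |\G|\, e^{-2\gamma_d t},
\]
using $\lambda_\ast \leq e^{-\gamma_d}$. Requiring $\tfrac{1}{2}|\G|^{1/2}e^{-\gamma_d t} \leq \e$ and solving for $t$ gives $t \geq \trel \log(|\G|^{1/2}/(2\e))$, and taking the ceiling yields the claimed integer bound.

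For the lower bound, I would use a witness-eigenfunction argument. Pick $h^\ast \in \HH \setminus \{1\}$ with $|\lambda_{h^\ast}| = \lambda_\ast$ and consider the associated eigenfunction $f_{h^\ast}$ from Theorem~\ref{sandpile_eigvals}. Since $f_{h^\ast}$ is a character of $\G$, it satisfies $\|f_{h^\ast}\|_\infty = 1$, $f_{h^\ast}(id) = 1$, and orthogonality with the trivial character gives $\EE_U[f_{h^\ast}] = 0$. Iterating the eigenfunction relation yields $\EE_{P_{id}^t}[f_{h^\ast}] = \lambda_{h^\ast}^t f_{h^\ast}(id) = \lambda_{h^\ast}^t$, so
\[
\lambda_\ast^t = \bigl|\lambda_{h^\ast}^t\bigr| = \Big| \sum_{\eta \in \G} \bigl(P_{id}^t(\eta) - U(\eta)\bigr)f_{h^\ast}(\eta)\Big| \leq 2\|P_{id}^t - U\|_{\TV}.
\]
Setting $t = \tmix(\e)$ gives $\lambda_\ast^{\tmix(\e)} \leq 2\e$, i.e., $\tmix(\e) \log(1/\lambda_\ast) \geq \log(1/(2\e))$. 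The last step is the elementary inequality $-\log x \leq (1-x)/x$ on $(0,1]$, which at $x = \lambda_\ast$ gives $\log(1/\lambda_\ast) \leq (1-\lambda_\ast)/\lambda_\ast = 1/(\trel-1)$, and rearranging produces the stated lower bound.

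The only real subtlety is the lack of reversibility: the usual textbook proofs use self-adjointness of the transition operator on $L^2(\pi)$, which fails in our setting. The workaround is that the eigenfunctions, being group characters, are still orthonormal (Lemma~\ref{ab_gp_eigvals})---and orthonormality is precisely what \eqref{L2_equality} and the test-function computation above require, so the standard arguments transfer with no further modification.
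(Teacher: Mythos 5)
Your proposal is correct and follows essentially the same route as the paper: the upper bound is the $L^2$-to-TV argument with the sum in \eqref{L2_equality} bounded by $|\G|\lambda_\ast^{2t}$, and the lower bound is the witness-eigenfunction argument that constitutes the proof of \cite[Theorem~12.4]{LPW}, which the paper simply cites. Your explicit verification that $\EE_U[f_{h^\ast}]=0$ via character orthogonality is precisely the paper's observation that the one reversibility-dependent step in the LPW proof carries over to the nonreversible setting.
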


\begin{proof}
The upper bound follows from \eqref{L2_equality} and the $L^2$ bound on total variation by bounding the summands with $\lambda_{\ast}^{2t}$. Arithmetic manipulations then give the mixing time bound; see \cite[Ch.\ 12]{LPW} for details.

The lower bound is Theorem 12.4 from \cite{LPW}. Reversibility is used there only to conclude that the constant function is orthogonal to nontrivial eigenfunctions in the 
inner product weighted by the stationary distribution. However, the same statement holds also for nonreversible chains since the stationary distribution, as a left eigenfunction with eigenvalue $1$, is orthogonal to all nontrivial right eigenfunctions under the standard inner product.
\end{proof}

Our next result gives a universal lower bound on the spectral gap $\gamma_d$ (and thus an upper bound on $\trel$) in terms of the number of vertices $n$ and the second largest degree $d_*$ of the underlying graph.

\begin{theorem}
\label{lazy_gap_bd}
Suppose that $G$ has degree sequence $d_{1}\leq\cdots\leq d_{n-1}\leq d_{n}$ and set $d_{\ast}=d_{n-1}$. Then the spectral gap of the (discrete time) sandpile chain on $G$ satisfies 
\[
\gamma_d\geq\frac{8}{d_{\ast}^{2}n}.
\]
\end{theorem}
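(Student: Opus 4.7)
The plan is to translate the problem into a statement about an additively harmonic real-valued function, via a rotation trick combined with the geometric mean value property.

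First I will use that $\gamma_d$ is independent of the choice of sink. Concretely, Proposition~\ref{sink_change} sends $h\in\HH_u$ to $h(w)^{-1}h\in\HH_w$, and the eigenvalue formula $\lambda_h=\frac{1}{n}\sum_v h(v)$ shows that this isomorphism only rotates each $\lambda_h$ by a unit constant, so it preserves $|\lambda_h|$. Thus I may relocate the sink to a vertex of maximum degree, so that $\deg(v)\leq d_\ast$ for every non-sink $v$. Next I pick $\phi$ with $e^{-i\phi}\lambda_h=|\lambda_h|\in[0,1]$ and set $h'(v):=e^{-i\phi}h(v)$. Multiplication by the constant $e^{-i\phi}$ preserves the GMVP at every non-sink vertex, and Remark~\ref{GMVP-all-but-one} then forces it to hold at the sink as well. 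Because $\sum_v h'(v)=n|\lambda_h|$ is real and $|h'(v)|=1$, the simple calculation $1-\Re h'(v)=\tfrac{1}{2}|1-h'(v)|^2$ gives the key identity
\[
1-|\lambda_h|=\frac{1}{2n}\sum_{v\in V}|1-h'(v)|^{2}.
\]
So the theorem reduces to the bound $\sum_v|1-h'(v)|^{2}\geq 16/d_\ast^{2}$ for every nontrivial $h$.

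For the core step, write $h'(v)=e^{i\theta_v}$ with $\theta_v\in(-\pi,\pi]$. The GMVP now reads $\sum_{u\sim v}\theta_u\equiv\deg(v)\theta_v\pmod{2\pi}$, i.e.\ $\sum_{u\sim v}(\theta_u-\theta_v)\in 2\pi\Z$. Suppose, for contradiction, that $|\theta_v|<\pi/d_\ast$ for every $v$. Then at every non-sink $v$,
\[
\Bigl|\sum_{u\sim v}(\theta_u-\theta_v)\Bigr|\leq 2\deg(v)\cdot\frac{\pi}{d_\ast}\leq 2\pi,
\]
and being an integer multiple of $2\pi$ it must equal $0$. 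So $\theta$ is (genuinely, not merely mod $2\pi$) additively harmonic on $V\setminus\{s\}$ with boundary value $\theta_s$, whence discrete Dirichlet uniqueness forces $\theta\equiv\theta_s$ constant, making $h\equiv 1$ and contradicting nontriviality. Hence some $v^*$ has $|\theta_{v^*}|\geq\pi/d_\ast$, so $|1-h'(v^*)|=2\sin(|\theta_{v^*}|/2)\geq 2\sin(\pi/(2d_\ast))\geq 2/d_\ast$ by the bound $\sin x\geq 2x/\pi$ on $[0,\pi/2]$.

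The \emph{main obstacle} is that a single extremal vertex only gives $\sum_v|1-h'(v)|^{2}\geq 4/d_\ast^{2}$ and thus $\gamma_d\geq 2/(d_\ast^{2}n)$, which is a factor of $4$ short. To close the gap, my plan is to revisit the extremal vertex more carefully. Choose $v^*$ to maximize $|\theta_v|=:T$. Extremality rules out $\sum_{u\sim v^*}(\theta_u-\theta_{v^*})=0$ (otherwise all neighbors also attain the max, and by connectivity $\theta$ would propagate to a constant, contradicting nontriviality), so this sum equals a \emph{nonzero} multiple of $2\pi$. Carrying through the equality case of the triangle inequality $|\sum_u(\theta_u-\theta_{v^*})|\leq\sum_u|\theta_u-\theta_{v^*}|\leq 2\deg(v^*)T$, the averaging shows that in the tight regime $T=\pi/d_\ast$ each of the $\deg(v^*)$ neighbors is forced to attain $|\theta_u|=\pi/d_\ast$ as well, and iterating the argument at such a neighbor (again using the extremality of the threshold) produces a second layer of vertices with $|\theta|\geq\pi/d_\ast$. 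Quantifying this propagation to extract at least four extremal vertices (or, equivalently, arguing via the identity $1-|\lambda_h|^{2}=n^{-2}\sum_{v<w}|h(v)-h(w)|^{2}$ combined with $1-|\lambda_h|\geq\tfrac{1}{2}(1-|\lambda_h|^{2})$) is the step I expect to be most delicate; once four such contributions are in hand, $\sum_v|1-h'(v)|^{2}\geq 4\cdot 4/d_\ast^{2}=16/d_\ast^{2}$ and the stated bound $\gamma_d\geq 8/(d_\ast^{2}n)$ follows.
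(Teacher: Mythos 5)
Your setup --- rotating by $e^{-i\phi}$ so that $\sum_v h'(v)$ is a nonnegative real, converting GMVP into $\sum_{u\sim v}(\theta_u-\theta_v)\in 2\pi\Z$, and using the Dirichlet argument to conclude that some $|\theta_{v^*}|\geq\pi/d_*$ --- is sound and is essentially the same contradiction argument as the paper's (they phrase it as: $h(V)$ cannot lie in an arc of length $<2\pi/d_*$). But as you yourself flag, the single-vertex bound falls short, and the proposed repair does not close the gap. Two concrete problems:

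\emph{The shortfall is a factor of $2$, and your estimate of it is also off.} Using Lemma~\ref{cos-upper-bound} with $r=\pi/2$ rather than $\sin x\geq 2x/\pi$, one vertex with $|\theta_{v^*}|\geq\pi/d_*$ gives $|1-h'(v^*)|^2=2(1-\cos\theta_{v^*})\geq 8/d_*^2$, so $\gamma_d\geq 4/(d_*^2 n)$. Thus the true deficit is a factor of $2$, not $4$, and what you are missing is the contribution from a \emph{second} extreme angle, not three more.

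\emph{The propagation argument is not rigorous and cannot be made so.} The forcing ``all neighbors of $v^*$ attain $|\theta|=T$'' only follows from the equality case $T=\pi/d_*$, $\deg(v^*)=d_*$, with the sum exactly $\pm 2\pi$. When $T>\pi/d_*$ strictly, or $\deg(v^*)<d_*$, the inequalities have slack and say nothing about the individual neighbor angles; e.g.\ the sum could be $-2\pi$ yet be dominated by a single large gap $\theta_u-T$, leaving all other neighbors with $|\theta_u|$ far below $\pi/d_*$. So one cannot in general ``extract at least four extremal vertices'' (and indeed need not, since only two are required). The alternative via $1-|\lambda_h|\geq\tfrac12(1-|\lambda_h|^2)=\tfrac1{2n^2}\sum_{v<w}|h(v)-h(w)|^2$ loses an extra factor of $2$ immediately and, counting one good pair per vertex, lands at $\frac{4n}{d_*^2}$ where $\frac{16n}{d_*^2}$ is needed.

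The paper avoids the single-extremal-vertex trap by proving the ``no small arc'' lemma for an \emph{arbitrary} arc, not just the one centered at $1$. Applying it to the arc $[\phi_1,\phi_2]$ where $\phi_1=\min_v\theta_v$ and $\phi_2=\max_v\theta_v$ yields the stronger statement
\[
\phi_2-\phi_1\geq \frac{2\pi}{d_*},
\]
i.e.\ the two extremes together subtend at least $2\pi/d_*$. Together with $\phi_1\leq 0\leq\phi_2$ (which follows from $\sum_v\sin\theta_v=0$ after your rotation), the sum-to-product identity gives
\[
\cos\phi_1+\cos\phi_2 \;=\; 2\cos\Bigl(\tfrac{\phi_1+\phi_2}{2}\Bigr)\cos\Bigl(\tfrac{\phi_2-\phi_1}{2}\Bigr)
\;\leq\; 2\cos\Bigl(\tfrac{\pi}{d_*}\Bigr),
\]
and Lemma~\ref{cos-upper-bound} then delivers $(1-\cos\phi_1)+(1-\cos\phi_2)\geq 2\bigl(1-\cos(\pi/d_*)\bigr)\geq 8/d_*^2$, i.e.\ exactly the missing factor of $2$. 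So the fix is not to find more vertices with $|\theta|\geq\pi/d_*$, but to track the minimum and the maximum angle separately and exploit that their \emph{separation} is at least $2\pi/d_*$.
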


The proof uses the following inequality.

\begin{lemma} 
\label{cos-upper-bound}
Suppose $0 \leq r \leq 2\pi$. Then $\cos(x) \leq 1 - cx^2$ for all $|x| \leq r$, where $c =\frac{1-\cos(r)}{r^{2}}$.
\end{lemma}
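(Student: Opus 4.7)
The plan is to reduce the claim to a monotonicity statement for the function $g(x) := (1-\cos x)/x^2$. Indeed, for $x \neq 0$ the inequality $\cos(x) \leq 1 - cx^2$ is equivalent to $g(x) \geq c$, and we have defined $c = g(r)$; meanwhile at $x=0$ both sides of the desired inequality are equal. Since $g$ is even, it suffices to show that $g$ is non-increasing on $(0, 2\pi]$; then $|x| \leq r$ forces $g(x) = g(|x|) \geq g(r) = c$.

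Next I would invoke the half-angle identity $1 - \cos x = 2\sin^2(x/2)$ to rewrite
\[
g(x) = \frac{2\sin^2(x/2)}{x^2} = \frac{1}{2}\left(\frac{\sin(x/2)}{x/2}\right)^2,
\]
so the problem reduces to showing that the function $s(t) := \sin(t)/t$ is nonnegative and non-increasing on $(0,\pi]$ (note that $x \in (0, 2\pi]$ corresponds to $x/2 \in (0,\pi]$, the range on which $\sin$ is nonnegative).

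To establish monotonicity of $s$, compute
\[
s'(t) = \frac{t\cos t - \sin t}{t^2},
\]
and set $\phi(t) = t \cos t - \sin t$. Then $\phi(0) = 0$ and $\phi'(t) = -t\sin t \leq 0$ for $t \in [0, \pi]$, so $\phi$ is non-increasing, giving $\phi(t) \leq 0$ and hence $s'(t) \leq 0$ on $(0,\pi]$. Therefore $s$ is non-increasing and nonnegative on this interval, so $s^2$ is non-increasing on $(0,\pi]$ as well. Consequently $g$ is non-increasing on $(0, 2\pi]$, completing the argument.

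There is no serious obstacle here; the only thing to watch is the range restriction $r \leq 2\pi$, which is exactly what keeps $\sin(x/2)$ from changing sign and guarantees $s^2$ is monotone rather than oscillatory.
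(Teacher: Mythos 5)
Your proof is correct, and it takes a genuinely cleaner route than the paper's. Both arguments reduce the claim to showing that $f(x) = (1-\cos x)/x^2$ is non-increasing on $(0,2\pi]$, but the paper differentiates $f$ directly, arriving at a numerator $g(x) = x\sin x - 2(1-\cos x)$, and then must differentiate twice more ($g'(x) = x\cos x - \sin x$, $g''(x) = -x\sin x$) to locate a critical point $x_0 \in (\pi, 2\pi)$ and argue from $g(0) = g(2\pi) = 0$ that $g < 0$ on $(0,2\pi)$. Your use of the half-angle identity $1 - \cos x = 2\sin^2(x/2)$ rewrites $f(x) = \tfrac{1}{2}\bigl(\sin(x/2)/(x/2)\bigr)^2$ and collapses the problem to the standard fact that $\sin(t)/t$ is nonnegative and non-increasing on $(0,\pi]$, which you settle with a single first-order argument on $\phi(t) = t\cos t - \sin t$. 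Notably, your $\phi$ is (up to the variable name) exactly the paper's $g'$, so you have effectively skipped one layer of differentiation: the half-angle substitution does the work of the paper's second-derivative analysis. The tradeoff is negligible; the two proofs cover the same interval and both use the boundary condition $r \leq 2\pi$ in an essential way (in yours, to keep $\sin(x/2) \geq 0$ so that squaring preserves monotonicity).
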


\begin{proof}
Let $f(x) = [1 - \cos(x)]\big/x^2$, with $f(0) = 1/2$ so that $f$ is continuous on~$\R$. We will show that $f$ is decreasing on the interval $[0,2\pi]$. The inequality $f(x) \geq f(r)$ for $0 \leq x \leq r \leq 2\pi$ rearranges into the desired inequality $\cos(x) \leq 1 - cx^2$. Since $f$ is even, the inequality also holds when $-2\pi \leq -r \leq x \leq 0$.

We have $f'(x) = g(x)/x^3$, where $g(x) = x\sin(x) - 2(1 - \cos(x))$. Thus, it suffices to show that $g$ is negative on the interval $(0, 2\pi)$. Note that $g(0) = g(2\pi) = 0$. As well, $g'(x) = x\cos(x) - \sin(x)$ and $g''(x) = -x\sin(x)$. This means $g'$ is strictly decreasing for $x \in [0,\pi]$ and strictly increasing for $x \in [\pi,2\pi]$. Since $g'(0) = 0$ and $g'(2\pi) = 2\pi$, there is $x_0 \in (\pi,2\pi)$ such that $g'(x_0) = 0$ and $g'(x) < 0$ for $x \in (0,x_0)$ and $g'(x) > 0$ for $x \in (x_0,2\pi]$. Therefore $g$ is strictly decreasing for $x \in [0,x_0]$ and strictly increasing for $x \in [x_0,2\pi]$. Because $g(0) = g(2\pi) = 0$, we conclude that $g(x) < 0$ for $x \in (0,2\pi)$, as desired.
\end{proof}

\begin{proof}[Proof of Theorem~\ref{lazy_gap_bd}]
We note at the outset that Proposition \ref{sink_change} implies that the moduli of the eigenvalues 
are invariant under change of sink, so we may assume without loss of generality that $s$ is located at 
a vertex of maximum degree. Accordingly, $\deg(v)\leq d_{\ast}$ for all $v\in\V$.

Now fix $h\in\HH$ and suppose that there exists an arc 
$C_{ab}=\{e^{i\theta}:2\pi a\leq\theta\leq 2\pi b\}$ with $0<b-a<1/d_{\ast}$ such that 
$h(v)\in C_{ab}$ for every $v\in V$. We will show this implies $h \equiv 1$. Write $h(v)=e^{2\pi i g(v)}$, where 
$g:V\to [a,b]$.  Note that for all $v \in V$,
	\[ \overbar{\Delta} g (v) = \sum_{w \sim v} (g(v)-g(w)) \]
is an integer, by the geometric mean value property of $h$ at $v$. On the other hand, since $g(V)\subseteq [a,b]$,  
\[
\left| \overbar{\Delta} g (v)\right|\leq\sum_{w \sim v}\left|g(v) - g(w)\right| \leq d_{\ast} (b-a) < 1
\]
for all $v\in\V$. Since the left side is an integer it must be zero, so $g$ is harmonic on $\V$ in the usual sense. 
Uniqueness of harmonic extensions implies $g\equiv g(s)$ and thus $h\equiv h(s)=1$, as desired.

For any fixed $h \in\HH\setminus\{1\}$ write $\lambda_h = re^{i \theta}$ with $r \geq 0$. 
The preceding argument shows that $h(V)$ is not contained in any segment of the unit circle having 
arc length less than $2\pi/d_{\ast}$.
For each $v \in V$, let $A(v)$ be the unique angle $-\pi < \phi \leq \pi$ such that 
$h(v) = e^{i(\theta + \phi)}$. Let $\phi_1 = \min_{v \in V} A(v)$ and $\phi_2 = \max_{v \in V} A(v)$, 
so that $\phi_2 - \phi_1 \geq 2\pi / d_\ast$. We have
\[
|\lambda_h| = \frac{1}{n} \sum_{v \in V} \cos(A(v)) \leq \frac{1}{n} \left[ n-2 + \cos(\phi_1) + \cos(\phi_2) \right].
\]
Using the identity
\[
\cos(\phi_1) + \cos(\phi_2) = 2 \cos\left( \frac{\phi_2 + \phi_1}{2} \right) \cos\left( \frac{\phi_2 - \phi_1}{2} \right) \leq 2 \cos\left( \frac{\phi_2 - \phi_1}{2} \right),
\]
along with $\pi / d_\ast \leq (\phi_2 - \phi_1)/2 < \pi$ and the fact that  cosine  is 
decreasing on $[0,\pi]$, we have
\[
|\lambda_h| \leq 
\frac{1}{n} \left[ n-2 + 2 \cos\left( \frac{\pi}{d_\ast} \right) \right].
\]
Recalling our assumption that $G$ is not a tree, $\pi/d_{\ast}\leq\pi/2$, so Lemma \ref{cos-upper-bound} gives
\[
\left|\lambda_{h}\right|   \leq\frac{1}{n}\left[n-2+2\left(1-\frac{4}{\pi^{2}}\left(\frac{\pi}{d_{\ast}}\right)^{2}\right)\right]=1-\frac{8}{d_{\ast}^{2}n}. \qedhere
\]
\end{proof}

Note that the number of stable configurations, and thus the order of the sandpile group, is at most $d_{\ast}^{n-1}$, so it follows from Theorem \ref{lazy_gap_bd} and Proposition \ref{TV_eigval} that 
$\tmix(\e)=O\left((d_{\ast}^{2}\log d_\ast)n^{2}\right)$. 
In Section \ref{The Smoothing Parameter}, we will show that this bound can be improved to  $O\left(d_{\ast}^{2}n\log n \right)$.

\subsection{Gadgets}
\label{Gadgets}

In many cases, one can determine the order of the relaxation time $\trel$ of the sandpile chain by constructing just a single multiplicative harmonic function. For bounded degree graphs, Theorem \ref{lazy_gap_bd} shows that $\trel=O(n)$. Thus to show that $\trel=\Theta(n)$, it suffices to find an $h\in\HH$ with $|\lambda_{h}|\geq 1-C/n$ for some constant $C$. Since the eigenvalues are all of the form $\lambda_{h}=\frac{1}{n}\sum_{v\in V}h(v)$, we see that large eigenvalues correspond to `nearly constant' multiplicative harmonic functions. In particular, if there is an $h\in\HH$ which is constant on $U\subseteq V$ with $|V\setminus U|=m$, then 
$|\lambda_{h}|\geq (n-2m)/n$. 

For example, consider the $m$-fold Sierpinski gasket graph $SG_{2}(m)$ defined recursively as follows: $SG_{2}(0)$ is the triangle $K_{3}$ and $SG_{2}(m)$ for $m \geq 1$ is obtained by gluing three copies of $SG_{2}(m-1)$ to obtain a triangle with center cut out. We take the `topmost' vertex as the sink (Figure \ref{sierpinski}).

\begin{figure}[h]
	\centering
	\includegraphics[scale=0.9]{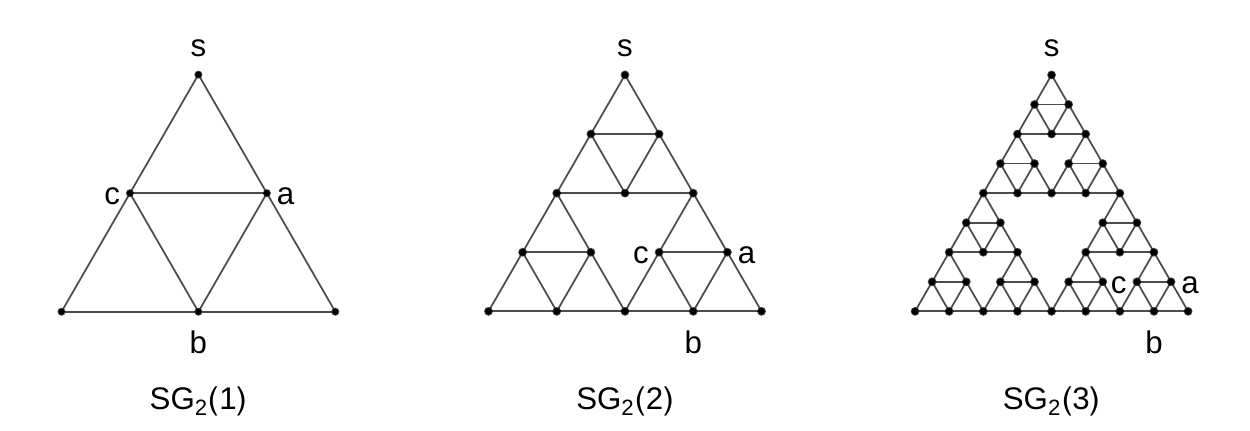}
	\caption{Sierpi\'{n}ski Gasket Graphs}
	\label{sierpinski}
\end{figure}

It is easy to see that $SG_{2}(m)$ has $\left|V(m)\right|=\frac{1}{2}\left(3^{m+1}+3\right)$ vertices, and it is known \cite{CCY} that the number of spanning trees is $\left|\G\left(SG_{2}(m)\right)\right|=2^{\alpha(m)} 3^{\beta(m)} 5^{\gamma(m)}$ with $\alpha(m)=\frac{1}{2}\left(3^{m}-1\right)$, $\beta(m)=\frac{1}{4}\left(3^{m+1}+2m+1\right)$, $\gamma(m)=\frac{1}{4}\left(3^{m}-2m-1\right)$. 
To the best of the authors' knowledge, the invariant factor decomposition of $\G\left(SG_{2}(m)\right)$ is an open problem.

Each $SG_{2}(m)$ contains a copy of $SG_{2}(1)$ in the lower right corner. Referring to Figure \ref{sierpinski}, define $h$ by $h(a)=h(b)=h(c)=-1$ where $a,b,c$ are the three inner vertices of this copy of $SG_2(1)$; and $h(v)=1$ for all other vertices $v$ of $SG_2(m)$. Then $h$ is multiplicative harmonic with associated eigenvalue $\lambda_{h}=1-6/|V(m)|$, hence the relaxation time is $\Omega\left(\left|V(m)\right|\right)$. Since $SG_2(m)$ has bounded degree, we conclude that $\trel = \Theta(|V(m)|)$.

Because large eigenvalues so often arise in this fashion, it is useful to introduce the following definition.

\begin{definition}
Let $G' = (V',E')$ be a vertex induced subgraph of $G$, and let $h'$ be a function from $V'$ to $\T$. We call $(G',h')$ a \emph{gadget} of size $m > 0$ in $G$ if:

\begin{enumerate}
\item $h'$ satisfies the geometric mean value property (with respect to $G'$) at every $v \in V'$.

\item The \emph{interior} $\interior(V') = \{v \in V' \st h'(v) \neq 1\}$ has size $m$.

\item The \emph{boundary} $\partial V' = V' \setminus \interior(V')$ contains the subgraph boundary \\
$\{v' \in V' \st \{v',u\} \in E \text{ for some } u \in V \setminus V' \}$.
\end{enumerate}
\end{definition}

Often we refer to the gadget $(G', h')$ simply as $G'$ for convenience. For example, the Sierpi\'{n}ski gasket graph $SG_2(m)$ has a gadget $SG_{2}(1)$ of size $\left|\{a,b,c\}\right|=3$.

\begin{prop}
\label{gadget-prop}
If $(G',h')$ is a gadget of size $m$ in $G$, then $\gamma_d \leq \frac{2m}{n}$.
\end{prop}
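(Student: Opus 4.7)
The plan is to use the gadget data to construct an explicit multiplicative harmonic function $h \in \HH$ on $G$ whose corresponding eigenvalue $\lambda_h$ is close to $1$, then invoke Theorem~\ref{sandpile_eigvals}. The natural candidate is to extend $h'$ by $1$ outside $V'$: define $h: V \to \T$ by
\[
h(v) = \begin{cases} h'(v), & v \in V', \\ 1, & v \in V \setminus V'. \end{cases}
\]
Clearly $h \not\equiv 1$ since $\interior(V')$ is nonempty, and (assuming as is standard that the sink $s \notin \interior(V')$) we have $h(s)=1$.

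The key verification is that $h$ satisfies the geometric mean value property at every $v \in V$. I would check the three cases separately. For $v \in \interior(V')$, condition (3) in the definition of a gadget forces every $G$-neighbor of $v$ to lie in $V'$, so the neighborhood of $v$ in $G$ coincides with its neighborhood in $G'$; the GMVP for $h$ at $v$ then reduces to the assumed GMVP for $h'$ at $v$ in $G'$. For $v \in \partial V'$, we have $h(v)=1$, so we must show $\prod_{u \sim_G v} h(u) = 1$. Splitting this product into $G'$-neighbors and neighbors in $V \setminus V'$, the latter contribute $1$ since $h \equiv 1$ off $V'$, and the former contribute $1$ by the GMVP of $h'$ at $v$ in $G'$ combined with $h'(v)=1$. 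For $v \in V \setminus V'$, again $h(v)=1$, and any $G$-neighbor of $v$ lying in $V'$ must be on the subgraph boundary of $V'$, hence in $\partial V'$ by condition (3), so $h$ is identically $1$ on the neighborhood of $v$ and the GMVP holds trivially. Thus $h \in \HH$.

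With this in hand, the eigenvalue computation is immediate:
\[
\lambda_h = \frac{1}{n}\sum_{v \in V} h(v) = \frac{1}{n}\Bigl[(n-m) + \sum_{v \in \interior(V')} h(v)\Bigr],
\]
and since each $|h(v)| = 1$, the triangle inequality gives
\[
|\lambda_h| \geq \frac{(n-m) - m}{n} = 1 - \frac{2m}{n}
\]
whenever $n \geq 2m$ (and the conclusion $\gamma_d \leq 2m/n$ is vacuous otherwise, as $\gamma_d \leq 1$). Therefore $\gamma_d \leq 1 - |\lambda_h| \leq 2m/n$, as required.

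The main obstacle, such as it is, is the GMVP verification at $\partial V'$ and outside $V'$, which is where condition (3) of the gadget definition is essential: without the requirement that all vertices of $V'$ adjacent to $V \setminus V'$ lie in $\partial V'$ (where $h'=1$), the extension by $1$ would fail to be multiplicative harmonic. Once this bookkeeping is done, the rest is just the triangle inequality.
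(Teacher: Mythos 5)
Your proof is correct and follows the same approach as the paper: place the sink outside $\interior(V')$, extend $h'$ by $1$ to obtain $h \in \HH(G)$, and bound $|\lambda_h|$. The paper merely asserts that the extension is multiplicative harmonic and that $|\lambda_h| \geq \Re(\lambda_h) \geq 1 - 2m/n$; you fill in the case-by-case GMVP verification and use the reverse triangle inequality instead of the real-part estimate, which are equivalent. One small remark: the assumption $s \notin \interior(V')$ is not just ``standard'' but is justified by the fact (Proposition~\ref{sink_change} and the surrounding discussion) that $\gamma_d$ does not depend on the choice of sink; the paper makes this explicit and you should too.
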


\begin{proof}
Since $\gamma_d$ is independent of the choice of sink vertex, we may assume that $s \notin \interior(V')$. Define $h: V \to \T$ by $h \equiv h'$ on $V'$ and $h \equiv 1$ on $V \setminus V'$. It is easily checked that $h \in \HH(G)$ and $|\lambda_h| \geq \Re(\lambda_h) \geq 1 - \frac{2m}{n}$.
\end{proof}

The next example shows that a gadget can have size as small as $2$. Suppose that $v_{1},v_{2}\in \V$ have common neighborhood $N=\{u\in V:u\sim v_{1}\}=\{u\in V:u\sim v_{2}\}$ with 
$d=\left|N\right|>1$. Then the induced subgraph with vertex set $V' = \interior(V') \cup \partial V'$, $\interior(V') = \{v_1,v_2\}$, $\partial V' = N$, is a gadget of size $2$ in $G$. 
Indeed, if $\omega$ is any nontrivial $d$th root of unity, then the function $h:V\to \T$ given by 
\[ 
h(v)=\left\{ \begin{array}{cc}
\omega, & v=v_{1}\\
\omega^{-1}, & v=v_{2}\\
1, & \textrm{else}
\end{array}\right.
\]
is multiplicative harmonic on $G$. Taking $\omega = e^{2\pi i/d}$, the eigenvalue corresponding to $h$ is $\lambda_{h}=1-\frac{2}{n}\left(1-\cos\left(\frac{2\pi}{d}\right)\right)$. Thus, in any graph on $n$ vertices, two of which have the same neighborhood of size $d$, the spectral gap of the discrete time sandpile chain has order at most $1 / (d^2 n)$. 

The common neighborhood gadget can also be understood from a more probabilistic perspective. The eigenfunction corresponding to $h$ gives information about the difference mod $d$ between the number of chips at $v_{1}$ and $v_{2}$. For the chain to equilibrate, it must run long enough for this mod $d$ difference to randomize. Since $v_{1}$ and $v_{2}$ have all neighbors in common, the mod $d$ difference is invariant under toppling; it changes only when a chip is added at $v_1$ or $v_2$.

Small gadgets can drastically affect the mixing time of the sandpile chain. For example, the sandpile chain on the cycle $C_{n}$ mixes completely after a single step (see Section \ref{Cycle graph}); but if we add just two extra vertices $u$, $w$ and $2d$ extra edges $\{u,v_{j}\}$ and $\{w,v_{j}\}$, $j=1,\ldots,d$ for some $d \geq 2$, 
then the relaxation time of the sandpile chain becomes 
$\Omega(d^{2}n)$.

The following theorem provides one way to rule out the presence of a small gadget.

\begin{theorem}
\label{gadget_girth}
If $G$ has girth $g$, then all gadgets in $G$ have size at least $g/2$.
\end{theorem}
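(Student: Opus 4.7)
The plan is strong induction on the gadget size $m$. Writing $h'(v) = e^{2\pi i \phi(v)}$ with $\phi : V' \to \R/\Z$, the geometric mean value property becomes $\deg_{G'}(v)\,\phi(v) \equiv \sum_{u \sim_{G'} v} \phi(u) \pmod{1}$, with $\phi \equiv 0$ on $\partial V'$ and $\phi(v) \neq 0$ on $I := \interior(V')$. Suppose for contradiction that $m < g/2$. Two preliminary observations drive the argument. First, the GMVP at any boundary vertex $u$ with only one interior $G'$-neighbor $v$ would force $\phi(v) \equiv 0$, contradicting $v \in I$; hence every vertex in $\partial V'_1 := \{u \in \partial V' : N_{G'}(u) \cap I \neq \emptyset\}$ has at least two interior neighbors. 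Second, let $B$ be the bipartite graph on $I \sqcup \partial V'_1$ whose edges are the $G$-edges across. A cycle in $B$ of length $2k$ is a cycle in $G$, so $2k \geq g$ and $k \geq g/2$; but $k \leq |I| = m$, so $B$ must be a forest.

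A standard degree count — the edge count of $B$ is at least $2|\partial V'_1|$ but at most $|I| + |\partial V'_1| - 1$ — gives $m' := |\partial V'_1| \leq m - 1$. Let $G_I^+ := G[I] \cup B$, a subgraph of $G$ on $m + m' \leq 2m - 1 < g$ vertices. Any cycle in $G_I^+$ would be a cycle in $G$ of length $\geq g$, which cannot fit in fewer than $g$ vertices; hence $G_I^+$ is itself a forest. It is nonempty (since $I \neq \emptyset$), so it has a leaf. Each boundary vertex of $G_I^+$ has degree $\geq 2$ there, so the leaf must be an interior vertex $v$; and because $v \in I$ has all $G$-neighbors in $I \cup \partial V'_1$, we have $\deg_G(v) = \deg_{G_I^+}(v) = 1$, so $v$ is a pendant of $G$. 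Its unique $G$-neighbor $w$ must also lie in $I$, for otherwise the GMVP at $v$ would give $\phi(v) \equiv \phi(w) = 0$.

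To close the induction, I delete $v$ from both $G$ and $V'$ and claim that $(G' - v,\, h'|_{V' \setminus \{v\}})$ is a gadget of size $m-1$ in $G - v$. The GMVP at the reduced-degree vertex $w$ remains valid because the GMVP at $v$ in the original gadget gives $\phi(v) \equiv \phi(w) \pmod{1}$, so removing the term $\phi(v)$ from the right-hand side of the GMVP at $w$ exactly cancels the drop in $\deg(w)$ on the left. Condition (3) of the gadget definition is preserved because $v$ was the only $G$-neighbor of $w$ lying outside $V' \setminus \{v\}$, and it is deleted from the ambient graph at the same time. Since pendants do not lie on cycles, $\mathrm{girth}(G - v) = g$, so the inductive hypothesis applies and yields $m - 1 \geq g/2$, contradicting $m < g/2$. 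The base case $m = 1$ is immediate: connectedness of $G$ forces the lone interior vertex to have a $G'$-neighbor in $\partial V'$, and the first observation then forces $\phi$ to vanish on it. The main obstacle is justifying the pendant-removal step carefully — in particular, verifying condition (3) in $G - v$ and the new GMVP at $w$ — but both follow from the pendant property and the identity $\phi(v) = \phi(w)$.
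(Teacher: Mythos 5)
Your proof is correct, but it follows a genuinely different route from the paper's. You induct on the gadget size $m$ and explicitly peel off pendant vertices of $G$; the paper instead takes a minimal counterexample over the vertex set of $G$, which lets it assume in one line that $G$ has minimum degree at least $2$ and avoid the pendant-deletion machinery entirely (together with the verification that the three gadget axioms survive the deletion). Your auxiliary objects are the bipartite forest $B$ between the interior $I$ and the active boundary, plus its union $G_I^+$ with $G[I]$; the paper collapses both kinds of closeness into a single graph $K$ on the interior $W$ alone, declaring $a \sim_K b$ whenever $a,b$ are $G$-adjacent or share a common neighbor outside $W$. Both constructions encode the same heuristic---the interior vertices must be spread out in $G$, since packing them closely creates a cycle of length at most $2|W| < g$---but the paper's $K$ makes the endgame immediate: a vertex $a \in W$ of $K$-degree at most one has, by the minimum-degree assumption and the GMVP at boundary neighbors, two $G$-neighbors $v,v' \notin W$ both adjacent to $a$'s unique $K$-neighbor $b$, producing a forbidden $4$-cycle. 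Your approach trades the extremal argument for induction, at the cost of the extra bookkeeping ($|\partial V'_1| \leq m-1$ via the forest $B$, locating a leaf of $G_I^+$, showing it is a pendant of $G$ with interior neighbor, and checking all three gadget conditions in $G - v$); all of these steps check out, including the crucial identity $\phi(v) = \phi(w)$ that repairs the GMVP at $w$ after deletion. Both proofs are sound.
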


\begin{proof}
We note at the outset that the relevant definitions preclude the existence of gadgets of size $1$, so the theorem is vacuously true when $g\leq 4$.
When $g \geq 5$, assume for the sake of contradiction that $G=(V,E)$ is a counterexample having vertex set of minimum size. Let $(G',h')$ be a gadget in $G$ whose interior $W$ has size $|W| < g/2$, and let $h$ be the extension of $h'$ to $V$ by setting $h \equiv 1$ on $V \setminus V'$, so that $h$ satisfies the geometric mean value property at every $v \in V$. Any vertex of degree $1$ could be deleted without affecting the girth of $G$ or the value of $h$ at the other vertices, so by minimality, every vertex of $G$ has degree at least $2$.

We now define a new graph $K$ on the vertex set $W$. We draw an edge between the distinct vertices $a,b \in W$ if $\{a,b\} \in E$ or if there exists $c\in V\setminus W$ with $\{a,c\},\{b,c\}\in E$. Since any cycle in $K$ would give rise to a cycle in $G$ of length at most $2\left|W\right|$, the graph $K$ has no cycles. Therefore there is $a \in W$ such that $\deg_K(a) = 0$ or $1$.

Since $\deg_G(a) \geq 2$, the vertex $a$ has at least one $G$-neighbor $v \in V \setminus W$. If $v$ has no other neighbors in $W$, then $1 = h(v)^{\deg(v)} = \prod_{w \sim v} h(w) = h(a)$, a contradiction. Therefore $v$ is adjacent to some $b \in W$, which must be the unique $K$-neighbor of $a$. The edge $\{a,b\} \notin E$ because $G$ has no triangles. Thus $a$ has no $G$-neighbors in $W$, and it must have another $G$-neighbor $v' \in V \setminus W$. By the reasoning used for $v$, $v'$ is also adjacent to $b$. But now the edges $\{a,v\},\{v,b\},\{b,v'\},\{v',a\}$ form an illegal $4$-cycle in $G$. This contradiction proves the theorem.
\end{proof}

\section{Dual Lattices}
\label{Dual Lattices}

In Section \ref{The Sandpile Chain} we saw that the sandpile chain is a random walk on the lattice quotient 
	$ \Gamma = \Z^{n-1} / \Delta \Z^{n-1}$ .  
Its character group $\widehat{\Gamma}$ is naturally isomorphic to 
	\[ \widehat{\Gamma} \cong (\Delta^{-1} \Z^{n-1}) / \Z^{n-1}. \] 
We refer to $\Delta^{-1} \Z^{n-1}$ as the \emph{dual Laplacian lattice} because it equals $(\Delta \Z^{n-1})^\ast = \{x \in \R^{n-1} \st \langle x,y \rangle \in \Z \text{ for all } y \in \Delta \Z^{n-1} \}$. Since the group $\HH$ of multiplicative harmonic functions is isomorphic to $\widehat{\Gamma}$ (see Lemma \ref{irr_chars}), each $h \in \HH$ can be identified with an equivalence class $x_h + \Z^{n-1} \subseteq \Delta^{-1} \Z^{n-1}$ of dual lattice vectors.

Given $h \in \HH$, choose $x_h \in \Delta^{-1} \Z^{n-1}$ of minimal Euclidean length that corresponds to $h$. The first main result in this section, Theorem \ref{dual-length-1}, relates the length $\|x_h\|_2$ to the eigenvalue $\lambda_h = \frac{1}{n} \sum_{v \in V} h(v)$ of the sandpile chain. Specifically, the length $\|x_h\|_2$ determines the gap $1 - \Re(\lambda_h)$ up to a constant factor. This property lets us translate information about the lengths of dual lattice vectors into information about the convergence of the \emph{continuous time} sandpile chain.

What about the discrete time sandpile chain? As it turns out, we get parallel results to the continuous case if we use a slightly different dual lattice, constructed using the \emph{pseudoinverse} of the full Laplacian matrix $\overbar{\Delta}$. The pseudoinverse construction leads to a quick proof of Theorem \ref{t.inverse.intro}, which states that if the spectral gap of $\overbar{\Delta}$ is large, then the spectral gap of the discrete time sandpile chain is small.

This section is organized as follows. Section \ref{Discrete and continuous time} provides preliminary details about the discrete and continuous time sandpile chains.  Sections \ref{Dual lattice: Continuous time} and \ref{Dual lattice: Discrete time} construct the two dual lattices and prove the correspondence between dual lattice vector lengths and sandpile chain eigenvalues. Section \ref{An inverse relationship} proves the inverse relationship (Theorem \ref{t.inverse.intro}) and uses it to determine the order of the sandpile chain spectral gap for families of expander graphs.

\subsection{Discrete and continuous time}
\label{Discrete and continuous time}

In this subsection we compare the discrete time sandpile chain with its continuous time analogue. In discrete time, the mixing properties are essentially independent of the choice of sink vertex, while in continuous time, the choice of sink can affect the mixing by up to a factor of $n$.

Recall that the transition matrix $P$ for the discrete time sandpile chain has eigenvalues $\{\lambda_h : h \in \HH\}$, where $\lambda_h = \frac{1}{n} \sum_{v \in V} h(v)$.
The continuous time chain, which proceeds by dropping chips according to a rate $1$ Poisson process, has kernel 
\[
H^{t}(\eta,\sigma) = \sum_{k=0}^{\infty}e^{-t}\frac{t^k}{k!}P^{k}(\eta,\sigma) = e^{-t(I-P)}(\eta,\sigma).
\]
The eigenvalues of $H^t$ are $\{ e^{-t(1 - \lambda_h)} : h \in \HH \}$.

If the chains are started from the identity configuration, then after time $t$ the $L^2$ distances from the uniform distribution $U$ are
\begin{align*}
\| P_{id}^t - U \|_2^2 &= \sum_{h \in \HH \setminus \{1\} } |\lambda_h|^{2t}, \\
\| H_{id}^t - U \|_2^2 &= \sum_{h \in \HH \setminus \{1\}} \left| e^{-t(1 - \lambda_h)} \right|^2 = \sum_{h \in \HH \setminus \{1\}} e^{-2t(1 - \Re(\lambda_h))}.
\end{align*}

The mixing of the discrete time chain is controlled by the eigenvalues with modulus close to $1$, while the mixing of the continuous time chain is controlled by the eigenvalues with real part close to $1$. This motivates the definitions for the discrete and continuous time spectral gaps given in Section \ref{Introduction}:
\begin{align*}
\gamma_d &= \min\{ 1 - |\lambda_h| : h \in \HH, \, h \not\equiv 1 \}, \\
\gamma_c &= \min\{ 1 - \Re(\lambda_h) : h \in \HH, \, h \not\equiv 1 \}.
\end{align*}

\begin{prop} 
\label{gamma-cd-bound}
Every eigenvalue $\lambda_h$ of the discrete time sandpile chain satisfies
\[
1 - |\lambda_h| \leq 1 - \Re(\lambda_h) \leq n(1 - |\lambda_h|).
\]
Hence $\gamma_d \leq \gamma_c \leq n \gamma_d$.
\end{prop}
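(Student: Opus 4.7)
The plan is to prove the two inequalities separately, with the left one being immediate and the right one coming from a constraint that $\lambda_h$ must satisfy because $h(s)=1$.

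For the first inequality $1-|\lambda_h|\leq 1-\Re(\lambda_h)$, I would simply invoke the universal bound $\Re(z)\leq |z|$ for any complex number $z$. No structure of $\HH$ is needed here.

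For the second inequality $1-\Re(\lambda_h)\leq n(1-|\lambda_h|)$, the key observation I would exploit is that $h(s)=1$ for every $h\in\HH$. This lets me rewrite
\[
n\lambda_h - 1 = \sum_{v\in V\setminus\{s\}} h(v),
\]
and since each $h(v)$ lies on the unit circle, the triangle inequality gives $|n\lambda_h-1|\leq n-1$. In other words, $\lambda_h$ is constrained to lie inside a specific disk in $\C$. I would then expand this constraint as
\[
|n\lambda_h-1|^2 = n^2|\lambda_h|^2 - 2n\Re(\lambda_h) + 1 \leq (n-1)^2,
\]
and rearrange to obtain
\[
2(1-\Re(\lambda_h)) \leq n(1-|\lambda_h|^2) = n(1-|\lambda_h|)(1+|\lambda_h|).
\]
Since $|\lambda_h|\leq 1$ (also immediate from the triangle inequality applied to the full sum), the factor $1+|\lambda_h|$ is bounded by $2$, which cancels the factor of $2$ on the left and yields the desired inequality. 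The final statement $\gamma_d\leq\gamma_c\leq n\gamma_d$ follows by minimizing each side over $h\in\HH\setminus\{1\}$.

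I don't foresee any real obstacle here; the only substantive step is recognizing that pinning down $h(s)=1$ translates into the geometric constraint $|n\lambda_h-1|\leq n-1$, after which the proof is just algebra. The factor of $n$ in the upper bound is tight in the worst case precisely because only the single sink coordinate is forced to equal $1$; the other $n-1$ values of $h(v)$ can conspire to place $\lambda_h$ anywhere in the disk of radius $(n-1)/n$ centered at $1/n$.
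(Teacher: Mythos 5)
Your proof is correct and is essentially the same as the paper's: both use $h(s)=1$ to isolate the constraint that $\sum_{v\neq s}h(v)$ has modulus at most $n-1$ (the paper writes this as $\lambda_h = \frac{1}{n}[1+(n-1)z]$ with $|z|\leq 1$), then expand $|\lambda_h|^2$ to obtain $2(1-\Re(\lambda_h)) \leq n(1-|\lambda_h|^2)$ and finish with $1+|\lambda_h|\leq 2$. Your phrasing in terms of the disk $|n\lambda_h-1|\leq n-1$ is a slightly cleaner geometric presentation of the same algebra.
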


\begin{proof}
The lower bound is immediate. For the upper bound, since $h(s) = 1$, we can write
\[
\lambda_h = \frac{1}{n}[1 + (n-1)z],
\]
where $z = \frac{1}{n-1} \sum_{v \in \widetilde{V}} h(v)$ satisfies $|z| \leq 1$. Therefore
\[
1 - \Re(\lambda_h) = \frac{n-1}{n} \left[1 - \Re(z) \right]
\]
and
\[
|\lambda_h|^2 = \frac{1}{n^2} \left[ 1 + 2(n-1) \Re(z) + (n-1)^2 |z|^2 \right].
\]
Using that $|z|^2 \leq 1$,
\[
1 - |\lambda_h|^2 \geq \frac{2(n-1)}{n^2} [1 - \Re(z)] = \frac{2}{n} \left[ 1 - \Re(\lambda_h) \right].
\]
Hence
\[
1 - \Re(\lambda_h) \leq \frac{n}{2} (1 + |\lambda_h|)(1 - |\lambda_h|) \leq n(1 - |\lambda_h|). \qedhere
\]
\end{proof}

We have seen that the magnitudes $|\lambda_{h}|$, and thus the values of $\gamma_d$ and $\|P_{id}^{k}-U\|_{2}$, do not depend on the location of the sink. By contrast, the choice of sink can affect the values of $\gamma_c$ and $\| H_{id}^t - U \|_2$. Proposition \ref{gamma-cd-bound} shows that the value of $\gamma_c$ cannot vary by more than a factor of $n$. In Section \ref{Discrete vs Continuous}, we will see two examples where moving the sink changes $\gamma_c$ by a factor of roughly $n/2$.

\subsection{Dual lattice: Continuous time}
\label{Dual lattice: Continuous time}

In this subsection, we define the first of two dual lattices and describe the correspondence between multiplicative harmonic functions and dual lattice vectors. Then we show the relationship between lengths of dual lattice vectors and eigenvalues of the continuous time sandpile chain. 

\begin{prop} 
\label{dual-lattice-1}
The map from $\Delta^{-1} \Z^{n-1} \to \HH$ given by
\begin{equation} \label{x-to-h}
x = (x_1,\ldots,x_{n-1}) \mapsto h(v_j) = \begin{cases} e^{2\pi i x_j} & \text{if } 1 \leq j \leq n-1, \\
1 & \text{if } j = n, \end{cases}
\end{equation}
is a surjective homomorphism with kernel $\Z^{n-1}$. Therefore $\HH \cong \Delta^{-1} \Z^{n-1} \big/ \Z^{n-1}$.
\end{prop}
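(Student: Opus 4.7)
The plan is to verify the three required properties (well-definedness, kernel, surjectivity) by translating the geometric mean value property into the lattice condition $\Delta x \in \Z^{n-1}$, exploiting the convention $h(s)=1$.

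First I would show the map is well-defined, i.e.\ that the function $h$ assigned to $x$ really lies in $\HH$. By construction $h$ takes values in $\T$ and $h(s)=1$, so only the geometric mean value property at each non-sink vertex $v_j$ needs checking. Writing $\tilde x \in \R^n$ for the extension of $x$ by $\tilde x_n = 0$, the quantity $\frac{1}{2\pi i}\log\big(h(v_j)^{\deg(v_j)} \prod_{v_k \sim v_j} h(v_k)^{-1}\big)$ equals $(\overbar{\Delta}\tilde x)_j$; since the $n$-th coordinate of $\tilde x$ is zero, this coincides with $(\Delta x)_j$, which lies in $\Z$ because $x\in\Delta^{-1}\Z^{n-1}$. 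Hence the ratio equals $1$, giving the GMVP at every non-sink vertex, and then Remark~\ref{GMVP-all-but-one} upgrades this to GMVP at the sink as well.

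Next, the homomorphism property is immediate from the fact that $t\mapsto e^{2\pi i t}$ is a homomorphism $\R \to \T$ in each coordinate. The kernel consists of those $x$ with $e^{2\pi i x_j} = 1$ for all $j \leq n-1$, which is exactly $\Z^{n-1}$.

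For surjectivity, given $h \in \HH$ I would choose, for each $j \leq n-1$, any lift $x_j \in \R$ with $e^{2\pi i x_j} = h(v_j)$, forming $x = (x_1,\ldots,x_{n-1})$. The GMVP for $h$ at $v_j$ says that $\deg(v_j) x_j - \sum_{v_k \sim v_j,\, k \leq n-1} x_k \in \Z$ (the term for $v_k = s$ drops out since $h(s)=1$ lifts to an integer, and shifting any $x_k$ by an integer changes the left side by an integer). This is precisely $(\Delta x)_j \in \Z$, so $\Delta x \in \Z^{n-1}$ and therefore $x \in \Delta^{-1}\Z^{n-1}$. By construction $x$ maps to $h$. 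The first isomorphism theorem then gives $\HH \cong \Delta^{-1}\Z^{n-1}/\Z^{n-1}$.

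The only mild subtlety — and the step I would be most careful about — is the bookkeeping between the full Laplacian $\overbar{\Delta}$ acting on $\R^n$ and the reduced Laplacian $\Delta$ acting on $\R^{n-1}$, together with the freedom of choice in the logarithm lifts. The convention $h(s) = 1$ (equivalently $\tilde x_n \in \Z$) is exactly what makes $(\overbar{\Delta}\tilde x)_j$ agree with $(\Delta x)_j$ modulo $\Z$ for $j \leq n-1$, and thus makes the two conditions ``$\Delta x \in \Z^{n-1}$'' and ``$h$ satisfies GMVP on $\widetilde V$'' equivalent. Once this dictionary is in hand, everything else is essentially automatic.
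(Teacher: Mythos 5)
Your proof is correct and takes essentially the same route as the paper: both reduce the geometric mean value property to the lattice condition $\Delta x \in \Z^{n-1}$ via the identity $h(v_j)^{\deg(v_j)}\prod_{v_k\sim v_j}h(v_k)^{-1}=e^{2\pi i(\Delta x)_j}$, using $h(s)=1$ to drop the sink term, and then invoke the remark that GMVP on $\widetilde V$ implies GMVP at the sink. Your detour through the extension $\tilde x$ and the full Laplacian $\overbar{\Delta}$ is just a notational variant of the paper's direct computation with the reduced Laplacian, and your separate well-definedness and surjectivity arguments together establish the same two-sided equivalence that the paper states as a single ``if and only if.''
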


We treat $x$ as a column vector; the notation $x = (x_1, \ldots, x_{n-1})$ is purely for convenience.

\begin{proof}
Suppose $x \in \R^{n-1}$ maps to $h$ by \eqref{x-to-h}. We claim that $h \in \HH$ if and only if $\Delta x \in \Z^{n-1}$. Indeed, $h$ satisfies the geometric mean value property at $v_j$ if and only if
\[
1 = h(v_j)^{\deg(v_j)} \prod_{v_k \sim v_j} h(v_k)^{-1} = e^{2\pi i (\Delta x)_j},
\]
where the second equality holds because $h(s) = 1$. Therefore $\Delta x \in \Z^{n-1}$ if and only if $h$ satisfies the geometric mean value property at every non-sink vertex (which implies the geometric mean value property at the sink).

It follows that \eqref{x-to-h} defines a surjective homomorphism from $\Delta^{-1} \Z^{n-1}$ to $\HH$. It is immediate from the definition that the kernel is $\Z^{n-1}$.
\end{proof}

\begin{remark}
\label{G-H-correspondence}
The isomorphism $\HH \cong \Delta^{-1} \Z^{n-1} \big/ \Z^{n-1}$ is the dual version of the isomorphism $\G \cong \Z^{n-1} \big/ \Delta \Z^{n-1}$. Although finite abelian groups are non-canonically isomorphic to their duals, in this case there is a natural map from $\Z^{n-1} \big/ \Delta \Z^{n-1}$ to $\Delta^{-1} \Z^{n-1} \big/ \Z^{n-1}$, namely multiplication by $\Delta^{-1}$. The corresponding map from $\G$ to $\HH$ can be described as follows: Given a sandpile configuration $\eta$ viewed as an element of $\Z^{n-1}$, let $x = \Delta^{-1} \eta$. Set $h(v_j) = e^{2\pi i x_j}$ for all $1 \leq j \leq n-1$ and $h(s) = 1$. If $\eta$ and $\eta'$ are equivalent configurations (that is, $\eta - \eta' \in \Delta \Z^{n-1}$), then the resulting functions $h,h'$ will be equal.

When $G$ is a directed graph, $\Delta$ is not symmetric, and $\HH \cong \Delta^{-1} \Z^{n-1} / \Z^{n-1}$ whereas $\G \cong \Z^{n-1} \big/ \Delta^\transpose \Z^{n-1}$; these groups are still isomorphic, but not naturally. 
\end{remark}

Fix $h \in \HH$, and choose $|x_j| \leq 1/2$ such that $h(v_j) = e^{2\pi i x_j}$ for all $1 \leq j \leq n$. (In particular, $x_n = 0$.) The next theorem quantifies the following simple idea: If the eigenvalue
\[
\lambda_h = \frac{1}{n} \sum_{j=1}^{n} e^{2\pi i x_j}
\]
is close to $1$, then the vector $(x_1,\ldots,x_{n-1})$ should be close to $0$; more precisely, the squared length $\|x\|_2^2$ is within a constant factor of $1 - \Re(\lambda_h)$. 

\begin{theorem}
\label{dual-length-1}
Given $h \in \HH$, choose $x \in \Delta^{-1} \Z^{n-1}$ of minimal Euclidean length such that $x \mapsto h$ via \eqref{x-to-h}. Then
\[
8\frac{\|x\|_2^2}{n} \leq 1 - \Re(\lambda_h) \leq 2\pi^2 \frac{\|x\|_2^2}{n}.
\]
\end{theorem}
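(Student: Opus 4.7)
The plan is to reduce the statement to an elementary inequality about $1-\cos(2\pi x_j)$ vs.\ $x_j^2$, applied coordinatewise. The minimal-length hypothesis is what makes this work: by adding elements of $\Z^{n-1}$, we may (and must, at the minimum) shift each coordinate into the interval $[-1/2,1/2]$. So the first thing I would record is that the vector $x=(x_1,\ldots,x_{n-1})\in\R^{n-1}$ chosen in the theorem satisfies $|x_j|\leq 1/2$ for each $j$. (If some $|x_j|>1/2$, subtracting the nearest integer from $x_j$ yields another vector in the same coset $x+\Z^{n-1}$, hence still in $\Delta^{-1}\Z^{n-1}$, of strictly smaller length.) Setting $x_n=0$ in accord with $h(s)=1$, we then have $h(v_j)=e^{2\pi i x_j}$ for all $j$.

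Next I would rewrite the eigenvalue. From Theorem~\ref{sandpile_eigvals},
\[
1-\Re(\lambda_h)=\frac{1}{n}\sum_{j=1}^{n}\bigl(1-\cos(2\pi x_j)\bigr)=\frac{1}{n}\sum_{j=1}^{n-1}\bigl(1-\cos(2\pi x_j)\bigr),
\]
since the $j=n$ term vanishes. Both inequalities in the theorem thus reduce, term by term, to
\[
8\,x_j^{2}\;\leq\;1-\cos(2\pi x_j)\;\leq\;2\pi^{2}\,x_j^{2} \qquad\text{for }|x_j|\leq 1/2.
\]

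For the upper bound I would just cite the elementary estimate $1-\cos(y)\leq y^{2}/2$ (valid for all real $y$, e.g.\ by integrating $\sin$ twice), applied with $y=2\pi x_j$, yielding $1-\cos(2\pi x_j)\leq 2\pi^{2}x_j^{2}$. For the lower bound I would invoke Lemma~\ref{cos-upper-bound} with $r=\pi$: since $|2\pi x_j|\leq \pi$, the lemma gives $\cos(2\pi x_j)\leq 1 - \frac{1-\cos\pi}{\pi^{2}}(2\pi x_j)^{2}= 1 - \frac{2}{\pi^{2}}\cdot 4\pi^{2}x_j^{2}=1-8x_j^{2}$, i.e.\ $1-\cos(2\pi x_j)\geq 8x_j^{2}$. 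Summing over $j=1,\ldots,n-1$ and dividing by $n$ gives the two-sided bound of the theorem, using $\|x\|_2^{2}=\sum_{j=1}^{n-1}x_j^{2}$.

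There is no real obstacle here: once the minimality of $\|x\|_2$ is translated into the coordinatewise bound $|x_j|\leq 1/2$, everything reduces to the scalar comparison between $1-\cos y$ and $y^2$ on $|y|\leq\pi$, which is exactly what Lemma~\ref{cos-upper-bound} provides on the lower side and the standard Taylor estimate provides on the upper side. The only thing to be careful about is not to forget the extra constraint $x_n=0$ (so that the $n$th term contributes nothing and the sum on the right correctly involves only $n-1$ terms, matching $\|x\|_2^{2}$).
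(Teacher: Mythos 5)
Your proof is correct and follows essentially the same route as the paper's: both reduce to the coordinatewise inequality $8x_j^2 \leq 1 - \cos(2\pi x_j) \leq 2\pi^2 x_j^2$ for $|x_j| \leq 1/2$, using $1 - \cos(y) \leq y^2/2$ on one side and Lemma~\ref{cos-upper-bound} (in the form $\cos(t) \leq 1 - 2t^2/\pi^2$ for $|t| \leq \pi$) on the other. Your explicit justification that minimality of $\|x\|_2$ forces $|x_j| \leq 1/2$ is a small but welcome addition to what the paper leaves implicit.
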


\begin{proof}
We know that $|x_j| \leq 1/2$ for all $1 \leq j \leq n-1$, so $|2\pi x_j| \leq \pi$. We will use the inequality $1 - t^2\big/2 \leq \cos(t) \leq 1 - 2t^2 \big/ \pi^2$ for all $|t| \leq \pi$, where the upper bound is Lemma \ref{cos-upper-bound}. Since
\[
1 - \Re(\lambda_h) = \frac{1}{n} \sum_{j=1}^{n} [1 - \Re(h(v_j))] = \frac{1}{n} \sum_{j=1}^{n-1} [1 - \cos(2\pi x_j)],
\]
it follows that
\[
\frac{1}{n} \sum_{j=1}^{n-1} \frac{2}{\pi^2} (2\pi x_j)^2 \leq 1 - \Re(\lambda_h) \leq \frac{1}{n} \sum_{j=1}^{n-1} \frac{1}{2} (2\pi x_j)^2,
\]
which is the desired result.
\end{proof}

We can bound the continuous time spectral gap $\gamma_c$ by minimizing Theorem \ref{dual-length-1} over all $h \not\equiv 1$. Since $h \equiv 1$ corresponds to vectors $x \in \Z^{n-1}$, we immediately obtain the first statement in Theorem \ref{t.gap.intro}:

\begin{corollary}
\label{spec-gap-1}
The spectral gap of the continuous time sandpile chain satisfies
	\[ 8\frac{\|y\|_2^2}{n} \leq \gamma_c \leq 2\pi^2 \frac{\|y\|_2^2}{n}, \]
where $y$ is a vector of minimal Euclidean length in $(\Delta^{-1}\Z^{n-1}) \setminus \Z^{n-1}$.
\end{corollary}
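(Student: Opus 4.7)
The plan is to deduce Corollary \ref{spec-gap-1} as a straightforward minimization of Theorem \ref{dual-length-1} over nontrivial $h \in \HH$. By Proposition \ref{dual-lattice-1}, the nontrivial multiplicative harmonic functions correspond bijectively, via \eqref{x-to-h}, to the nonzero cosets of $\Z^{n-1}$ in $\Delta^{-1}\Z^{n-1}$; equivalently, to the cosets entirely contained in $(\Delta^{-1}\Z^{n-1}) \setminus \Z^{n-1}$, since a coset represented by a non-integer vector contains no integer vectors at all.

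For the lower bound I would proceed as follows. Fix any nontrivial $h \in \HH$ and let $x_h$ be a minimal Euclidean-length representative of its coset, as in Theorem \ref{dual-length-1}. Then $x_h$ is a vector in $(\Delta^{-1}\Z^{n-1}) \setminus \Z^{n-1}$, so by the definition of $y$ we have $\|x_h\|_2 \geq \|y\|_2$. The left-hand inequality of Theorem \ref{dual-length-1} then yields
\[
1 - \Re(\lambda_h) \;\geq\; 8\,\frac{\|x_h\|_2^2}{n} \;\geq\; 8\,\frac{\|y\|_2^2}{n}.
\]
Taking the minimum over all nontrivial $h$ gives $\gamma_c \geq 8\|y\|_2^2/n$.

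For the upper bound I would exhibit a single eigenvalue that already achieves the claim. Let $h^\ast$ be the image of $y$ under the map \eqref{x-to-h}; since $y \notin \Z^{n-1}$, we have $h^\ast \not\equiv 1$. Moreover, $y$ has minimum Euclidean length in the set $(\Delta^{-1}\Z^{n-1}) \setminus \Z^{n-1}$, and its own coset is a subset of that set, so $y$ is in particular a minimal-length representative of the coset corresponding to $h^\ast$. Applying the right-hand inequality of Theorem \ref{dual-length-1} with $x_{h^\ast} = y$, we obtain
\[
\gamma_c \;\leq\; 1 - \Re(\lambda_{h^\ast}) \;\leq\; 2\pi^2\,\frac{\|y\|_2^2}{n}.
\]

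There is no real obstacle; the corollary is a direct consequence of Theorem \ref{dual-length-1} once the two minimizations (over cosets, and over representatives within a coset) are interchanged. The only small point to verify carefully is the compatibility observation already noted, that vectors in $(\Delta^{-1}\Z^{n-1}) \setminus \Z^{n-1}$ partition cleanly into the nontrivial cosets, so that minimizing $\|x_h\|_2$ over nontrivial $h$ and over coset representatives is the same as minimizing $\|x\|_2$ over all $x \in (\Delta^{-1}\Z^{n-1}) \setminus \Z^{n-1}$.
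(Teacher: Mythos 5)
Your argument is correct and is exactly the paper's proof, just spelled out in more detail: the paper says "minimize Theorem \ref{dual-length-1} over all $h \not\equiv 1$," and you correctly unpack the two-stage minimization (over cosets, then over representatives within a coset) into the single minimization over $(\Delta^{-1}\Z^{n-1}) \setminus \Z^{n-1}$. The key compatibility observation you flag --- that a coset either lies entirely inside or entirely outside $\Z^{n-1}$ --- is precisely what makes the interchange legitimate.
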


There are cases in which the shortest vector in $\Delta^{-1} \Z^{n-1} \setminus \Z^{n-1}$ is much longer than the shortest nonzero vector in $\Delta^{-1} \Z^{n-1}$. For example, if $G$ is a cycle on $n$ vertices, then the length of the shortest vector in $\Delta^{-1} \Z^{n-1} \setminus \Z^{n-1}$ has order $\sqrt{n}$. But since $\Z^{n-1} \subseteq \Delta^{-1} \Z^{n-1}$, the lattice $\Delta^{-1} \Z^{n-1}$ contains vectors of length $1$.

\subsection{Dual lattice: Discrete time}
\label{Dual lattice: Discrete time}

To analyze the discrete time sandpile chain using $\HH \cong \Delta^{-1} \Z^{n-1} \big/ \Z^{n-1}$, we could argue as follows. The eigenvalue $\lambda_h$ is close to the unit circle when the values $h(v_j)$ are close to each other. This happens when the equivalence class in $\Delta^{-1} \Z^{n-1}$ associated with $h$ contains a vector $x$ close to the line $\{c \one : c \in \R \}$, where $\one$ denotes the all-ones vector. Instead of pursuing this approach, we start over with a different dual lattice construction that puts the sink on an equal footing with the other vertices. We will show that the analogue of $\Delta^{-1} \Z^{n-1}$ in this new construction is naturally associated with the discrete time sandpile chain in the same way that $\Delta^{-1} \Z^{n-1}$ is associated with the continuous time sandpile chain. Although the construction is somewhat involved, the reward---an easy proof of Theorem \ref{t.inverse.intro}---makes it worthwhile.

We first provide a sink-independent analogue of the group $\Z^{n-1} \big/ \Delta \Z^{n-1}$. Recall that $\overbar{\Delta}$ is the full $n$-dimensional Laplacian matrix of $G$. If $x \in \R^n$, then $\overbar{\Delta} x \in \R_0^n$. It can be checked that under the map from $\Z^{n-1}$ to $\Z_0^n$ that sends $(x_1,\ldots,x_{n-1}) \mapsto (x_1,\ldots,x_{n-1}, -\sum_{j=1}^{n-1} x_j)$, the image of $\Delta \Z^{n-1}$ is exactly $\overbar{\Delta} \Z^n$. Hence $\Z^{n-1} \big/ \Delta \Z^{n-1} \cong \Z_0^n \big/ \overbar{\Delta} \Z^n$. This isomorphism is used in \cite{HLMPPW} to prove that the sandpile group is independent of the choice of sink.

The analogue to $\Delta^{-1} \Z^{n-1}$ is the dual lattice $(\overbar{\Delta} \Z^n)^\ast = \{x \in \R_0^n \st \langle x,y \rangle \in \Z \text{ for all } y \in \overbar{\Delta} \Z^n\}$. We will see below that $(\overbar{\Delta} \Z^n)^\ast = \overbar{\Delta}^+ \Z_0^n$, where $\overbar{\Delta}^+$ is the \emph{Moore-Penrose pseudoinverse} of $\overbar{\Delta}$, an $n$-dimensional symmetric matrix which we now define.

Because the graph $G$ is connected, $\overbar{\Delta}$ has eigenvalue $0$ with multiplicity $1$, and its kernel is $\ker \overbar{\Delta} = \{ c \one : c \in \R \}$. The orthogonal subspace to $\ker \overbar{\Delta}$ is $\R_0^n$. Since $\overbar{\Delta}$ is symmetric, $\R_0^n$ is an invariant subspace for the map $x \mapsto \overbar{\Delta} x$. Indeed, this map is invertible when restricted to $\R_0^n$. Any vector in $\R^n$ can be written uniquely as $y + c\one$ for $y \in \R_0^n$ and $c \in \R$. The matrix $\overbar{\Delta}^+$ is defined by $\overbar{\Delta}^+ (y + c\one) = x$, where $x$ is the unique vector in $\R_0^n$ such that $\overbar{\Delta} x = y$. Thus the maps $x \mapsto \overbar{\Delta} x$ and $y \mapsto \overbar{\Delta}^+ y$ are inverses on $\R_0^n$. Also, $\ker \overbar{\Delta}^+ = \ker \overbar{\Delta}$, and the maps $x \mapsto \overbar{\Delta}^+ \overbar{\Delta} x$ and $y \mapsto \overbar{\Delta} \overbar{\Delta}^+ y$ on $\R^n$ are both the same as orthogonal projection onto $\R_0^n$. If the eigenvalues of $\overbar{\Delta}$ are $0 = \beta_0 < \beta_1 \leq \beta_2 \leq \cdots \leq \beta_{n-1}$, then the eigenvalues of $\overbar{\Delta}^+$ are $0 < \beta_{n-1}^{-1} \leq \beta_{n-2}^{-1} \leq \cdots \leq \beta_1^{-1}$.

To see why $(\overbar{\Delta} \Z^n)^\ast = \overbar{\Delta}^+ \Z_0^n$, suppose first that $x = \overbar{\Delta}^+ a$ for some $a \in \Z_0^n$. For any $b \in \Z^n$, $\langle x, \overbar{\Delta} b \rangle = \langle \overbar{\Delta} \overbar{\Delta}^+ a, b \rangle = \langle a,b \rangle \in \Z$. Thus $x \in (\overbar{\Delta} \Z^n)^\ast$. Conversely, if $x \in (\overbar{\Delta} \Z^n)^\ast$, then $x \in \R_0^n$ and $\langle \overbar{\Delta} x, e_j \rangle = \langle x, \overbar{\Delta} e_j \rangle \in \Z$ for all $1 \leq j \leq n$. This means $\overbar{\Delta} x \in \Z_0^n$, so $x \in \overbar{\Delta}^+ \Z_0^n$.

Here is the parallel to Proposition \ref{dual-lattice-1}.

\begin{prop} 
\label{dual-lattice-2}
The map from $\overbar{\Delta}^+ \Z_0^n \to \HH$ given by
\begin{equation} \label{x-to-h-2}
x = (x_1,\ldots,x_n) \mapsto h(v_j) = e^{2\pi i (x_j - x_n)}
\end{equation}
is a surjective homomorphism with kernel $\W^n$. Therefore $\HH \cong \overbar{\Delta}^+ \Z_0^n \big/ \W^n$.
\end{prop}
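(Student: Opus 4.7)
The plan is to verify each piece of the statement in turn: well-definedness of the map into $\HH$, the homomorphism property, surjectivity, and the identification of the kernel. The essential tool is the fact, recorded just before the proposition, that $\overbar{\Delta}^+ \overbar{\Delta}$ and $\overbar{\Delta}\,\overbar{\Delta}^+$ both act as the orthogonal projection onto $\R_0^n$, so in particular $\overbar{\Delta}\,\overbar{\Delta}^+ a = a$ for any $a \in \R_0^n$.

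First, I would show the map is well-defined. Writing $x = \overbar{\Delta}^+ a$ with $a \in \Z_0^n$, we have $\overbar{\Delta} x = a \in \Z^n$. A direct calculation at any vertex $v_j$ gives
\[
h(v_j)^{\deg(v_j)} \prod_{v_k \sim v_j} h(v_k)^{-1} = e^{2\pi i [\deg(v_j)(x_j - x_n) - \sum_{v_k \sim v_j}(x_k - x_n)]} = e^{2\pi i (\overbar{\Delta} x)_j} = 1,
\]
where the $x_n$ contributions cancel because vertex $v_j$ has exactly $\deg(v_j)$ neighbors; together with $h(v_n) = 1$ this places $h$ in $\HH$. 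The homomorphism property is immediate from the additivity of $x \mapsto x_j - x_n$.

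For surjectivity, given $h \in \HH$, I would use Proposition~\ref{dual-lattice-1} to obtain $\tilde x \in \Delta^{-1}\Z^{n-1}$ with $h(v_j) = e^{2\pi i \tilde x_j}$ for $j \leq n-1$, extend to $\hat x \in \R^n$ by setting $\hat x_n = 0$, and project onto $\R_0^n$ via $y = \hat x - \bar x \one$ with $\bar x = \frac{1}{n}\sum_j \hat x_j$. Then $y_j - y_n = \hat x_j$ for every $j$, so $y$ maps to $h$. To place $y$ in $\overbar{\Delta}^+ \Z_0^n$, note $\overbar{\Delta} y = \overbar{\Delta} \hat x$ (since $\one \in \ker \overbar{\Delta}$); its first $n-1$ coordinates equal $(\Delta \tilde x)_j \in \Z$ by construction, and the last coordinate is an integer for free because $\overbar{\Delta} \hat x \in \R_0^n$. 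Hence $\overbar{\Delta} y \in \Z_0^n$, and applying $\overbar{\Delta}^+$ returns $y$.

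For the kernel, the condition $h \equiv 1$ translates to $x_j - x_n \in \Z$ for all $j$. Given such $x \in \overbar{\Delta}^+ \Z_0^n$, the vector $z := x - x_n \one$ lies in $\Z^n$ with $\bar z = -x_n$ (using $x \in \R_0^n$), so its orthogonal projection onto $\R_0^n$ is $z - \bar z \one = x$, placing $x \in \W^n$. Conversely, any $w = z - \bar z \one \in \W^n$ satisfies $w_j - w_n = z_j - z_n \in \Z$ and $\overbar{\Delta} w = \overbar{\Delta} z \in \Z_0^n$ (integer entries with row-sum zero), and since $w \in \R_0^n$ we recover $w \in \overbar{\Delta}^+ \Z_0^n$. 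The first isomorphism theorem then delivers $\HH \cong \overbar{\Delta}^+ \Z_0^n / \W^n$. Nothing here looks genuinely hard; the only place to be careful is bookkeeping for the sink coordinate $x_n$ in the well-definedness and surjectivity steps.
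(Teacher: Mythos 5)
Your proof is correct and follows essentially the same route as the paper's: verify the geometric mean value property via $(\overbar{\Delta}x)_j \in \Z$, use orthogonal projection onto $\R_0^n$ together with the fact that $\overbar{\Delta}^+\overbar{\Delta}$ is that projection, and identify the kernel by the translate-by-$x_n\one$ trick. The one small organizational difference is in the surjectivity step: the paper records up front that $h(v_j)=e^{2\pi i y_j}$ lies in $\HH$ iff $y_n\in\Z$ and $\overbar{\Delta}y\in\Z_0^n$, then takes an arbitrary such lift $y$ and projects it, whereas you detour through Proposition~\ref{dual-lattice-1} to produce the specific lift with $\hat{x}_n = 0$ and then argue coordinate-by-coordinate that $\overbar{\Delta}\hat{x}\in\Z_0^n$. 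Both work; the paper's version is slightly more self-contained since it doesn't lean on the previously established isomorphism, but yours is equally valid.
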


\begin{proof}
We first observe that the function $h(v_j) = e^{2\pi i y_j}$ is in $\HH$ if and only if the vector $y = (y_1,\ldots,y_n)$ satisfies $y_n \in \Z$ and $\overbar{\Delta} y \in \Z_0^n$. This is because the geometric mean value property at $v_j$ is the statement
\[
1 = h(v_j)^{\deg(v_j)} \prod_{v_k \sim v_j} h(v_k)^{-1} = e^{2\pi i (\overbar{\Delta} y)_j}.
\]
Note that $\overbar{\Delta} y \in \Z_0^n$ if and only if $\overbar{\Delta} y \in \Z^n$ since the columns of $\overbar{\Delta}$ sum to zero. The condition $y_n \in \Z$ is equivalent to $h(s) = 1$.

Let $x \in \overbar{\Delta}^+ \Z_0^n$ and define $h$ by \eqref{x-to-h-2}. We write $h(v_j) = e^{2\pi i y_j}$, where $y = x - x_n \one$. Then $y_n = 0$ and $\overbar{\Delta} y = \overbar{\Delta} x \in \overbar{\Delta} \overbar{\Delta}^+ \Z_0^n = \Z_0^n$, implying that $h \in \HH$.

It is clear that the map \eqref{x-to-h-2} is a homomorphism. To see that it is surjective, take any $h \in \HH$. Let $h(v_j) = e^{2\pi i y_j}$, so that $\overbar{\Delta} y \in \Z_0^n$. Let $c = \frac{1}{n}(y_1 + \cdots + y_n)$ and define $x = y - c \one$. Then $x \in \R_0^n$ and $\overbar{\Delta} x = \overbar{\Delta} y \in \Z_0^n$, meaning that $x = \overbar{\Delta}^+ \overbar{\Delta} x \in \overbar{\Delta}^+ \Z_0^n$. In addition, $x_j - x_n = y_j - y_n$ and so $e^{2\pi i (x_j - x_n)} = h(v_j)/h(v_n) = h(v_j)$ for each $j$. Thus $x \mapsto h$ via \eqref{x-to-h-2}.

For the kernel of \eqref{x-to-h-2}, first note that $\W^n = \overbar{\Delta}^+ \overbar{\Delta} \Z^n \subseteq \overbar{\Delta}^+ \Z_0^n$. A vector $x \in \overbar{\Delta}^+ \Z_0^n$ maps to $h \equiv 1$ via \eqref{x-to-h-2} if and only if $x_j - x_n \in \Z$ for all $j$. If $x \in \W^n$, we can write $x = y - c \one$ for some $y \in \Z^n$ and $c \in \R$. Then each $x_j - x_n = y_j - y_n \in \Z$. Conversely, if $x_j - x_n \in \Z$ for all $j$, let $y = x - x_n \one \in \Z^n$. Since $x \in \R_0^n$ is a translate of $y$ by a multiple of $\one$, $x$ is the orthogonal projection of $y$ onto $\R_0^n$, so $x \in \W^n$. This proves that $\W^n$ is the kernel of \eqref{x-to-h-2}.
\end{proof}

\begin{remark}
As in the other dual lattice construction, multiplication by $\overbar{\Delta}^+$ gives a natural map from $\Z_0^n \big/ \overbar{\Delta} \Z^n$ (which is isomorphic to $\G$) to $\overbar{\Delta}^+ \Z_0^n \big/ \overbar{\Delta}^+ \overbar{\Delta} \Z^n = \overbar{\Delta}^+ \Z_0^n \big/ \W^n$ (which is isomorphic to $\HH$). The corresponding map from $\G$ to $\HH$ is the same one
 described in the remark following Proposition \ref{dual-lattice-1}.
\end{remark}

Suppose $h \in \HH$ is given. Every $x$ that maps to $h$ via \eqref{x-to-h-2} arises by the following recipe: Choose $y \in \R^n$ such that $h(v_j) = e^{2\pi i y_j}$ for all $j$; set $c = \frac{1}{n} (y_1 + \cdots + y_n)$; and let $x = y - c \one$. Then
\[
\|x\|_2^2 = \sum_{j=1}^n (y_j - c)^2,
\]
so $x$ is short when the values $y_j$ are close to each other. This can only happen if the eigenvalue $\lambda_h = \frac{1}{n} \sum_{j=1}^n h(v_j)$ is close to the unit circle. The next theorem, which is parallel to Theorem \ref{dual-length-1}, makes this intuition precise.

\begin{theorem}
\label{dual-length-2}
Given $h \in \HH$, choose $x \in \overbar{\Delta}^+ \Z_0^n$ of minimal Euclidean length such that $x \mapsto h$ via \eqref{x-to-h-2}. Then
\[
8\frac{\|x\|_2^2}{n} \leq 1 - |\lambda_h| \leq 2\pi^2 \frac{\|x\|_2^2}{n}.
\]
\end{theorem}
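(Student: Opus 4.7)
The argument parallels the proof of Theorem~\ref{dual-length-1}, but with two extra complications: $|\lambda_h|$ is phase-invariant (unlike $\Re(\lambda_h)$), and the minimum-length representative in the coset structure $\overbar{\Delta}^+\Z_0^n \big/ \W^n$ is subtler than in $\Delta^{-1}\Z^{n-1} \big/ \Z^{n-1}$. The starting observation is that for any $x \in \overbar{\Delta}^+\Z_0^n$ mapping to $h$ via \eqref{x-to-h-2},
\[
\lambda_h = \frac{1}{n}\sum_{j=1}^n e^{2\pi i(x_j - x_n)} = e^{-2\pi i x_n}\cdot\frac{1}{n}\sum_{j=1}^n e^{2\pi i x_j},
\]
so $|\lambda_h| = \bigl|\tfrac{1}{n}\sum_j e^{2\pi i x_j}\bigr|$, with no reference to $x_n$.

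For the upper bound (the easier direction), any $x$ mapping to $h$ satisfies
\[
|\lambda_h| \;\geq\; \Re\!\left(\frac{1}{n}\sum_{j=1}^n e^{2\pi i x_j}\right) = \frac{1}{n}\sum_{j=1}^n\cos(2\pi x_j) \;\geq\; \frac{1}{n}\sum_{j=1}^n\!\bigl(1 - 2\pi^2 x_j^2\bigr),
\]
using the elementary global inequality $\cos(u) \geq 1 - u^2/2$ (valid for all $u \in \R$). This yields $1 - |\lambda_h| \leq 2\pi^2\|x\|_2^2/n$ for every such $x$, and hence for the minimizer; minimality is not used here.

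For the lower bound, first rotate to make $\lambda_h$ real and nonnegative: set $\phi = \arg(\lambda_h)$ and $s = x_n + \phi/(2\pi)$, so that $|\lambda_h| = \Re(e^{-i\phi}\lambda_h)$ and the imaginary parts vanish identically, giving
\[
|\lambda_h| = \frac{1}{n}\sum_{j=1}^n \cos\!\bigl(2\pi(x_j - s)\bigr).
\]
Pick integers $k_j$ with $t_j := x_j - s - k_j \in [-\tfrac12,\tfrac12]$. Since cosine is $2\pi$-periodic, $\cos(2\pi(x_j - s)) = \cos(2\pi t_j)$, and Lemma~\ref{cos-upper-bound} applied with $r = \pi$ gives $1 - \cos(2\pi t_j) \geq 8 t_j^2$; summing yields
\[
1 - |\lambda_h| \;\geq\; \frac{8}{n}\,\|t\|_2^2.
\]

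The main obstacle is to conclude $\|t\|_2 \geq \|x\|_2$, since $t$ need not lie in $\R_0^n$ or in the coset $x + \W^n$, so minimality of $x$ does not apply directly. The remedy is to split the shift: let $k = (k_1,\ldots,k_n) \in \Z^n$, put $\bar k = \frac{1}{n}\sum_j k_j$, and set $w = k - \bar k\,\one$, which is the orthogonal projection of $k$ onto $\R_0^n$ and therefore lies in $\W^n$. Then
\[
t = x - s\,\one - k = (x - w) - (s + \bar k)\,\one,
\]
with $x - w \in \R_0^n$ orthogonal to $\one$. Hence $\|t\|_2^2 = \|x - w\|_2^2 + n(s + \bar k)^2 \geq \|x - w\|_2^2$, and minimality of $\|x\|_2$ within the coset $x + \W^n$ gives $\|x - w\|_2 \geq \|x\|_2$. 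Combining with the previous display completes the lower bound.
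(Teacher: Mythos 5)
Your proof is correct and takes essentially the same approach as the paper's: the upper bound is identical, and the lower bound uses the same ingredients (phase rotation to make $\lambda_h$ real, the cosine bound from Lemma~\ref{cos-upper-bound} with $r=\pi$, and orthogonal decomposition along $\one$ to compare against the minimal representative in the $\W^n$-coset). The paper builds a candidate representative $x_0$ by projecting a freshly chosen $y$ onto $\R_0^n$, whereas you start from $x$ and write $t=(x-w)-(s+\bar k)\one$ with $w\in\W^n$, but these are just two phrasings of the same orthogonal-projection argument.
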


\begin{proof}
The upper bound is straightforward: Since $\cos(t) \geq 1 - t^2/2$,
\[
\begin{split}
|\lambda_h| &= \left| \frac{1}{n} \sum_{j=1}^n e^{2\pi i (x_j - x_n)} \right| = \frac{1}{n} \left| \sum_{j=1}^n e^{2\pi i x_j} \right| \geq \frac{1}{n} \Re \left[ \sum_{j=1}^n e^{2\pi i x_j} \right] \\
&= \frac{1}{n} \sum_{j=1}^n \cos(2\pi x_j) \geq 1 - \frac{1}{n} \sum_{j=1}^n 2\pi^2 x_j^2 = 1 - 2\pi^2 \frac{\|x\|_2^2}{n}.
\end{split}
\]
For the lower bound, let $r = |\lambda_h|$ and write $\lambda_h = re^{2\pi i \theta}$. We will construct a vector $x_0 \in \overbar{\Delta}^+ \Z_0^n$ that maps to $h$ via \eqref{x-to-h-2} for which $8 \|x_0\|_2^2 \big/ n \leq 1-r$. Since $\|x\|_2 \leq \|x_0\|_2$, the result will follow.

Choose $y \in \R^n$ such that for all $j$, $h(v_j) = e^{2\pi i y_j}$ and $|y_j - \theta| \leq 1/2$. Then
\[
\lambda_h = re^{2\pi i \theta} = \frac{1}{n} \sum_{j=1}^n e^{2\pi i y_j}, \text{ which implies that } r = \frac{1}{n} \sum_{j=1}^n e^{2\pi i (y_j - \theta)}.
\]
Since $\cos(t) \leq 1 - 2t^2 \big/ \pi^2$ for all $|t| \leq \pi$ (by Lemma \ref{cos-upper-bound}),
\begin{equation}
\label{r-upper-bound}
|\lambda_h| = r = \Re(r) = \frac{1}{n} \sum_{j=1}^n \cos(2\pi (y_j - \theta)) \leq 1 - \frac{1}{n} \sum_{j=1}^n 8(y_j - \theta)^2.
\end{equation}
It follows that
\[
8 \frac{\|y - \theta \one \|_2^2}{n} \leq 1 - r.
\]
Let $x_0$ be the orthogonal projection of $y$ onto $\R_0^n$. Then $x_0 = y - c \one$, where $c = \frac{1}{n} (y_1 + \cdots + y_n)$. The surjectivity argument in the proof of Proposition \ref{dual-lattice-2} shows that $x_0 \in \overbar{\Delta}^+ \Z_0^n$ and that $x_0 \mapsto h$ via \eqref{x-to-h-2}. Since $x_0$ is also the orthogonal projection of $y - \theta \one$ onto $\R_0^n$, $\|x_0\|_2 \leq \|y - \theta \one\|_2$. Hence
\[
8 \frac{\|x\|_2^2}{n} \leq 8 \frac{\|x_0\|_2^2}{n} \leq 8 \frac{\|y - \theta \one \|_2^2}{n} \leq 1 - r. \qedhere
\]
\end{proof}

By minimizing Theorem \ref{dual-length-2} over all $h \not\equiv 1$, noting that $h \equiv 1$ corresponds to vectors $x \in \W^n$, we obtain the second statement in Theorem \ref{t.gap.intro}:

\begin{corollary}
\label{spec-gap-2}
The spectral gap $\gamma_d$ of the discrete time sandpile chain satisfies
	\[ 8\frac{\|z\|_2^2}{n} \leq \gamma_d \leq 2 \pi^2 \frac{\|z\|_2^2}{n}, \]
where $z$ is a vector of minimal Euclidean length in $(\overbar{\Delta}^+ \Z_0^n) \setminus \W^n$.
\end{corollary}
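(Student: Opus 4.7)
The proof is essentially a direct minimization of Theorem \ref{dual-length-2} over $h \in \HH \setminus \{1\}$, combined with the identification of $\HH$ provided by Proposition \ref{dual-lattice-2}. So the plan is short.

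First, I would apply Theorem \ref{dual-length-2} to each nontrivial $h \in \HH$. For each such $h$, let $x_h$ be a vector of minimal Euclidean length in $\overbar{\Delta}^+ \Z_0^n$ mapping to $h$ under \eqref{x-to-h-2}. Then
\[
\frac{8\|x_h\|_2^2}{n} \leq 1 - |\lambda_h| \leq \frac{2\pi^2 \|x_h\|_2^2}{n}.
\]
Taking the minimum over $h \in \HH \setminus \{1\}$ gives
\[
\frac{8}{n} \min_{h \not\equiv 1} \|x_h\|_2^2 \leq \gamma_d \leq \frac{2\pi^2}{n} \min_{h \not\equiv 1} \|x_h\|_2^2,
\]
using the definition $\gamma_d = \min\{1 - |\lambda_h| : h \in \HH \setminus \{1\}\}$.

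Next, I would identify $\min_{h \not\equiv 1} \|x_h\|_2^2$ with $\|z\|_2^2$. By Proposition \ref{dual-lattice-2}, the map \eqref{x-to-h-2} is a surjective homomorphism $\overbar{\Delta}^+ \Z_0^n \to \HH$ with kernel $\W^n$, so $h \not\equiv 1$ corresponds precisely to a nontrivial coset, i.e.\ to some $x \in \overbar{\Delta}^+ \Z_0^n$ with $x \notin \W^n$. Since each $x_h$ was chosen to be a shortest representative of its coset,
\[
\min_{h \not\equiv 1} \|x_h\|_2^2 = \min \bigl\{ \|x\|_2^2 \st x \in (\overbar{\Delta}^+ \Z_0^n) \setminus \W^n \bigr\} = \|z\|_2^2.
\]
Combining this with the previous display yields the stated bounds.

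There is no substantive obstacle here; the real content was already produced in Theorem \ref{dual-length-2} and Proposition \ref{dual-lattice-2}. The only thing to double-check is the coset-minimization identity: minimizing first over representatives within each coset and then over nontrivial cosets gives the same value as directly minimizing over $(\overbar{\Delta}^+ \Z_0^n) \setminus \W^n$, which is immediate from the partition of this set into nontrivial cosets of $\W^n$.
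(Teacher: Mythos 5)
Your argument is exactly the paper's: minimize the two-sided bound of Theorem \ref{dual-length-2} over nontrivial $h$, and use Proposition \ref{dual-lattice-2} to identify the minimum of $\|x_h\|_2^2$ with the length of the shortest vector in $(\overbar{\Delta}^+ \Z_0^n) \setminus \W^n$. The paper states this in one line; you have simply spelled out the coset-minimization step, which is correct.
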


As with the remark following Corollary \ref{spec-gap-1}, the cycle on $n$ vertices provides an example where the shortest vector in $\overbar{\Delta}^+ \Z_0^n \setminus \W^n$ has length of order $\sqrt{n}$, while the shortest nonzero vector in $\overbar{\Delta}^+ \Z_0^n$ has length slightly less than $1$.

\subsection{An inverse relationship}
\label{An inverse relationship}

Let $0 = \beta_0 < \beta_1 \leq \cdots \leq \beta_{n-1}$ be the eigenvalues of the full Laplacian matrix $\overbar{\Delta}$. The least positive eigenvalue $\beta_1$ is called the spectral gap of the graph $G$, or sometimes the \emph{algebraic connectivity} of $G$. When $G$ is a $d$-regular graph, $\beta_1 / d$ is the spectral gap of the simple random walk on $G$.\footnote{The second-largest eigenvalue of the transition matrix is $1 - \beta_1 / d$. This disagrees with our earlier definition of the spectral gap as the minimum \emph{absolute} distance between a nontrivial eigenvalue and the unit circle, but it accords with standard usage for reversible Markov chains.} Larger values of $\beta_1$ mean that $G$ is `more connected,' and in the $d$-regular case that the simple random walk on $G$ mixes faster (setting aside issues of periodicity).

In this subsection we prove Theorem \ref{t.inverse.intro}, which says that the spectral gap of the discrete time sandpile chain satisfies
\[
\gamma_d \leq \frac{4 \pi^2}{\beta_1^2 n}.
\]
Surprisingly, large values of $\beta_1$ cause the sandpile chain to mix slowly! This inverse relationship is reminiscent of the bound of Bj\"{o}rner, Lov\'{a}sz and Shor \cite[Theorem 4.2]{BLS91} on the number of topplings until a configuration stabilizes, which has a factor of $\beta_1$ in the denominator and is also proved using the pseudoinverse $\overbar{\Delta}^+$. 

\begin{proof}[Proof of Theorem \ref{t.inverse.intro}]
By Corollary \ref{spec-gap-2}, $\gamma_d$ is determined up to a constant factor by the length of the shortest vector in the dual lattice $\overbar{\Delta}^+ \Z_0^n$ that is not also in the sublattice $\W^n$. The eigenvalues of $\overbar{\Delta}^+$ are $0 < \beta_{n-1}^{-1} \leq \beta_{n-2}^{-1} \leq \cdots \leq \beta_1^{-1}$. Therefore a lower bound on $\beta_1$ gives an upper bound on the operator norm of $\overbar{\Delta}^+$.

Since $\HH \cong \overbar{\Delta}^+ \Z_0^n \big/ \W^n$ by Proposition \ref{dual-lattice-2}, we may assume that $\W^n$ is a proper sublattice of $\overbar{\Delta}^+ \Z_0^n$ (otherwise we would have $|\HH| = 1$ and $\gamma_d$ itself would be meaningless). The lattice $\Z_0^n$ is generated by the vectors $e_i-e_j$ with $1 \leq i < j \leq n$. Thus we can choose $i,j$ such that $\overbar{\Delta}^+ (e_i-e_j) \notin \W^n$. We have
\[
\|\overbar{\Delta}^+ (e_i-e_j)\|_2^2 \leq \frac{\|e_i-e_j\|_2^2}{\beta_1^2} = \frac{2}{\beta_1^2}.
\]
By Corollary \ref{spec-gap-2},
\[
\gamma_d \leq \frac{2\pi^2}{n} \|\overbar{\Delta}^+ (e_i-e_j)\|_2^2 \leq \frac{4 \pi^2}{\beta_1^2 n}. \qedhere
\]
\end{proof}

Later in this subsection we will consider a class of graphs for which the bound in Theorem \ref{t.inverse.intro} is accurate to a constant factor. In many cases, though, it can be far from sharp. For example, when $G$ is the torus $\Z_m \times \Z_m$ (so that $n = m^2$), $\beta_1$ has order $1/n$.

The analogue of Theorem \ref{t.inverse.intro} for the continuous time spectral gap is harder to prove because we must use $\Delta^{-1}$ instead of $\overbar{\Delta}^+$. If $0 < \rho_1 \leq \rho_2 \leq \cdots \leq \rho_{n-1}$ are the eigenvalues of $\Delta$, then the Cauchy interlacing theorem \cite[Ch.\ 4]{HJ90} implies that $\beta_{j-1} \leq \rho_j \leq \beta_j$ for all $1 \leq j \leq n-1$. Therefore the eigenvalues $\rho_{n-1}^{-1}, \ldots, \rho_2^{-1}$ of $\Delta^{-1}$ are all bounded above by $\beta_1^{-1}$, but $\rho_1^{-1}$ may be much larger than $\beta_1^{-1}$. A parallel argument to the proof of Theorem \ref{t.inverse.intro} would break down because of this large eigenvalue. Nevertheless, with considerable effort we can show the following bound. We omit the proof for reasons of space.

\begin{theorem}
\label{inverse-cont}
Suppose the graph $G$ has no vertices of degree $1$. The spectral gap of the continuous time sandpile chain satisfies
\[
\gamma_c \leq \frac{4\pi^2 + O(1/n)}{\beta_1^2 n} \leq \frac{10\pi^2}{\beta_1^2 n}.
\]
\end{theorem}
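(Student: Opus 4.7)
The strategy parallels the proof of Theorem~\ref{t.inverse.intro}, but we must work in the continuous-time dual lattice. By Corollary~\ref{spec-gap-1} (or equivalently by the unconditional inequality $1-\cos(t) \le t^2/2$ applied to $1-\Re(\lambda_h) = \frac{1}{n}\sum_j(1-\cos 2\pi y_j)$, which gives the upper bound $\gamma_c \le 2\pi^2 \|y\|_2^2/n$ for \emph{any} representative), it suffices to exhibit some $y \in (\Delta^{-1}\Z^{n-1})\setminus\Z^{n-1}$ with $\|y\|_2^2 \le (2+O(1/n))/\beta_1^2$.

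For each pair $i\ne j$ of vertices, form $z := \overbar{\Delta}^+(e_i-e_j) \in \R_0^n$, which satisfies $\overbar{\Delta} z = e_i-e_j \in \Z^n$ and $\|z\|_2^2 \le \|e_i-e_j\|_2^2/\beta_1^2 = 2/\beta_1^2$. To pass to $\Delta^{-1}\Z^{n-1}$, shift so the sink coordinate vanishes: set $\tilde y := z - z_n\one$, so $\tilde y_n = 0$ and $\overbar{\Delta}\tilde y = e_i-e_j \in \Z^n$; the truncation $y := (\tilde y_1,\ldots,\tilde y_{n-1})$ then satisfies $\Delta y \in \Z^{n-1}$, and is nontrivial modulo $\Z^{n-1}$ exactly when $e_i-e_j \notin \overbar{\Delta}\Z^n$. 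Using $z \in \R_0^n$ (so $\sum_{k=1}^{n-1} z_k = -z_n$), a short calculation gives the key identity
\[
\|y\|_2^2 \;=\; \sum_{k=1}^{n-1}(z_k-z_n)^2 \;=\; \|z\|_2^2 + n\,z_n^2.
\]
The new term $nz_n^2$ has no discrete-time analogue and reflects the sink asymmetry.

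To control $nz_n^2$, let $p$ be the $n$-th row of $\overbar{\Delta}^+$, so that $z_n = p_i-p_j$. Since $p = \overbar{\Delta}^+ e_n = \overbar{\Delta}^+(e_n - \one/n)$ and $\|e_n-\one/n\|_2 = \sqrt{(n-1)/n}$, we have $\|p\|_2^2 \le (n-1)/(n\beta_1^2)$. Averaging over ordered pairs and using $\sum_i p_i = 0$,
\[
\frac{1}{n(n-1)}\sum_{i\ne j}(p_i-p_j)^2 \;=\; \frac{2\|p\|_2^2}{n-1} \;\le\; \frac{2}{\beta_1^2\, n},
\]
so some pair $(i,j)$ achieves $nz_n^2 \le 2/\beta_1^2$. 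Combined with the universal bound on $\|z\|_2^2$, this gives $\|y\|_2^2 \le 4/\beta_1^2$ and hence $\gamma_c \le 8\pi^2/(\beta_1^2 n) \le 10\pi^2/(\beta_1^2 n)$, which already proves the weaker inequality in the theorem.

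The hardest part is two-fold. First, one must ensure that the pair selected by the averaging argument is also nontrivial in $\G$: this is where the no-degree-1 hypothesis enters, since a vertex $v$ of degree $1$ with neighbor $v'$ forces $e_v \equiv e_{v'}$ in $\Z_0^n/\overbar{\Delta}\Z^n$, removing those pairs from consideration; excluding such vertices keeps enough admissible pairs that a pigeonhole refinement of the averaging still succeeds. Second, sharpening the constant from $8\pi^2$ to $4\pi^2+O(1/n)$ requires exploiting a correlation that my crude union of bounds ignores: writing $z = \sum_k \beta_k^{-1}(u_k(i)-u_k(j))u_k$ in the eigenbasis of $\overbar{\Delta}^+$, the contributions to $\|z\|_2^2$ from $k\ge 2$ are suppressed by $\beta_k^{-2}$ while those to $z_n^2$ are suppressed by $\beta_k^{-2}u_k(n)^2$. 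A more delicate spectral accounting—averaging $\|y\|_2^2$ itself over pairs rather than bounding the two summands separately, and extracting the leading $2/\beta_1^2$ contribution from the Fiedler direction—should yield the tighter constant at the cost of additional bookkeeping, which is presumably why the authors describe the proof as requiring "considerable effort."
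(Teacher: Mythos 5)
Since the paper explicitly omits the proof of Theorem~\ref{inverse-cont} ("We omit the proof for reasons of space"), there is no reference proof to compare against. I can only assess the internal correctness of your argument.

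Your structural plan is reasonable, and the identity
\[
\|y\|_2^2 \;=\; \sum_{k=1}^{n-1}(z_k - z_n)^2 \;=\; \|z\|_2^2 + n\,z_n^2 \qquad (z \in \R_0^n,\ y_k = z_k - z_n)
\]
is correct and is a nice way to quantify exactly what the continuous-time gap costs relative to the discrete-time one. The operator-norm bounds $\|z\|_2^2 \le 2/\beta_1^2$ and $\|p\|_2^2 \le (n-1)/(n\beta_1^2)$, and the averaging giving $\min_{i\ne j}(p_i - p_j)^2 \le 2/(\beta_1^2 n)$, are all fine.

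However, the argument has a genuine gap at the step you flag yourself, and the patch you propose is incorrect. You need the pair $(i,j)$ produced by the averaging to be \emph{nontrivial}, i.e.\ $e_i - e_j \notin \overbar{\Delta}\Z^n$, so that $y \notin \Z^{n-1}$. You attribute this to the no-degree-$1$ hypothesis, claiming that degree-$1$ vertices are the only source of trivial pairs. That is false. By the footnote citing Lemma~4.6 of Baker--Norine, the elements $\sigma_{v_i}$ are all distinct (equivalently $e_i - e_j \notin \overbar{\Delta}\Z^n$ for \emph{every} $i \ne j$) if and only if $G$ is $2$-edge-connected, which is strictly stronger than having no degree-$1$ vertices. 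Two triangles joined by a bridge edge $\{c,d\}$ has minimum degree $2$, yet one checks directly that every multiplicative harmonic function satisfies $h(c)=h(d)$, i.e.\ $e_c - e_d \in \overbar{\Delta}\Z^n$. Worse, along any chain of bridges all interior vertices collapse to a single equivalence class, so a graph with no degree-$1$ vertices can have an equivalence class of size $\Theta(n)$, putting $\Theta(n^2)$ pairs out of play. In that regime the "pigeonhole refinement" you gesture at is not a refinement of your averaging at all --- it would have to average only over admissible pairs, and if most pairs are inadmissible the resulting bound on $n z_n^2$ degrades by up to a factor of $n$, destroying the estimate. (Empirically the theorem still holds in such graphs, but for unrelated reasons: $\beta_1$ is then tiny, so the right-hand side is huge. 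Your argument as structured does not see this and simply fails.) Consequently you have not established even the weaker $10\pi^2/(\beta_1^2 n)$ bound unconditionally --- your $8\pi^2$ estimate is conditional on selecting an admissible pair, which you did not do.

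The second acknowledged gap, sharpening $8\pi^2$ to $4\pi^2 + O(1/n)$, is left as a sketch; fair enough for an outline, but it is not a proof. To close the first gap you would need either (a) to restrict the averaging to pairs in distinct equivalence classes and show that enough such pairs remain with $(p_i-p_j)^2$ small, or (b) to redo the estimate with a different choice of representative than $\overbar{\Delta}^+(e_i-e_j)$; as written, the degree-$1$ hypothesis alone does not give you (a).
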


Combining Theorems \ref{lazy_gap_bd} and \ref{t.inverse.intro} gives both upper and lower bounds on the discrete time spectral gap:
\[
\frac{8}{d_\ast^2 n} \leq \gamma_d \leq \frac{4\pi^2}{\beta_1^2 n},
\]
where $d_\ast$ is the second-largest vertex degree. We now introduce a class of graphs for which the upper and lower bounds have the same order. These are, in our language, 
the expander graphs with bounded degree ratio.

Loosely, the graph $G$ is an expander if it is sparse and if the simple random walk on $G$ mixes quickly. Expanders have many applications in pure and applied mathematics as well as theoretical computer science. See the surveys \cite{L12,HLW06} for more information.

For the purposes of this paper, we will not need the requirement of sparsity. Let $K$ be the transition matrix for the simple random walk on $G$, and let $L = I - K$ be the \emph{random walk Laplacian}. Since $K$ is reversible with respect to the stationary distribution $\pi(v) = \deg(v) \big/ \sum_{w \in V} \deg(w)$, $L$ has real eigenvalues $0 = \theta_0 < \theta_1 \leq \theta_2 \leq \cdots \leq \theta_{n-1}$. The value $\theta_1$ is called the spectral gap of $L$. When $G$ is $d$-regular, $\overbar{\Delta} = dL$ and so $\beta_1 = d \theta_1$.

\begin{definition} 
\label{expander-def}
Fix $\alpha > 0$, and let $L$ be the random walk Laplacian matrix for the graph $G$. We say $G$ is an \emph{$\alpha$-expander} if the spectral gap of $L$ satisfies $\theta_1 \geq \alpha$.
\end{definition}

The following lemma, whose proof is postponed to the end of this section, gives a relationship between $\beta_1$ and $\theta_1$ when $G$ is not regular.

\begin{lemma} 
\label{spec-gap-degrees}
Let $G$ have minimum and maximum vertex degrees $\dmin$ and $\dmax$. The eigenvalues $0 = \beta_0 < \beta_1 \leq \cdots \leq \beta_{n-1}$ of $\overbar{\Delta}$ and $0 = \theta_0 < \theta_1 \leq \cdots \leq \theta_{n-1}$ of $L$ are related by
\[
\dmin \theta_i \leq \beta_i \leq \dmax \theta_i, \qquad 0 \leq i \leq n-1.
\]
\end{lemma}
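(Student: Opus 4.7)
The plan is to relate the random walk Laplacian $L$ to a symmetric normalized Laplacian and then apply the Courant--Fischer variational characterization to both $\overbar{\Delta}$ and this normalized version. Let $D$ denote the diagonal matrix of vertex degrees and $A$ the adjacency matrix of $G$. Since the simple random walk transition matrix is $K = D^{-1}A$, we have $L = I - K = D^{-1}(D - A) = D^{-1}\overbar{\Delta}$. Although $L$ itself is not symmetric, the similarity $D^{1/2} L D^{-1/2} = D^{-1/2}\overbar{\Delta} D^{-1/2} =: \mathcal{L}$ shows that $L$ and the symmetric positive semidefinite matrix $\mathcal{L}$ share the same eigenvalues $0 = \theta_0 < \theta_1 \leq \cdots \leq \theta_{n-1}$.

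Next I would invoke Courant--Fischer. For $\overbar{\Delta}$ this gives
\[
\beta_i = \min_{\dim U = i+1} \ \max_{y \in U \setminus \{0\}} \frac{y^\transpose \overbar{\Delta}\, y}{y^\transpose y},
\]
while for $\mathcal{L}$, after the substitution $y = D^{-1/2} x$ (which is a bijection on $\R^n$ and maps $(i+1)$-dimensional subspaces to $(i+1)$-dimensional subspaces), one obtains
\[
\theta_i = \min_{\dim V = i+1} \ \max_{y \in V \setminus \{0\}} \frac{y^\transpose \overbar{\Delta}\, y}{y^\transpose D\, y}.
\]
Because $\overbar{\Delta}$ is positive semidefinite and $\dmin \|y\|_2^2 \leq y^\transpose D\, y \leq \dmax \|y\|_2^2$, the Rayleigh quotients satisfy
\[
\frac{y^\transpose \overbar{\Delta}\, y}{\dmax\, \|y\|_2^2} \leq \frac{y^\transpose \overbar{\Delta}\, y}{y^\transpose D\, y} \leq \frac{y^\transpose \overbar{\Delta}\, y}{\dmin\, \|y\|_2^2}
\]
for every nonzero $y$. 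Taking the max over $y$ in a subspace and then the min over subspaces of dimension $i+1$ preserves these inequalities, yielding $\beta_i / \dmax \leq \theta_i \leq \beta_i / \dmin$, which is equivalent to the claim.

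There is no real obstacle; the only thing to be careful about is the passage from the non-symmetric $L$ to the symmetric $\mathcal{L}$ and the matching substitution $y = D^{-1/2} x$ in the Rayleigh quotient for $\theta_i$, after which the two-sided bound on $y^\transpose D\, y$ immediately yields the desired comparison.
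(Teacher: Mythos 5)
Your proof is correct and takes essentially the same approach as the paper's. The paper expresses $\theta_i$ via the min-max of $\langle f, Lf\rangle_\pi / \langle f,f\rangle_\pi$ in the $\pi$-weighted inner product, whereas you pass to the symmetric normalized Laplacian $D^{-1/2}\overbar{\Delta}D^{-1/2}$ and substitute $y = D^{-1/2}x$; these are two equivalent bookkeeping devices for handling the asymmetry of $L$, and in both cases the heart of the argument is the comparison of the Rayleigh quotients $y^\transpose\overbar{\Delta}y/(y^\transpose y)$ and $y^\transpose\overbar{\Delta}y/(y^\transpose D y)$ using $\dmin\|y\|_2^2 \leq y^\transpose D\, y \leq \dmax\|y\|_2^2$ together with the Courant--Fischer characterization.
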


In particular, $\beta_1 \geq \dmin \theta_1$. Thus if $G$ is an $\alpha$-expander and the \emph{degree ratio} $\dmax / \dmin$ is bounded by a constant $R$, then $\beta_1 \geq (\alpha/R) \dmax$.

\begin{prop} 
\label{expander}
Suppose that for fixed $\alpha, R > 0$, the graph $G$ is an $\alpha$-expander and $\dmax / \dmin \leq R$. Then
\[
\frac{8}{\dmax^2 n} \leq \gamma_d \leq \frac{4\pi^2 R^2 \big/ \alpha^2}{\dmax^2 n}.
\]
\end{prop}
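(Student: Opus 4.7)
The plan is a direct combination of the three main ingredients already established: the universal lower bound from Theorem~\ref{lazy_gap_bd}, the inverse bound from Theorem~\ref{t.inverse.intro}, and the degree/Laplacian comparison of Lemma~\ref{spec-gap-degrees}.

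For the lower bound, I would simply invoke Theorem~\ref{lazy_gap_bd}, which gives $\gamma_d \geq 8/(d_\ast^2 n)$. Since $d_\ast \leq \dmax$ by definition of $d_\ast$ as the penultimate term of the nondecreasing degree sequence, we obtain $\gamma_d \geq 8/(\dmax^2 n)$ immediately.

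For the upper bound, I would chain together two inequalities. First, Theorem~\ref{t.inverse.intro} gives $\gamma_d \leq 4\pi^2/(\beta_1^2 n)$, so it remains to produce a lower bound on $\beta_1$. By Lemma~\ref{spec-gap-degrees}, $\beta_1 \geq \dmin\,\theta_1$, and the $\alpha$-expander hypothesis gives $\theta_1 \geq \alpha$, so $\beta_1 \geq \alpha\, \dmin$. The bounded degree ratio assumption $\dmax/\dmin \leq R$ then yields $\dmin \geq \dmax/R$, hence $\beta_1^2 \geq \alpha^2 \dmax^2/R^2$. Substituting back produces
\[
\gamma_d \leq \frac{4\pi^2}{\beta_1^2 n} \leq \frac{4\pi^2 R^2/\alpha^2}{\dmax^2 n},
\]
as desired.

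There is no real obstacle here; this is a corollary whose content lies entirely in the named results it invokes. The only small point to be careful about is the direction of the inequality $d_\ast \leq \dmax$ when passing from the lower bound of Theorem~\ref{lazy_gap_bd} to the statement in terms of $\dmax$, and the careful bookkeeping of $R$ when converting between $\dmin$ and $\dmax$. The proof will accordingly be only a few lines long.
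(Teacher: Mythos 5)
Your proof is correct and follows exactly the same argument as the paper: lower bound from Theorem~\ref{lazy_gap_bd} (using $d_\ast \leq \dmax$), upper bound from Theorem~\ref{t.inverse.intro} plus the chain $\beta_1 \geq \dmin\theta_1 \geq \alpha\dmin \geq (\alpha/R)\dmax$ via Lemma~\ref{spec-gap-degrees} and the hypotheses. Nothing further to add.
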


\begin{proof}
The lower bound is Theorem \ref{lazy_gap_bd}. The upper bound is Theorem \ref{t.inverse.intro} combined with Lemma \ref{spec-gap-degrees}, as discussed above.
\end{proof}

For expanders with bounded degree ratio, Proposition \ref{expander} determines the spectral gap $\gamma_d$ of the discrete time sandpile chain up to a constant factor. 

Note that we could replace $\dmax$ with $d_\ast$ in the statement of Proposition \ref{expander} to get a slightly stronger result with the exact same proof. This is useful in graphs where a single vertex has much higher degree than all the others.

We can prove an analogue of Proposition \ref{expander} for the continuous time spectral gap $\gamma_c$. The proof uses Theorem \ref{inverse-cont} when $\dmin \geq 2$ and a separate argument when $\dmin = 1$. The lower bound is the same, since $\gamma_c \geq \gamma_d$. In the upper bound, the numerator $4\pi^2 R^2 / \alpha^2$ is replaced by $1300 R^3 / \alpha^2$.

Proposition \ref{expander} allows us to determine the order of the relaxation time for the sandpile chain 
on some well-known random graphs.

\begin{prop} 
\label{d-regular}
Fix $d\geq 3$, and let $(G_n)$ be a sequence of random $d$-regular graphs with $|V(G_n)| = n \to \infty$.\footnote{More precisely, assume that for each $n$ in the sequence, the set of simple $d$-regular graphs on $n$ vertices is nonempty. Choose $G_n$ uniformly from this set, independently for each $n$.} The spectral gaps $\gamma_d(n)$ for the discrete time sandpile chain satisfy $\gamma_d = \Theta(1/n)$ almost surely.
\end{prop}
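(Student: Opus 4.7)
The plan is to apply Proposition~\ref{expander} with degree ratio $R=1$. Since each $G_n$ is $d$-regular, we have $d_{\min} = d_{\max} = d_\ast = d$, all constants. The lower bound $\gamma_d(n) \geq 8/(d^2 n) = \Omega(1/n)$ holds deterministically for every graph by Theorem~\ref{lazy_gap_bd}, so no randomness is needed there.

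For the matching upper bound $\gamma_d(n) = O(1/n)$, it suffices by Proposition~\ref{expander} to produce a fixed $\alpha = \alpha(d) > 0$ such that, almost surely, $G_n$ is an $\alpha$-expander for all but finitely many $n$. Because $G_n$ is $d$-regular, $\overbar{\Delta}(G_n) = d L(G_n)$, so $\theta_1(G_n) = \beta_1(G_n)/d = 1 - \lambda_2(A_{G_n})/d$, where $\lambda_2$ denotes the second-largest eigenvalue of the adjacency matrix. Thus we want $\lambda_2(A_{G_n}) \leq d - d\alpha$ for all large $n$, almost surely.

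This is a classical expansion result for random regular graphs. By Friedman's theorem (or, for weaker and simpler bounds that would also suffice here, the results of Broder--Shamir and Friedman--Kahn--Szemer\'edi), for every $\e > 0$ one has $\lambda_2(A_{G_n}) \leq 2\sqrt{d-1} + \e$ with probability $1 - o(1)$, and in fact with failure probability that is polynomially small (summable) in $n$. Since $2\sqrt{d-1} < d$ whenever $d \geq 3$, we can choose $\e$ small and then set $\alpha := 1 - (2\sqrt{d-1} + \e)/d > 0$. As the graphs $G_n$ are sampled independently across $n$, the Borel--Cantelli lemma upgrades the summable-in-$n$ failure bound to the almost-sure statement that $\theta_1(G_n) \geq \alpha$ for all sufficiently large $n$. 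Proposition~\ref{expander} then yields $\gamma_d(n) \leq (4\pi^2/\alpha^2)/(d^2 n) = O(1/n)$ almost surely, completing the proof.

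The only substantive step is the invocation of the expansion result with a failure probability summable in $n$; everything else is a bookkeeping application of Proposition~\ref{expander} and Theorem~\ref{lazy_gap_bd}. If one wished to avoid Friedman's sharp theorem, the qualitative statement that a random $d$-regular graph has $\theta_1$ bounded away from $0$ with sufficiently high probability (due to Broder--Shamir) is already enough, provided the quoted bound is polynomially small; this is standard in the random regular graph literature.
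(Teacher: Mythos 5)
Your proof is correct and takes the same route as the paper, which deduces the result from Proposition~\ref{expander} together with Friedman's theorem. The one substantive addition is your explicit Borel--Cantelli step to upgrade Friedman's ``asymptotically almost surely'' bound to the stated ``almost surely'' conclusion --- a genuine refinement, since the paper invokes Friedman without addressing the discrepancy; do note, though, that the summability of the failure probability in Friedman's theorem is not automatic for small $d$ (the polynomial exponent there can be at or near $1$ when $d$ is small), so your parenthetical claim of summability is a point worth verifying rather than a free lunch.
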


\begin{prop} 
\label{Erdos-Renyi}
Let $p = p(n)$ satisfy $0 \leq p \leq 1$ and $np / \log(n) \to \infty$ as $n \to \infty$. Let $G_n = G(n,p)$ be independently chosen Erd\H os-R\'enyi random graphs on $n$ vertices with edge probability $p$. The spectral gaps $\gamma_d(n)$ for the discrete time sandpile chain satisfy $\gamma_d = \Theta(1/n^3 p^2)$ almost surely.
\end{prop}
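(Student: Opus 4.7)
The plan is to deduce this from Proposition~\ref{expander} by verifying three properties of $G_n \sim G(n,p)$ that hold almost surely in the given regime: (i) $G_n$ is connected (so that the sandpile chain is defined); (ii) the degree ratio $\dmax/\dmin$ is bounded; and (iii) $G_n$ is an $\alpha$-expander for some fixed $\alpha > 0$. Once these are in place, the concentration of degrees around $(n-1)p$ gives $\dmax = \Theta(np)$, so Proposition~\ref{expander} yields
\[
\gamma_d = \Theta\!\left( \frac{1}{\dmax^2 \, n} \right) = \Theta\!\left( \frac{1}{n^3 p^2} \right),
\]
which is the desired conclusion.

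Properties (i) and (ii) are straightforward consequences of the hypothesis $np/\log n \to \infty$. Each degree is $\mathrm{Binom}(n-1,p)$-distributed, so a standard Chernoff bound, a union bound over the $n$ vertices, and Borel--Cantelli give $\dmin = (1 - o(1))np$ and $\dmax = (1 + o(1))np$ almost surely; in particular $\dmax/\dmin \to 1$, so the ratio is bounded by any fixed $R > 1$ for all sufficiently large $n$. Connectivity in this regime is classical: $\PP(G_n \text{ disconnected})$ is summable in $n$ under our hypothesis, so $G_n$ is connected for all sufficiently large $n$ almost surely.

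The technically most demanding step is (iii), a uniform lower bound on the spectral gap $\theta_1$ of the random walk Laplacian $L = I - D^{-1} A$. One route is via matrix concentration: $\EE A = p(J - I)$ has spectral gap $pn$ on the subspace orthogonal to $\one$, while Bernstein-type inequalities for sums of independent random symmetric matrices give $\|A - \EE A\|_{\mathrm{op}} = O(\sqrt{np \, \log n})$ with probability $1 - o(n^{-2})$, a quantity of smaller order than $pn$ in our regime. Combined with the degree concentration from (ii), which allows one to pass from the spectrum of $A$ to that of $L$, this shows $\theta_1 = 1 - o(1)$ almost surely. Hence $G_n$ is an $\alpha$-expander for any fixed $\alpha < 1$ and all sufficiently large $n$. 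Inserting $R = 1 + o(1)$ and this $\alpha$ into Proposition~\ref{expander} completes the proof. The hard part is the spectral estimate in (iii); the degree concentration and connectivity are elementary, and once the expander bound is in hand the rest is bookkeeping.
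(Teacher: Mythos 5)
Your proposal is correct and takes essentially the same route as the paper: both reduce to Proposition~\ref{expander} by establishing degree concentration (so $\dmax/\dmin \to 1$ and $\dmax = (1+o(1))np$) together with an expansion estimate $\theta_1 = 1 - o(1)$ almost surely. The only difference is that the paper cites Theorem 2 of Chung and Radcliffe \cite{CR11} for the expansion step, whereas you sketch the underlying matrix-Bernstein concentration argument directly (which is, in effect, how that cited result is proved).
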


Both these propositions are simple applications of Proposition \ref{expander}. For Proposition \ref{d-regular}, all we need is the well-known fact that random $d$-regular graphs are expanders. A famous result of Friedman \cite{F08} says that for every $\e > 0$, the random walk spectral gap satisfies $\theta_1 \geq 1 - 2\sqrt{d-1} \big/ d - \e$ asymptotically almost surely.

For Proposition \ref{Erdos-Renyi}, we first note that in our regime, $\dmin / np$ and $\dmax / np$ converge to $1$ almost surely as $n \to \infty$. (Use Bernstein's Inequality to show that the degree of each vertex concentrates around its mean $(n-1)p$, then take a union bound over all the vertices.) To prove expansion, Theorem 2 of \cite{CR11} implies that because $np / \log(n) \to \infty$, almost surely $\theta_1 \geq 1 - o(1)$.

We conclude with the proof of Lemma \ref{spec-gap-degrees}.

\begin{proof}[Proof of Lemma \ref{spec-gap-degrees}]
Define $Z = \sum_{j=1}^n \deg(v_j)$. The stationary distribution $\pi$ of the simple random walk on $G$ is $\pi(v_j) = \deg(v_j) \big/ Z$. Consider these two inner products on the space $\R^V = \{f: V \to \R \}$:
\[
\langle f,g \rangle_\pi = \sum_{j=1}^n f(v_j)g(v_j)\pi(v_j), \qquad \langle f,g \rangle = \sum_{j=1}^n f(v_j)g(v_j).
\]
The matrix $\overbar{\Delta}$ is self-adjoint with respect to $\langle \cdot,\cdot \rangle$, while $L$ is self-adjoint with respect to $\langle \cdot,\cdot \rangle_\pi$. We have $\overbar{\Delta} = DL$ where $D$ is the diagonal matrix with entries $D(j,j) = \deg(v_j)$. The two inner products are related by
\[
\langle f,Dg \rangle = \sum_{j=1}^n f(v_j)\deg(v_j) g(v_j) = Z\langle f,g \rangle_\pi.
\]
In addition,
\[
\frac{\dmin}{Z} \langle f,f \rangle \leq \sum_{j=1}^n \frac{\deg(v_j)}{Z} f(v_j)^2 = \langle f,f \rangle_\pi \leq \frac{\dmax}{Z} \langle f,f \rangle.
\]
It follows that for $f \not\equiv 0$,
\begin{multline} \label{Rayleigh-ineq}
\dmin \left( \frac{ \langle f, Lf \rangle_\pi}{\langle f,f \rangle_\pi} \right) = \frac{Z \langle f, Lf \rangle_\pi}{(Z\big/\dmin)\langle f,f \rangle_\pi} \leq \frac{\langle f, DLf \rangle}{\langle f,f \rangle} \\
\leq \frac{Z \langle f, Lf \rangle_\pi}{(Z\big/\dmax)\langle f,f \rangle_\pi} = \dmax \left( \frac{ \langle f, Lf \rangle_\pi}{\langle f,f \rangle_\pi} \right).
\end{multline}
Since $L$ is self-adjoint with respect to $\langle \cdot,\cdot \rangle_\pi$ and $\overbar{\Delta} = DL$ is symmetric, the variational characterization of eigenvalues (see \cite{HJ90}) says that for each $0 \leq i \leq n-1$,
\begin{equation} \label{theta-variational}
\beta_i = \min_{\substack{U\subseteq\R^V \\ \dim(U) = i+1}} \max_{f\in U, f \not\equiv 0} \frac{\langle f, \overbar{\Delta} f \rangle}{\langle f,f \rangle}.
\end{equation}
Here the minimum is taken over all linear subspaces $U\subseteq\R^V$ of dimension $i+1$. Likewise,
\begin{equation} \label{beta-variational}
\theta_i = \min_{\substack{U\subseteq\R^V \\ \dim(U) = i+1}} \max_{f\in U, f \not\equiv 0} \frac{ \langle f, Lf \rangle_\pi}{\langle f,f \rangle_\pi}.
\end{equation}
Comparing the characterizations \eqref{theta-variational} and \eqref{beta-variational} and using the upper and lower bounds in \eqref{Rayleigh-ineq}, we conclude that
\[
\dmin \theta_i \leq \beta_i \leq \dmax \theta_i. \qedhere
\]
\end{proof}

\section{The Smoothing Parameter}
\label{The Smoothing Parameter}

This section discusses the relationship between the mixing time of the sandpile chain and a lattice invariant called the \emph{smoothing parameter}, which was introduced by Micciancio and Regev \cite{MR07} in the context of lattice-based cryptography. Our main result, Theorem \ref{t.smoothing.revised}, is a slightly stronger version of Theorem \ref{t.smoothing.intro}. It provides an upper bound on the $L^2$ mixing time in terms of the smoothing parameter for the Laplacian lattice.

The starting point of the proof is the relationship between lengths of dual lattice vectors and eigenvalues of the sandpile chain developed in Sections \ref{Dual lattice: Continuous time} and \ref{Dual lattice: Discrete time}. Previously we focused on the consequences for the spectral gap. Here we consider all the eigenvalues in order to control the $L^2$ distance from stationarity.

With Theorem \ref{t.smoothing.revised} in hand, we can take advantage of existing results that relate the smoothing parameter to other lattice invariants. By this approach, we show that if $d_\ast$ is the second-highest vertex degree in $G$, then the mixing times for both the discrete and continuous time sandpile chains are at most $\frac{1}{16}d_\ast^2 n \log n$. The precise statement is Theorem \ref{smoothing-mixing-time} below.

We already knew from Theorem \ref{lazy_gap_bd} that the relaxation time is at most $\frac{1}{8}d_\ast^2 n$, so (by Proposition \ref{TV_eigval}) the mixing time is at most $\frac{1}{16}(d_\ast^2 \log d_\ast) n^2$. The new bound of $\frac{1}{16}d_\ast^2 n \log n$ is a significant improvement. When $d_\ast \gg \sqrt{n}$, Theorem \ref{cutoff-upper-bd} further improves the leading constant from $\frac{1}{16}$ to $\frac{1}{4\pi^2}$. We will see in Section \ref{Complete graph} that this bound is sharp for the discrete time sandpile chain on the complete graph.

Let us begin by recalling the definition of the smoothing parameter from Section \ref{Introduction}. Let $\Lambda$ be a lattice in $\R^m$, and let $V = \Span(\Lambda) \subseteq \R^m$. For $s > 0$, the function
\[
f_{\Lambda}(s) = \sum_{x \in \Lambda^\ast \setminus \{\zero\}} e^{-\pi s^2 \|x\|_2^2}
\]
is continuous and strictly decreasing, with a limit of $\infty$ as $s \to 0$ and a limit of $0$ as $s \to \infty$. For $\e > 0$, the smoothing parameter of $\Lambda$ is defined as $\eta_\e(\Lambda) := f_\Lambda^{-1}(\e)$. Therefore,
\begin{equation} \label{smoothing-def}
\sum_{x \in \Lambda^\ast \setminus \{\zero\}} e^{-\pi \eta_\e^2(\Lambda) \|x\|_2^2} = \e.
\end{equation}

In \cite{MR07} the smoothing parameter was defined only for lattices of full rank. The definition given above is a natural generalization. Indeed, suppose $\Lambda$ has rank $k < m$. Fix a vector space isomorphism $\Phi : V \to \R^k$ that preserves the standard inner product. We see that $\Phi(\Lambda^\ast) = (\Phi(\Lambda))^\ast$, so $f_\Lambda(s) = f_{\Phi(\Lambda)}(s)$ and $\eta_\e(\Lambda) = \eta_\e(\Phi(\Lambda))$. Results that relate the smoothing parameter to other values invariant under $\Phi$, such as lengths of vectors, carry over without change to the general setting.

The reason for the name `smoothing parameter' is an alternative characterization that we will not use in our proofs. Let $\Lambda \subseteq \R^k$ be a full-rank lattice. Given $c \in \R^k$ and $r > 0$, push forward the density for a Gaussian on $\R^k$ with mean $c$ and covariance matrix $\frac{r^2}{2\pi} \mathrm{Id}$ by the quotient map $\R^k \to \mathcal{D} := \R^k / \Lambda$. It is shown in \cite{MR07} that the $L^\infty$ distance between this density and the uniform density on $\mathcal{D}$ is independent of $c$, and equals $\e$ precisely when $r = \eta_\e(\Lambda)$. Thus $\eta_\e(\Lambda)$ is the amount of scaling necessary for a Gaussian to be nearly uniformly distributed over $\mathcal{D}$, with an $L^\infty$ error of $\e$.

\begin{theorem}
\label{t.smoothing.revised}
Let $H_{id}^t$ and $P_{id}^t$ denote the distributions at time $t$ of the continuous and discrete time sandpile chains, respectively, started from the identity, and let $U$ denote the uniform distribution on $\G$. For any $\e > 0$,
\begin{gather}
\|H_{id}^t - U \|_2^2 \leq \e \enspace \text{\em for all } t \geq \frac{\pi}{16} n \cdot \eta_\e^2(\Delta \Z^{n-1}), \label{t-lower-bound-1} \\[3pt]
\max\left\{ \|H_{id}^t - U \|_2^2, \, \|P_{id}^t - U \|_2^2 \right\} \leq \e \enspace \text{\em for all } t \geq \frac{\pi}{16}n \cdot \eta_\e^2(\overbar{\Delta} \Z^n). \label{t-lower-bound-2}
\end{gather}
\end{theorem}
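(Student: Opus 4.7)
The plan is to feed the Fourier formulas for the $L^2$ distance through the lower bounds on $1-\Re(\lambda_h)$ and $1-|\lambda_h|$ from Theorems \ref{dual-length-1} and \ref{dual-length-2}, and then recognize the resulting sums as dominated by the smoothing parameter sum $f_\Lambda(s)$ for the appropriate Laplacian lattice $\Lambda$. The calibration $s=\sqrt{16t/(\pi n)}$ makes $\pi s^2=16t/n$, which converts the threshold $s\geq\eta_\e(\Lambda)$ into $t\geq\tfrac{\pi}{16}n\,\eta_\e^2(\Lambda)$.

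First I would record, for the continuous chain,
$$\|H_{id}^t-U\|_2^2=\sum_{h\in\HH\setminus\{1\}}e^{-2t(1-\Re(\lambda_h))},$$
and for the discrete chain use $1-r\leq e^{-r}$ on \eqref{L2_equality} to obtain
$$\|P_{id}^t-U\|_2^2\leq\sum_{h\in\HH\setminus\{1\}}e^{-2t(1-|\lambda_h|)}.$$
Theorem \ref{dual-length-1} then gives $1-\Re(\lambda_h)\geq 8\|x_h\|_2^2/n$, where $x_h$ is a shortest representative of the equivalence class $h+\Z^{n-1}$ in $(\Delta^{-1}\Z^{n-1})/\Z^{n-1}\cong\HH$, while Theorem \ref{dual-length-2} gives $1-|\lambda_h|\geq 8\|z_h\|_2^2/n$ for the shortest $z_h$ in the class $h+\W^n$ in $(\overbar{\Delta}^+\Z_0^n)/\W^n\cong\HH$. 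Since $1-\Re(\lambda_h)\geq 1-|\lambda_h|$, the bound with $z_h$ also applies to the continuous chain, yielding the second half of \eqref{t-lower-bound-2}.

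With $s=\sqrt{16t/(\pi n)}$, each summand becomes $e^{-\pi s^2\|x_h\|_2^2}$ or $e^{-\pi s^2\|z_h\|_2^2}$. Choosing a single shortest representative from each nontrivial equivalence class picks out distinct nonzero elements of the dual lattice $\Lambda^\ast$, which equals $\Delta^{-1}\Z^{n-1}$ when $\Lambda=\Delta\Z^{n-1}$ and equals $\overbar{\Delta}^+\Z_0^n$ when $\Lambda=\overbar{\Delta}\Z^n$ (the latter being established in Section \ref{Dual lattice: Discrete time}). Therefore
$$\sum_{h\in\HH\setminus\{1\}}e^{-\pi s^2\|x_h\|_2^2}\leq\sum_{x\in\Lambda^\ast\setminus\{0\}}e^{-\pi s^2\|x\|_2^2}=f_\Lambda(s),$$
and likewise with $z_h$. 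Monotonicity of $f_\Lambda$ and the defining equation $f_\Lambda(\eta_\e(\Lambda))=\e$ then yield \eqref{t-lower-bound-1} and \eqref{t-lower-bound-2}.

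The proof is mostly routine once Theorems \ref{dual-length-1} and \ref{dual-length-2} are in hand; the only mildly delicate point is verifying that the passage from the class-indexed sum to the full dual-lattice sum preserves the inequality. This is immediate, since distinct nontrivial classes contribute distinct representatives and the representatives of the trivial class (other than the origin) are simply discarded when passing to $f_\Lambda(s)$.
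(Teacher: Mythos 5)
Your proposal is correct and follows essentially the same route as the paper: express the $L^2$ distance via the Fourier formula, lower-bound the exponents using Theorems \ref{dual-length-1} and \ref{dual-length-2}, calibrate $s$ so that $\pi s^2 = 16t/n$, and majorize the resulting class-indexed sum by the full dual-lattice sum $f_\Lambda(s)$. The only cosmetic remark is that the nonzero representatives of the trivial class are not so much ``discarded'' as \emph{added} when passing to $f_\Lambda(s)$, which is harmless since all terms are nonnegative.
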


\noindent Since $\|H_\sigma^t - U\|_2^2 = \|H_{id}^t - U\|_2^2$ and $\|P_\sigma^t - U\|_2^2 = \|P_{id}^t - U\|_2^2$ for any $\sigma \in \G$, Theorem \ref{t.smoothing.revised} implies Theorem \ref{t.smoothing.intro}. The only difference is that the bound \eqref{t-lower-bound-2} applies to both the continuous and discrete time sandpile chains.

\begin{proof}[Proof of Theorem \ref{t.smoothing.revised}]
The proofs of \eqref{t-lower-bound-1} and \eqref{t-lower-bound-2} are almost identical. For \eqref{t-lower-bound-1}, start with the formula
\[
\|H_{id}^t - U \|_2^2 = \sum_{h \in \HH \setminus \{1\}} e^{-2t(1 - \Re(\lambda_h))}.
\]
In Section \ref{Dual lattice: Continuous time} we saw that each $h \in \HH$ corresponds to an equivalence class of vectors in the dual lattice $\Delta^{-1} \Z^{n-1}$. Let $x_h$ be a member of the equivalence class corresponding to $h$, chosen to have minimal Euclidean length. By Theorem \ref{dual-length-1},
\[
1 - \Re(\lambda_h) \geq 8 \frac{\|x_h\|_2^2}{n}.
\]
Let $W = \{x_h : h \in \HH \} \subseteq \Delta^{-1} \Z^{n-1}$. Since $h \equiv 1$ corresponds to $x_h = \zero$,
\[
\|H_{id}^t - U \|_2^2 \leq \sum_{x \in W \setminus \{\zero\}} e^{-\frac{16t}{n} \|x\|_2^2} \leq \sum_{x \in \Delta^{-1} \Z^{n-1} \setminus \{\zero\}} e^{-\frac{16t}{n} \|x\|_2^2}.
\]
Now using the lower bound on $t$ in \eqref{t-lower-bound-1} along with \eqref{smoothing-def},
\[
\sum_{x \in \Delta^{-1} \Z^{n-1} \setminus \{\zero\}} e^{-\frac{16t}{n} \|x\|_2^2} \leq \sum_{x \in \Delta^{-1} \Z^{n-1} \setminus \{\zero\}} e^{-\pi \eta_\e^2(\Delta \Z^{n-1}) \|x\|_2^2} = \e.
\]
This proves \eqref{t-lower-bound-1}.

To show \eqref{t-lower-bound-2}, we have
\begin{gather*}
\|H_{id}^t - U\|_2^2 = \sum_{h \in \HH \setminus \{1\}} e^{-2t(1 - \Re(\lambda_h))} \leq \sum_{h \in \HH \setminus \{1\}} e^{-2t(1 - |\lambda_h|)}, \\
\|P_{id}^t - U \|_2^2 = \sum_{h \in \HH \setminus \{1\}} |\lambda_h|^{2t} = \sum_{h \in \HH \setminus \{1\}} e^{2t \log |\lambda_h|} \leq \sum_{h \in \HH \setminus \{1\}} e^{-2t(1 - |\lambda_h|)}.
\end{gather*}
From there the proof is the same as above, using the dual lattice $\overbar{\Delta}^+ \Z_0^n$ in place of $\Delta^{-1} \Z^{n-1}$ and Theorem \ref{dual-length-2} in place of Theorem \ref{dual-length-1}.
\end{proof}

Theorem \ref{smoothing-optimized} will show that the leading constant $\pi / 16$ in Theorem \ref{t.smoothing.revised} can be improved under certain conditions.

To apply Theorem \ref{t.smoothing.revised}, we need to bound the smoothing parameter in terms of other lattice invariants. There are several such bounds in the literature. We will use the following result of \cite{MR07}.

\begin{lemma}[\cite{MR07}, Lemma 3.3] \label{MR3.3}
For any lattice $\Lambda$ of rank $k$ and any $\e > 0$,
\[
\eta_\e(\Lambda) \leq \sqrt{ \frac{\log(2k(1 + 1/\e))}{\pi} } \cdot \lambda_k(\Lambda),
\]
where $\lambda_k(\Lambda)$ is the least real number $r$ such that the closed Euclidean ball of radius $r$ about the origin contains at least $k$ linearly independent vectors in $\Lambda$.
\end{lemma}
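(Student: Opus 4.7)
The plan is to exploit the definition of $\lambda_k(\Lambda)$ by choosing $k$ linearly independent vectors $v_1,\ldots,v_k \in \Lambda$ with $\|v_i\|_2 \leq \lambda := \lambda_k(\Lambda)$, and to reduce the smoothing estimate to a calculation on the sublattice they generate. Let $\Lambda' := \Z v_1 + \cdots + \Z v_k \subseteq \Lambda$. Since $\Lambda' \subseteq \Lambda$, the duals satisfy $\Lambda^\ast \subseteq (\Lambda')^\ast$, so for every $s > 0$,
\[
f_\Lambda(s) = \sum_{x \in \Lambda^\ast \setminus \{\zero\}} e^{-\pi s^2 \|x\|_2^2} \leq \sum_{x \in (\Lambda')^\ast \setminus \{\zero\}} e^{-\pi s^2 \|x\|_2^2} = f_{\Lambda'}(s),
\]
and therefore $\eta_\e(\Lambda) \leq \eta_\e(\Lambda')$. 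This reduces the problem to a lattice with an explicit short basis.

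Next, parameterize $(\Lambda')^\ast$. Let $B$ be the $k \times k$ matrix whose columns are $v_1,\ldots,v_k$, expressed in an orthonormal basis of $\Span(\Lambda)$. Then $\Lambda' = B\Z^k$ and $(\Lambda')^\ast = B^{-\transpose} \Z^k$, so every $x \in (\Lambda')^\ast$ has the form $x = B^{-\transpose} y$ for a unique $y \in \Z^k$, with $y_i = \langle v_i, x \rangle$. I would bound $\|x\|_2$ from below in terms of $\|y\|_2$ using the singular values of $B$: since $\|v_i\|_2 \leq \lambda$ for all $i$, the operator norm of $B$ is controlled by $\lambda$ times a factor depending on the geometry of the chosen basis. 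This converts $f_{\Lambda'}(s)$ into a sum of the form $\sum_{y \in \Z^k \setminus \{\zero\}} e^{-\pi t^2 y^\transpose M y}$ for a positive-definite form $M$.

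The cleanest case is when $M$ is essentially a multiple of the identity, in which case the sum factors as $\Theta(t)^k - 1$, where $\Theta(t) = \sum_{m \in \Z} e^{-\pi t^2 m^2}$ is the Jacobi theta series. Using the elementary tail bound $\Theta(t) \leq 1 + 2 e^{-\pi t^2}/(1 - e^{-\pi t^2})$ one checks that $\Theta(t)^k - 1 \leq \e$ whenever $t^2 \geq \log(2k(1+1/\e))/\pi$, with the factor $2k$ coming from expanding $(1 + 2e^{-\pi t^2} + \cdots)^k \approx 1 + 2k e^{-\pi t^2}$ and the $1 + 1/\e$ arising from the geometric tail.

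The main obstacle is the sharp constant: a crude estimate uses $\|B\|_{\mathrm{op}}$, which can be as large as $\sqrt{k}\,\lambda$ when the $v_i$ are nearly collinear, and this would produce a spurious factor of $\sqrt{k}$ in the final bound. To recover the stated inequality without this factor, one must choose the $v_i$ more carefully (for instance, so that the successive Gram-Schmidt lengths are comparable to the successive minima $\lambda_1,\ldots,\lambda_k$) or invoke Banaszczyk's transference/Poisson-summation machinery to relate the dual Gaussian sum directly to a Gaussian sum on the primal side, where the short vectors $v_i$ can be used individually rather than collectively through $\|B\|_{\mathrm{op}}$. Overcoming this is where the real content of the lemma lies; once the pointwise estimate $\|x\|_2 \geq \|y\|_\infty / \lambda$ (or a suitable refinement) is in hand, the theta-series bound in the preceding paragraph finishes the proof.
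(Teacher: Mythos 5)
First, a point of context: the paper does not prove this lemma. It is cited verbatim from Micciancio and Regev (as the bracketed attribution indicates), and the only contribution the paper makes is the parenthetical remark immediately following the statement, which extends the result from full-rank lattices to general rank via the isometry $\Phi$. So there is no proof in the paper to compare against; what follows is an assessment of your argument on its own terms.

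Your opening moves are correct and are indeed the beginning of the Micciancio--Regev proof: choose $k$ linearly independent vectors $v_1,\ldots,v_k\in\Lambda$ of length at most $\lambda := \lambda_k(\Lambda)$, pass to the sublattice $\Lambda'$ they generate, and use $\Lambda^\ast\subseteq(\Lambda')^\ast$ to get $f_\Lambda\leq f_{\Lambda'}$. The identification of $(\Lambda')^\ast$ with $\Z^k$ via $y_i = \langle v_i, x\rangle$ is also right, as is the pointwise estimate $\|x\|_2 \geq \|y\|_\infty/\lambda$ (a one-line Cauchy--Schwarz). The problem is your final claim that this pointwise estimate, fed into the theta-series bound, ``finishes the proof.'' It does not, for two reasons. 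The theta-series computation you cite (``$(1+2e^{-\pi t^2}+\cdots)^k\approx 1+2k\,e^{-\pi t^2}$'') is the expansion of $\prod_{i=1}^k \Theta(t) = \sum_{y\in\Z^k} e^{-\pi t^2\|y\|_2^2}$, which requires a bound of the form $\|x\|_2^2\gtrsim \|y\|_2^2/\lambda^2$; but the only bound you have is $\|x\|_2^2\geq\|y\|_\infty^2/\lambda^2$, and passing from $\|y\|_\infty$ to $\|y\|_2$ costs a factor of $\sqrt{k}$ in the norm, which is exactly the spurious factor you were trying to avoid. Alternatively, if you sum $\sum_{y\neq 0}e^{-c\|y\|_\infty^2}$ directly, the shell $\|y\|_\infty=1$ already contributes $(3^k-1)e^{-c}$, forcing $c\gtrsim k\log 3$ rather than $c\gtrsim\log k$, so this route also loses a factor of $\sqrt{k}$ in $\eta_\e$. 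Neither basis reduction nor appealing vaguely to ``Banaszczyk's machinery'' rescues the argument without a genuinely new idea, and the one you hint at (the $\ell^\infty$ estimate) is not it.

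What actually closes the gap in the Micciancio--Regev proof is an induction that peels off one coordinate at a time, using the fact (itself a short Poisson summation argument) that for any lattice $M$ and any coset $c+M$, one has $\rho_{1/s}(c+M)\leq\rho_{1/s}(M)$. Writing $u_1 = v_1/\|v_1\|$ and decomposing $(\Lambda')^\ast$ into the cosets $\{x : \langle x,v_1\rangle = m\}$, $m\in\Z$, each coset lies in an affine hyperplane at distance $|m|/\|v_1\|$ from the origin along $u_1$, and its Gaussian mass is at most $e^{-\pi s^2 m^2/\|v_1\|^2}$ times the Gaussian mass of the $(k-1)$-dimensional dual lattice $(\Lambda')^\ast\cap u_1^\perp$, which one handles by induction using the shorter vectors $\pi(v_2),\ldots,\pi(v_k)$ (orthogonal projections onto $u_1^\perp$, which only shrink). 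This yields $\rho_{1/s}\bigl((\Lambda')^\ast\bigr)\leq\prod_{i=1}^k\Theta(s/\|v_i\|)\leq\Theta(s/\lambda)^k$ with no loss, from which the elementary theta estimate you wrote down gives the stated bound. The crucial ingredient, then, is not a sharper lower bound on $\|x\|_2$ but the coset-mass monotonicity lemma, which lets you avoid ever inverting the Gram matrix of the $v_i$'s; your proposal never identifies this step, and without it the argument cannot be completed.
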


(In \cite{MR07} the lemma is stated for lattices of full rank. It extends to the general case by the previous remark about the isomorphism $\Phi$.)

If we combine Theorem \ref{t.smoothing.revised} with Lemma \ref{MR3.3} and bound $\lambda_k(\Lambda)$ using the entries of the Laplacian matrix, we obtain the following bound on mixing time.

\begin{theorem} \label{smoothing-mixing-time}
Let $d_\ast$ be the second-highest vertex degree in $G$. For any $\e > 0$,
\[
\max\left\{ \|H_{id}^t - U\|_2^2, \, \|P_{id}^t - U \|_2^2 \right\} \leq \e \enspace \text{\em for all } t \geq \frac{1}{16} (d_\ast^2 + d_\ast) n \log[2(n-1)(1 + 1/\e)].
\]
\end{theorem}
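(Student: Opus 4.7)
The plan is to chain together Theorem \ref{t.smoothing.revised} (specifically the bound \eqref{t-lower-bound-2}, which covers both chains simultaneously) with Lemma \ref{MR3.3} applied to the lattice $\Lambda = \overbar{\Delta}\Z^n$, and then to bound the invariant $\lambda_{n-1}(\Lambda)$ directly using the columns of $\overbar{\Delta}$.

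First I would note that the lattice $\overbar{\Delta}\Z^n$ has rank $n-1$: its span is $\R_0^n$, since the columns of $\overbar{\Delta}$ all lie in $\R_0^n$, and since $G$ is connected the kernel of $\overbar{\Delta}$ is one-dimensional (spanned by $\one$), so the image has dimension $n-1$. Applying Lemma \ref{MR3.3} with $k = n-1$ gives
\[
\eta_\e^2(\overbar{\Delta}\Z^n) \leq \frac{\log[2(n-1)(1 + 1/\e)]}{\pi}\, \lambda_{n-1}(\overbar{\Delta}\Z^n)^2.
\]
Plugging this into \eqref{t-lower-bound-2} of Theorem \ref{t.smoothing.revised} cancels the factor of $\pi$ and yields exactly the form $\frac{1}{16}\lambda_{n-1}(\overbar{\Delta}\Z^n)^2\, n \log[2(n-1)(1+1/\e)]$. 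It therefore suffices to show that $\lambda_{n-1}(\overbar{\Delta}\Z^n)^2 \leq d_\ast^2 + d_\ast$.

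For this, I would exhibit $n-1$ linearly independent short vectors in $\overbar{\Delta}\Z^n$ of the required length. Assume without loss of generality that $v_n$ is a vertex of maximum degree, so that $\deg(v_j) \leq d_\ast$ for every $j \leq n-1$. Consider the columns $\overbar{\Delta} e_1, \ldots, \overbar{\Delta} e_{n-1}$. Column $j$ has entry $\deg(v_j)$ at position $j$ and entry $-1$ at each neighbor of $v_j$, so
\[
\|\overbar{\Delta} e_j\|_2^2 = \deg(v_j)^2 + \deg(v_j) \leq d_\ast^2 + d_\ast.
\]
For linear independence, suppose $\sum_{j=1}^{n-1} c_j \overbar{\Delta} e_j = \zero$. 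Then $\sum c_j e_j \in \ker\overbar{\Delta} = \Span(\one)$, but $\sum c_j e_j$ has zero $n$th coordinate, so $\sum c_j e_j = \zero$ and all $c_j = 0$. Thus these $n-1$ vectors of squared length at most $d_\ast^2 + d_\ast$ are linearly independent, giving $\lambda_{n-1}(\overbar{\Delta}\Z^n) \leq \sqrt{d_\ast^2 + d_\ast}$ and completing the proof.

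There is no real obstacle here; the only subtle point is recognizing that Lemma \ref{MR3.3} should be applied with $k$ equal to the rank of the lattice rather than the ambient dimension $n$, which is why the logarithmic factor comes out as $\log[2(n-1)(1+1/\e)]$ rather than $\log[2n(1+1/\e)]$. Choosing the sink at a vertex of maximum degree so that the omitted column is the longest is what produces the clean bound $d_\ast^2 + d_\ast$ instead of $d_{\max}^2 + d_{\max}$.
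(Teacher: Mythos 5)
Your proof is correct and follows essentially the same route as the paper's: apply the bound \eqref{t-lower-bound-2} of Theorem~\ref{t.smoothing.revised} together with Lemma~\ref{MR3.3} for $\Lambda = \overbar{\Delta}\Z^n$ (rank $n-1$), then bound $\lambda_{n-1}(\overbar{\Delta}\Z^n)$ by the $n-1$ columns of $\overbar{\Delta}$ omitting the one at a maximum-degree vertex; you merely spell out the linear independence, which the paper asserts as an observation. One small caution on framing: writing ``assume WLOG that $v_n$ has maximum degree'' amounts to relocating the sink, and while this is harmless here because $\overbar{\Delta}$, $\overbar{\Delta}\Z^n$, $\eta_\e$, and the threshold in \eqref{t-lower-bound-2} are all sink-independent, the continuous-time quantity $\|H_{id}^t - U\|_2^2$ is \emph{not} sink-independent; the cleaner phrasing (as in the paper) is simply to omit the column of a maximum-degree vertex when choosing your $n-1$ short generators, without any mention of moving the sink.
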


\begin{proof}
To bound $\lambda_{n-1}(\overbar{\Delta} \Z^n)$, observe that $\overbar{\Delta} \Z^n$ is generated by any $n-1$ columns of $\overbar{\Delta}$. We omit the column corresponding to the highest-degree vertex. All the other columns satisfy $\|x\|_2^2 \leq d_\ast^2 + d_\ast$, implying that $\lambda_{n-1}^2(\overbar{\Delta} \Z^n) \leq d_\ast^2 + d_\ast$.

Lemma \ref{MR3.3} yields
\begin{equation}
\eta_\e^2(\overbar{\Delta} \Z^n) \leq \frac{1}{\pi} (d_\ast^2 + d_\ast) \log[ 2(n-1)(1 + 1/\e)]. \label{smoothing-discrete-bound}
\end{equation}
The proof is finished by plugging this bound into \eqref{t-lower-bound-2}.
\end{proof}

\old{%%%
\begin{remark}
\label{highest-nonsink-degree}
By using \eqref{t-lower-bound-1} in place of \eqref{t-lower-bound-2}, we could prove that
\[
\|H_{id}^t - U \|_2^2 \leq \e \enspace \text{for all } t \geq \frac{1}{16} (d_\circledast^2 + d_\circledast) n \log[2(n-1)(1 + 1/\e)],
\]
where $d_\circledast$ is the highest degree of a non-sink vertex. Since $d_\circledast \geq d_\ast$, this bound cannot be better than what was shown above. However, this reasoning says nothing about whether \eqref{t-lower-bound-1} or \eqref{t-lower-bound-2} gives a tighter bound on $\|H_{id}^t - U\|_2^2$ in general.
\end{remark}
}

Theorem \ref{smoothing-mixing-time} shows that every sandpile chain on a graph $G$ with second-highest vertex degree $d_\ast$ has $L^2$ mixing time at most $\frac{1}{16} d_\ast^2 n \log n$ plus lower-order terms. When $G$ is the complete graph, we will see below that the order $d_\ast^2 n \log n$ is correct but the constant $\frac{1}{16}$ is not sharp. By contrast, when $G$ is the cycle graph or the `triangle with tail' (see Section \ref{Discrete vs Continuous}), the order $d_\ast^2 n \log n$ is not sharp. 

We now state a modified version of Theorem \ref{t.smoothing.revised} which improves the leading constant from $\pi/16$ to $1/4\pi$ at the cost of an extra term. As usual, $\eta_\e$ denotes the smoothing parameter, and $|\G|$ the order of the sandpile group. 

\begin{theorem}
\label{smoothing-optimized}
For any $\e > 0$,
\begin{gather*}
\|H_{id}^t - U\|_2^2 \leq 2\e \enspace \text{\em for all } t \geq \frac{1}{4\pi} n \cdot \eta_\e^2(\Delta \Z^{n-1}) + \frac{\pi^2}{48} n \log\left( \frac{|\G|}{\e} \right), \\
\max\left\{ \|H_{id}^t - U\|_2^2, \, \|P_{id}^t - U\|_2^2 \right\} \leq 2\e \enspace \text{\em for all } t \geq \frac{1}{4\pi} n \cdot \eta_\e^2(\overbar{\Delta} \Z^n) + \frac{\pi^2}{48} n \log\left( \frac{|\G|}{\e} \right).
\end{gather*}
\end{theorem}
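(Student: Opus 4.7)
My plan is to refine Theorem~\ref{t.smoothing.revised} by partitioning the sum $\sum_{h\neq 1} e^{-2t(1-\Re(\lambda_h))}$ according to the length of the shortest dual lattice representative $x_h \in \Delta^{-1}\Z^{n-1}$, and applying two different bounds on the two parts: the Taylor expansion $1 - \cos(2\pi x) \geq 2\pi^2 x^2 - \tfrac{2\pi^4}{3}x^4$ (valid for $|x| \leq 1/2$, by the Leibniz criterion applied to the cosine series) on short vectors, and the uniform bound $1 - \cos(2\pi x) \geq 8x^2$ from Theorem~\ref{dual-length-1} on long ones. The leading factor $2\pi^2$ in the Taylor expansion is precisely what is needed to turn $\pi/16$ into $1/(4\pi)$.

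Write $s = \eta_\e(\Delta\Z^{n-1})$, $t_1 = \frac{ns^2}{4\pi}$, $t_2 = \frac{\pi^2 n}{48}\log(|\G|/\e)$, and $C = \frac{\pi^3\log(|\G|/\e)}{12 s^2}$. The natural threshold is
\[
\tau \;=\; \frac{3}{\pi^2}\cdot\frac{C}{1+C} \;\in\; \Bigl(0,\tfrac{3}{\pi^2}\Bigr),
\]
designed to balance the two contributions. Assuming $t \geq t_1 + t_2$, I would estimate the ``long'' sum over $\{h : \|x_h\|_2^2 > \tau\}$ and the ``short'' sum over $\{h : \|x_h\|_2^2 \leq \tau\}$ separately and show that each is at most $\e$, giving $\|H_{id}^t - U\|_2^2 \leq 2\e$.

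For the long part, Theorem~\ref{dual-length-1} gives $1 - \Re(\lambda_h) > 8\tau/n$, so
\[
\sum_{\|x_h\|_2^2 > \tau} e^{-2t(1-\Re(\lambda_h))} \;\leq\; (|\G|-1)\,e^{-16 t\tau/n} \;\leq\; \e,
\]
where the final inequality uses the algebraic identity $16(t_1+t_2)\tau/n = \log(|\G|/\e)$ forced by the choice of $\tau$. For the short part, the Taylor bound together with $\|x_h\|_4^4 \leq \|x_h\|_2^4 \leq \tau\|x_h\|_2^2$ yields
\[
1 - \Re(\lambda_h) \;\geq\; \frac{2\pi^2\|x_h\|_2^2}{n}\Bigl(1 - \frac{\pi^2\tau}{3}\Bigr),
\]
and the same $\tau$ arranges that $\tfrac{4\pi^2(t_1+t_2)(1-\pi^2\tau/3)}{n} = \pi s^2$, which bounds the short sum by the smoothing-parameter identity
\[
\sum_{x\in\Delta^{-1}\Z^{n-1}\setminus\{0\}} e^{-\pi s^2\|x\|_2^2} = \e.
\]

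The hard part will be the bookkeeping that simultaneously recasts both time requirements, $\tfrac{n\log(|\G|/\e)}{16\tau}$ (long) and $\tfrac{ns^2}{4\pi(1-\pi^2\tau/3)}$ (short), as the same value $t_1 + t_2$; this is exactly what forces the specific form of $\tau$ above and what extracts the improved constants $1/(4\pi)$ and $\pi^2/48$ in a matched pair. The discrete-time assertion is then proved by the identical strategy, with Theorem~\ref{dual-length-2}, the pseudoinverse dual lattice $\overbar{\Delta}^+\Z_0^n$, and the lower bound on $1-|\lambda_h|$ replacing their continuous-time counterparts.
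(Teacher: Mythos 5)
Your continuous-time argument is correct and does achieve the stated constants by a genuinely different partition than the paper's. The paper splits $\HH$ by the $\ell^\infty$-type condition $\max_j 2\pi|x_j| \lessgtr b$ and uses Lemma~\ref{cos-upper-bound} with $r=b$ to get a \emph{coordinate-wise uniform} cosine bound; you split by the aggregate norm $\|x_h\|_2^2 \lessgtr \tau$ and instead control the quartic correction via $\|x\|_4^4 \le \|x\|_2^4 \le \tau\|x\|_2^2$. Your long-class estimate (via Theorem~\ref{dual-length-1}) and the paper's long-class estimate (via a single large coordinate) happen to yield the same value $8\tau/n$, and your algebra checks out: with $\tau = \frac{3}{\pi^2}\cdot\frac{C}{1+C}$ both requirements $\frac{nL}{16\tau}$ and $\frac{ns^2}{4\pi(1-\pi^2\tau/3)}$ collapse to $\frac{n(12s^2 + \pi^3 L)}{48\pi} = \frac{ns^2}{4\pi} + \frac{\pi^2 n L}{48}$. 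Both routes thus extract the same constant pair, and which one is ``cleaner'' is a matter of taste.

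The discrete-time case, however, is \emph{not} a routine substitution, and as written your sketch has a gap there. Writing $\lambda_h = |\lambda_h|e^{2\pi i\theta}$ and choosing $y$ with $h(v_j)=e^{2\pi i y_j}$, $|y_j-\theta|\le 1/2$, the identity $1-|\lambda_h| = \frac{1}{n}\sum_j[1-\cos(2\pi(y_j-\theta))]$ applies the cosine Taylor bound to the vector $v = y-\theta\one$, \emph{not} to the minimal dual lattice vector $x_h\in\overbar{\Delta}^+\Z_0^n$. The proof of Theorem~\ref{dual-length-2} gives $\|x_h\|_2 \le \|v\|_2$ but no inequality in the other direction, so the condition $\|x_h\|_2^2\le\tau$ does not control $\|v\|_4^4$, and the short-class bound $1-|\lambda_h|\ge \frac{2\pi^2}{n}(1-\frac{\pi^2\tau}{3})\|x_h\|_2^2$ does not follow. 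The repair is to partition by $\|v\|_2^2 \lessgtr \tau$ instead: for $\|v\|_2^2\le\tau$ one gets $\|v\|_4^4\le\tau\|v\|_2^2$, hence $1-|\lambda_h|\ge \frac{2\pi^2}{n}(1-\frac{\pi^2\tau}{3})\|v\|_2^2 \ge \frac{2\pi^2}{n}(1-\frac{\pi^2\tau}{3})\|x_h\|_2^2$, and the smoothing-parameter sum is still over $x_h$; for $\|v\|_2^2>\tau$ you need the inequality $1-|\lambda_h|\ge 8\|v\|_2^2/n$, which is established inside the proof of Theorem~\ref{dual-length-2} (the theorem's statement, in terms of $\|x_h\|_2$ alone, is weaker). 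With this modification your discrete argument goes through line for line. The paper avoids the issue entirely by partitioning on $\max_j 2\pi|y_j-\theta|$ from the start, a quantity that enters both the short- and long-class bounds directly.
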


When we combine the second statement of Theorem \ref{smoothing-optimized} with \eqref{smoothing-discrete-bound} and the inequality $\log(|\G|) \leq (n-1) \log(d_\ast)$, we obtain the following result.

\begin{theorem}
\label{cutoff-upper-bd}
Let $d_\ast$ be as in Theorem \ref{smoothing-mixing-time}. For any $c \geq 1$,
\[
\max\left\{ \|H_{id}^t - U\|_2^2, \|P_{id}^t - U\|_2^2 \right\} \leq e^{-c} \text{ \em for all } t \geq \frac{1}{4\pi^2} d_\ast^2 n \log(n) + \frac{n^2}{4} \log(d_\ast) + c d_\ast^2 n.
\]
\end{theorem}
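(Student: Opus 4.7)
The result is presented as a direct consequence of three ingredients explicitly flagged in the paragraph preceding the theorem: the second conclusion of Theorem~\ref{smoothing-optimized}, the smoothing-parameter estimate~\eqref{smoothing-discrete-bound}, and the bound $\log|\G| \leq (n-1)\log d_\ast$. My plan is to calibrate the parameter $\e$ in Theorem~\ref{smoothing-optimized} so that $2\e = e^{-c}$, substitute the other two ingredients, and absorb all lower-order terms into $c d_\ast^2 n$ using $c \geq 1$.

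First I would choose $\e = e^{-c}/2$, which puts the target tolerance $2\e = e^{-c}$ on the left-hand side. This yields $\log(1/\e) = c + \log 2$ and $\log(1 + 1/\e) = \log(1 + 2e^c) \leq c + \log 3$ (valid for any $c \geq 0$). Substituting \eqref{smoothing-discrete-bound} into the smoothing term gives
\[
\frac{n}{4\pi}\eta_\e^2(\overbar{\Delta}\Z^n) \;\leq\; \frac{n(d_\ast^2 + d_\ast)}{4\pi^2}\bigl[\log n + c + O(1)\bigr],
\]
whose leading piece $\tfrac{1}{4\pi^2} d_\ast^2 n \log n$ matches the first term of the target threshold exactly. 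For the second summand in Theorem~\ref{smoothing-optimized}, I would use $\log|\G| \leq (n-1)\log d_\ast$ together with the elementary inequality $\pi^2/48 < 1/4$ (since $\pi^2 < 12$) to get
\[
\frac{\pi^2 n}{48}\log(|\G|/\e) \;\leq\; \frac{n^2 \log d_\ast}{4} \;-\; \Bigl(\tfrac14 - \tfrac{\pi^2}{48}\Bigr) n^2 \log d_\ast \;+\; \tfrac{\pi^2 n}{48}(c + \log 2).
\]
The principal piece matches the second term of the target threshold and leaves a strictly positive slack of order $n^2 \log d_\ast$.

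The remaining task is pure bookkeeping: I must verify that the collection of residuals --- namely $\tfrac{nd_\ast \log n}{4\pi^2}$, terms of order $nd_\ast^2$ and $cnd_\ast^2$, and the $\tfrac{\pi^2 n(c+\log 2)}{48}$ piece --- fits inside the $c d_\ast^2 n$ budget together with the slack identified above. The $O(nd_\ast^2)$ and $O(cnd_\ast^2)$ terms go directly into $c d_\ast^2 n$ since $c \geq 1$ and $d_\ast \geq 2$ for any non-tree graph (and trees are trivial, as $|\G| = 1$). The one mildly delicate piece is the cross term $\tfrac{nd_\ast \log n}{4\pi^2}$, which I would handle by a case split: when $d_\ast \geq \log n$, the inequality $d_\ast \log n \leq d_\ast^2$ absorbs it into the $c d_\ast^2 n$ budget; when $d_\ast < \log n$, I would use $\log d_\ast \geq \log 2$ together with $d_\ast \log n < (\log n)^2 \ll n$ to absorb it into the $\bigl(\tfrac14 - \tfrac{\pi^2}{48}\bigr)n^2 \log d_\ast$ slack. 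This case analysis is the main obstacle to pinning down the constants cleanly; once carried out it closes the proof.
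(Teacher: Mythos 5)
Your proposal follows the paper's overall plan exactly --- second statement of Theorem~\ref{smoothing-optimized}, then \eqref{smoothing-discrete-bound}, then $\log|\G|\leq(n-1)\log d_\ast$ --- but the bookkeeping diverges in two ways. First, the paper calibrates $\e=\tfrac12 e^{-2c}$ (so that $\sqrt{2\e}=e^{-c}$), proving the slightly stronger bound $\|\cdot\|_2\leq e^{-c}$, whereas you take $\e=\tfrac12 e^{-c}$ so that $2\e=e^{-c}$ hits $\|\cdot\|_2^2$ directly; both are fine. Second and more substantively, the paper dispatches the cross term $\tfrac{1}{4\pi^2}d_\ast n\log n$ using the single inequality $d_\ast\log n\leq n\log d_\ast$ (valid for $2\leq d_\ast<n$, $n\geq 4$), together with the step $\log(1/\e)<d_\ast^2\log(2+2/\e)$ to convert the $\log(1/\e)$ residue into a $d_\ast^2 n$-scaled term; this avoids any case analysis but forces a separate check at $n=3$ (the only $n=3$ graph with nontrivial sandpile group is $C_3$, where the paper's monotonicity inequality $d_\ast\log n\leq n\log d_\ast$ actually \emph{fails}). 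Your route instead splits on $d_\ast\geq\log n$ versus $d_\ast<\log n$; this is correct and, unlike the paper's trick, does not require the $n\geq4$ hypothesis, but you should replace the informal ``$(\log n)^2\ll n$'' with the explicit verification that $(\log n)^2\leq 4\pi^2\bigl(\tfrac14-\tfrac{\pi^2}{48}\bigr)\log(2)\,n$ (which does hold for all $n\geq1$), since that is precisely the estimate the second case leans on. Both approaches are valid; the paper's absorbs the cross term with one clean inequality at the price of a small-$n$ case, while yours trades the small-$n$ case for a two-branch argument whose constants must be tracked slightly more carefully.
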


We will show in Section \ref{Complete graph} that the leading term $\frac{1}{4\pi^2} d_\ast^2 n \log n$ is sharp for the discrete time sandpile chain on the complete graph. In general, Theorem \ref{cutoff-upper-bd} is better than Theorem \ref{smoothing-mixing-time} when $d_\ast \gg \sqrt{n}$ and worse when $d_\ast \ll \sqrt{n}$.

\begin{proof}[Proof of Theorem \ref{smoothing-optimized}]
We start by proving the second statement of the theorem.

Fix $b > 0$; we will specify its value later. We partition the set $\HH$ of multiplicative harmonic functions into two subsets $\HH_1, \HH_2$ as follows. Given $h \in \HH$, write $\lambda_h = re^{2\pi i \theta}$ with $r \geq 0$. (If $r = 0$ then let $\theta = 0$.) Choose $y \in \R^n$ such that for all $j$, $h(v_j) = e^{2\pi i y_j}$ and $|y_j - \theta| \leq 1/2$. As in the proof of Theorem \ref{dual-length-2},
\[
|\lambda_h| = \frac{1}{n} \sum_{j=1}^n \cos(2\pi(y_j - \theta)).
\]
Now define
\begin{align*}
\HH_1 &= \{h \in \HH \st \max_{1 \leq j \leq n} 2\pi|y_j - \theta| \leq b \}, \\
\HH_2 &= \{h \in \HH \st \max_{1 \leq j \leq n} 2\pi|y_j - \theta| > b \}.
\end{align*}
Note that $\HH_2$ is empty if $b \geq \pi$. The reason for the partition is that when $h \in \HH_1$, the cosine approximation from Lemma \ref{cos-upper-bound} is improved; and when $h \in \HH_2$, the existence of $j$ for which $2\pi |y_j - \theta| > b$ makes $|\lambda_h|$ relatively far from $1$.

For $h \in \HH_1$, Lemma \ref{cos-upper-bound} yields
\[
|\lambda_h| \leq \frac{1}{n} \sum_{j=1}^n \left[ 1 - \frac{1 - \cos(b)}{b^2} \cdot 4\pi^2 (y_j - \theta)^2 \right] \leq 1 - \frac{4\pi^2}{n} \left( \frac{1}{2} - \frac{b^2}{24} \right) \sum_{j=1}^n (y_j - \theta)^2,
\]
using that $1 - \cos(b) \geq b^2/2 - b^4/24$ in the second inequality. Compare with \eqref{r-upper-bound}. Let $x_h \in \overbar{\Delta}^+ \Z_0^n$ be a dual lattice vector of minimal Euclidean length corresponding to $h$. Arguing as in the proof of Theorem \ref{dual-length-2}, we conclude that
\begin{equation} \label{H1-lower-bound}
1 - |\lambda_h| \geq \frac{4\pi^2}{n} \left( \frac{1}{2} - \frac{b^2}{24} \right) \|x_h\|_2^2.
\end{equation}

For $h \in \HH_2$, there exists $j$ such that $b < 2\pi|y_j - \theta| \leq \pi$, which gives the bound
\[
|\lambda_h| \leq 1 - \frac{1}{n} +\frac{1}{n} \cos(b).
\]
Since $b < \pi$, Lemma \ref{cos-upper-bound} says that $\cos(b) \leq 1 - 2b^2 / \pi^2$, so
\begin{equation} \label{H2-lower-bound}
1 - |\lambda_h| \geq \frac{1}{n} \cdot \frac{2b^2}{\pi^2}.
\end{equation}

We are now ready to bound $\max\left\{ \|H_{id}^t - U\|_2^2, \, \|P_{id}^t - U\|_2^2 \right\}$. To begin,
\[
\begin{split}
\max\left\{ \|H_{id}^t - U\|_2^2, \, \|P_{id}^t - U\|_2^2 \right\} &\leq \sum_{h \in \HH \setminus \{1\}} e^{-2t(1 - |\lambda_h|)} \\
&= \sum_{h \in \HH_1 \setminus \{1\}} e^{-2t(1 - |\lambda_h|)} + \sum_{h \in \HH_2} e^{-2t(1 - |\lambda_h|)}.
\end{split}
\]
For the first sum, let $W_1 = \{x_h \st h \in \HH_1\} \subseteq \overbar{\Delta}^+ \Z_0^n$. By \eqref{H1-lower-bound},
\[
\sum_{h \in \HH_1 \setminus \{1\}} e^{-2t(1 - |\lambda_h|)} \leq \sum_{x \in W_1 \setminus \{\zero\}} e^{-2t \cdot \frac{4\pi^2}{n} \left( \frac{1}{2} - \frac{b^2}{24} \right) \|x\|_2^2}.
\]
If we choose $b$ so that
\begin{equation}
\label{b-condition-1}
2t \cdot \frac{4\pi^2}{n} \left( \frac{1}{2} - \frac{b^2}{24} \right) \geq \pi \eta_\e^2(\overbar{\Delta} \Z^n),
\end{equation}
then this sum will be at most $\e$ by the definition of the smoothing parameter.

The second sum is zero when $b \geq \pi$. When $b < \pi$, we use \eqref{H2-lower-bound} and the inequality $|\HH_2| \leq |\G|$ to obtain
\[
\sum_{h \in \HH_2} e^{-2t(1 - |\lambda_h|)} \leq |\G| e^{-\frac{4tb^2}{\pi^2 n}} = e^{\log(|\G|) - \frac{4tb^2}{\pi^2 n}}.
\]
This quantity will be at most $\e$ provided that
\begin{equation}
\label{b-condition-2}
\log |\G| - \frac{4tb^2}{\pi^2 n} \leq \log \e.
\end{equation}

To prove the theorem, we find a value of $b$ that satisfies both \eqref{b-condition-1} and \eqref{b-condition-2}. Set
\[
b = \left[ \frac{\pi^2 n}{4t} \log\left( \frac{|\G|}{\e} \right) \right]^{1/2},
\]
so that equality holds in \eqref{b-condition-2}. Then \eqref{b-condition-1} holds if and only if $t$ satisfies the lower bound in the second statement of Theorem \ref{smoothing-optimized}. This completes the proof.

For the first statement of Theorem \ref{smoothing-optimized}, we retain the same value of $b$. Given $h \in \HH$, choose $x \in \Delta^{-1} \Z^{n-1}$ such that $h(v_j) = e^{2\pi i x_j}$ and $|x_j| \leq 1/2$ for all $j$. Define
\begin{align*}
\HH_1 &= \{h \in \HH \st \max_{1 \leq j \leq n-1} 2\pi |x_j| \leq b \}, \\
\HH_2 &= \{h \in \HH \st \max_{1 \leq j \leq n-1} 2\pi |x_j| > b \}.
\end{align*}
Since
\[
1 - \Re(\lambda_h) = \frac{1}{n} \sum_{j=1}^{n-1} [1 - \cos(2\pi x_j)],
\]
for any $h \in \HH_1$ we can apply Lemma \ref{cos-upper-bound} to get
\[
1 - \Re(\lambda_h) \geq \frac{4\pi^2}{n} \left( \frac{1}{2} - \frac{b^2}{24} \right) \|x\|_2^2
\]
as in the previous argument. Likewise, for any $h \in \HH_2$ we have
\[
1 - \Re(\lambda_h) \geq \frac{1}{n}[1 - \cos(b)] \geq \frac{1}{n} \cdot \frac{2b^2}{\pi^2},
\]
keeping in mind that $\HH_2$ can only be nonempty when $b < \pi$.

With this preparation, we decompose
\[
\|H_{id}^t - U\|_2^2 = \sum_{h \in \HH_1 \setminus \{1\}} e^{-2t(1 - \Re(\lambda_h))} + \sum_{h \in \HH_2} e^{-2t(1 - \Re(\lambda_h))}.
\]
From here the proof is the same as before, substituting $\Delta \Z^{n-1}$ for $\overbar{\Delta} \Z^n$.
\end{proof}

The proof of Theorem \ref{cutoff-upper-bd} is elementary. No effort has been made to optimize the constants in the non-leading terms.

\begin{proof}[Proof of Theorem \ref{cutoff-upper-bd}]
The only graph on $3$ vertices with nontrivial sandpile group is the cycle $C_3$, for which the result can be checked directly. Therefore we may assume that $n \geq 4$.

Starting from the second statement of Theorem \ref{smoothing-optimized}, we use \eqref{smoothing-discrete-bound} along with the inequality $\log |\G| \leq (n-1)\log(d_\ast)$. For any $\e > 0$, if
\begin{equation}
\label{t-lower-bound-3}
t \geq \frac{1}{4\pi^2} (d_\ast^2 + d_\ast) n \log[2(n-1)(1 + 1/\e)] + \frac{\pi^2}{48} n[ (n-1)\log(d_\ast) + \log(1/\e)],
\end{equation}
then $\max\left\{ \|H_{id}^t - U\|_2^2, \, \|P_{id}^t - U\|_2^2 \right\} \leq 2\e$. Since $\log(1/\e) < d_\ast^2 \log(2 + 2/\e)$, the right side of \eqref{t-lower-bound-3} is less than
\begin{equation}
\label{t-lower-bound-4}
\frac{1}{4\pi^2} (d_\ast^2 + d_\ast) n [\log(n) + \log(2 + 2/\e)] + \frac{\pi^2}{48}[n^2 \log(d_\ast) + d_\ast^2 n \log(2 + 2/\e)].
\end{equation}
Using the bound $d_\ast \log(n) \leq n \log(d_\ast)$ (since $2 \leq d_\ast < n$ and $n \geq 4$) along with $d_\ast^2 + d_\ast < 2 d_\ast^2$, \eqref{t-lower-bound-4} is less than
\begin{equation}
\label{t-lower-bound-5}
\frac{1}{4\pi^2} d_\ast^2 n \log(n) + \left[ \frac{1}{4\pi^2} + \frac{\pi^2}{48} \right] n^2 \log(d_\ast) + \left[ \frac{2}{4\pi^2} + \frac{\pi^2}{48} \right] d_\ast^2 n \log(2 + 2/\e).
\end{equation}
Since
\[
\frac{1}{4\pi^2} + \frac{\pi^2}{48} < \frac{1}{4},
\]
any $t \geq \frac{1}{4\pi^2} d_\ast^2 n \log(n) + \frac{1}{4} n^2 \log(d_\ast) + c d_\ast^2 n$ will be greater than \eqref{t-lower-bound-5} as long as
\begin{equation}
\label{c-lower-bound}
c \geq \left( \frac{2}{4\pi^2} + \frac{\pi^2}{48} \right) \log(2 + 2/\e).
\end{equation}
It remains to show that \eqref{c-lower-bound} holds for $\e = \frac{1}{2} e^{-2c}$ whenever $c \geq 1$, as this implies that $\max\left\{ \|H_{id}^t - U\|_2, \, \|P_{id}^t - U\|_2 \right\} \leq \sqrt{2\e} = e^{-c}$. Using $e^{-2c} \leq 1$ followed by $c \geq 1$, we have
\[
\log(2 + 2/\e) = \log\left( \frac{e^{-2c} + 2}{\frac{1}{2} e^{-2c}} \right) \leq \log(6) + 2c \leq \left( \frac{2}{4\pi^2} + \frac{\pi^2}{48} \right)^{-1} c,
\]
and the proof is complete.
\end{proof}

\section{Examples}
\label{Examples}

We conclude by illustrating our results in a variety of specific examples. We will use the notation  $[n]=\{1,\ldots,n\}$.

\subsection*{Cycle graph}
\label{Cycle graph}

If $G=C_{n}$ is an $n$-cycle with successive vertices $v_{1},\ldots,v_{n-1},v_{n}=s$, then for each 
$n$th root of unity $\omega$, the function defined by $h_{\omega}(v_{k})=\omega^{k}$ is easily 
seen to be multiplicative harmonic. 
As there are $n$ spanning trees in an $n$-cycle, this completely accounts for the characters of 
$\G(C_{n})$. Moreover, this shows that $\G(C_n) \cong \HH(C_n) \cong \Z / n\Z$.

Since $\sum_{k=1}^{n}\omega^{k}=0$ whenever $\omega$ 
is a nontrivial $n$th root of unity, we see that $\lambda=0$ is an eigenvalue of the discrete time sandpile chain with multiplicity 
$n-1$. Thus \eqref{L2_equality} shows that the chain is stationary after a single step.

\subsection*{Complete graph}
\label{Complete graph}

Let $G=K_{n}$ be the complete graph on $n\geq 3$ vertices, $v_{1},\ldots,v_{n-1},v_{n}=s$. 
Set $M=\left\{ z\in\left(\Z/n\Z\right)^{n}:\sum_{j=1}^{n-1}z_{j}=z_{n}=0\right\}$ and $\omega=e^{2\pi i / n}$. 
For every $z\in M$, the function $h_{z}$ defined by $h_{z}(v_{j})=\omega^{z_{j}}$ is multiplicative harmonic since 
\[
\prod_{j=1}^{n}h_{z}(v_{j})=\omega^{\sum_{j=1}^{n}z_{j}}=1
\]
and thus 
\[
\prod_{j\neq k}h_{z}(v_{j})=h_{z}(v_{k})^{-1}=h_{z}(v_{k})^{n-1}
\]
for all $k=1,\ldots,n$. 
Every element of $M$ is uniquely determined by specifying the first $n-2$ coordinates, so 
$\left|M\right|=n^{n-2}$.
As this is equal to the number of spanning trees of $K_{n}$ by Cayley's formula \cite{Dieter}, 
we have $\HH=\{h_{z}\}_{z\in M}$, so the eigenvalues of the discrete time sandpile chain on $G$ are $\lambda_{z}=\frac{1}{n}\sum_{j=1}^{n}\omega^{z_{j}}$ as $z$ ranges over $M$. This characterization of $\HH$ shows that $K_n$ has sandpile group $(\Z / n\Z)^{n-2}$.

By construction, no $z\in M \setminus \{\zero\}$ can have all coordinates in $\{0,1\}$ or $\{0,-1\}$, so $z_{\ast}=(1,-1,0,\ldots,0)$ gives the maximum modulus of a nontrivial eigenvalue. 
(Any permutation of the first $n-1$ coordinates of $z_{\ast}$ or 
of $\pm (2,1,\ldots,1,0)$ also gives a nontrivial eigenvalue of maximum modulus.) The inequality $\cos(x)\leq 1 - \frac{27}{8\pi^2}x^{2}$ for $\left|x\right|\leq 2\pi/3$ (Lemma \ref{cos-upper-bound}) gives
\[
\lambda_{z_{\ast}} =\frac{1}{n}\left(n-2+\omega+\omega^{-1}\right) = 1-\frac{2}{n}\left(1-\cos\left(\frac{2\pi}{n}\right)\right) \leq 1-\frac{27}{n^{3}}.
\]
Using $\cos(x)\geq1-x^{2}/2$, we see that the relaxation time of the sandpile chain is 
	\[ \trel (K_n) = \Theta(n^{3}). \] 

Now the standard bounds relating relaxation and mixing times (Proposition \ref{TV_eigval}) imply the order of $\tmix(K_n)$ is between $n^{3}$ and $n^{4}\log n$.  Our next goal is to prove Theorem~\ref{t.cutoff.intro}, which says that the truth is in between: $\tmix(K_n)$ has order $n^3 \log n$, and moreover the sandpile chain on $K_n$ exhibits cutoff at time $\frac{1}{4\pi^{2}} n^3\log n$.
The upper bound follows easily from Theorem~\ref{cutoff-upper-bd}, which gives:

\begin{prop}
\label{K_n_upper}
Let $P$ be the transition operator for the discrete time sandpile chain on $K_{n}$ and $U$ the stationary distribution. 
For any $c \geq 5/4$, if $t\geq\frac{1}{4\pi^{2}}n^{3}\log n+cn^{3}$, then 
$\| P_{id}^{t}-U \|_{\TV} \leq e^{-c}$.
\end{prop}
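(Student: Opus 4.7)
The plan is to invoke Theorem \ref{cutoff-upper-bd} with $d_\ast = n-1$ (since every vertex of $K_n$ has degree $n-1$), convert the resulting $L^2$ estimate to total variation via Cauchy--Schwarz, and verify that the threshold lies within the target $\frac{1}{4\pi^2}n^3 \log n + cn^3$.

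Applying Theorem \ref{cutoff-upper-bd} with the constant $c$ of the proposition yields $\|P_{id}^t - U\|_2 \leq e^{-c}$ (the sharper form actually established in its proof via the choice $\e = \frac{1}{2}e^{-2c}$) whenever
\[
t \;\geq\; \frac{(n-1)^2 n \log n}{4\pi^2} + \frac{n^2 \log(n-1)}{4} + c(n-1)^2 n.
\]
The inequality $\|\mu-\nu\|_\TV \leq \frac{1}{2}\|\mu-\nu\|_2$ then gives $\|P^t - U\|_\TV \leq \frac{1}{2}e^{-c} \leq e^{-c}$, so the task reduces to showing that this threshold is at most $\frac{1}{4\pi^2}n^3 \log n + cn^3$. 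Using $n^3 - (n-1)^2 n = n(2n-1)$, this is equivalent to
\[
\frac{n^2 \log(n-1)}{4} \;\leq\; \frac{n(2n-1)\log n}{4\pi^2} \;+\; c\,n(2n-1).
\]

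The two terms on the right contribute roughly $\frac{n^2 \log n}{2\pi^2} + 2cn^2$, and since the middle-term coefficient $\frac{1}{4}$ exceeds the leading-savings coefficient $\frac{1}{2\pi^2}$, the $c$-term is what bridges the gap; the hypothesis $c \geq 5/4$ is tuned precisely so that $2cn^2$ uniformly absorbs the $O(n^2 \log n)$ residual. To make this bookkeeping airtight for \emph{every} $n$ rather than only asymptotically, it is cleanest to route back through Theorem \ref{smoothing-optimized} using the exact value $|\G(K_n)| = n^{n-2}$ afforded by Cayley's formula (tighter than the $(n-1)\log d_\ast$ bound used in deriving Theorem \ref{cutoff-upper-bd}); together with the Micciancio--Regev bound $\eta_\e^2(\overbar\Delta \Z^n) \leq \frac{n(n-1)\log[2(n-1)(1+1/\e)]}{\pi}$ (obtained from the fact that the $n-1$ linearly independent columns of $\overbar\Delta$ each have squared length $n(n-1)$), a short computation with $\e = \frac{1}{2}e^{-2c}$ and $\log[2(n-1)(1+1/\e)] \leq \log(n-1) + 2c + \log 6$ yields an excess above the target of at most $-\frac{[(4\pi^2-2)c - \log 6]n^3}{4\pi^2} + \frac{\pi^2 n^2 \log n}{48} + O(cn)$, which is nonpositive for $c \geq 5/4$ and $n \geq 3$ since $(4\pi^2 - 2)(5/4) - \log 6 > 45$.

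The main obstacle is precisely this final verification: the leading constant $\frac{1}{4\pi^2}$ in the target matches that of the threshold only after all subsidiary bounds are combined tightly, so one cannot afford to be wasteful in any one of them. The constraint $c \geq 5/4$ is calibrated so that the $cn^3$ slack comfortably swallows the $O(n^2 \log n)$ lower-order residuals coming from the smoothing parameter estimate and the $\log|\G|$ contribution uniformly over $n$.
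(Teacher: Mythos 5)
Your first paragraph contains a genuine error that renders the argument as stated invalid; your second paragraph recovers with a valid route.

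The error: after reducing to
\[
\frac{n^2\log(n-1)}{4} \;\leq\; \frac{n(2n-1)\log n}{4\pi^2} + c\,n(2n-1),
\]
you assert that ``$2cn^2$ uniformly absorbs the $O(n^2\log n)$ residual.'' It does not. The left side is $\Theta(n^2\log n)$ with leading constant $\frac{1}{4}$; the first term on the right is $\Theta(n^2\log n)$ with leading constant $\frac{1}{2\pi^2} < \frac{1}{4}$; so the residual $\frac{n^2\log(n-1)}{4} - \frac{n(2n-1)\log n}{4\pi^2}$ is a \emph{positive} constant times $n^2\log n$, while $c\,n(2n-1)$ is only $O(n^2)$. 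The inequality therefore \emph{fails} once $\log n$ exceeds roughly $2c\big/\bigl(\frac{1}{4} - \frac{1}{2\pi^2}\bigr) \approx 10c$; for $c = 5/4$ this is already $n \gtrsim e^{12.5}$. This is an asymptotic breakdown, so the framing ``airtight for every $n$ rather than only asymptotically'' misdiagnoses what goes wrong.

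Your second paragraph, routing back through Theorem~\ref{smoothing-optimized} with the exact $|\G(K_n)| = n^{n-2}$ and the Micciancio--Regev bound applied with $\lambda_{n-1}^2(\overbar{\Delta}\Z^n) \leq n(n-1)$, is sound. The $n^3$ coefficient $-\frac{(4\pi^2-2)c - \log 6}{4\pi^2}$ is at most $-\frac{45}{4\pi^2}$ for $c \geq 5/4$, which dominates $\frac{\pi^2}{48}n^2\log n + O(cn)$ for all $n \geq 3$; the resulting bound $\|P_{id}^t-U\|_2^2 \leq 2\e = e^{-2c}$ combined with Cauchy--Schwarz then gives $\|P_{id}^t-U\|_\TV \leq \frac{1}{2} e^{-c} \leq e^{-c}$. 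So your conclusion holds once the false claim in the first paragraph is removed.

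For comparison, the paper offers no computation here, only the assertion that the result ``follows easily from Theorem~\ref{cutoff-upper-bd}.'' As the breakdown above shows, the literal implication with the same $c$ plugged in is false for large $n$. One way to make the ``easy'' application rigorous is to use the sharp form $\|P_{id}^t - U\|_2 \leq e^{-c'}$ established inside the proof of Theorem~\ref{cutoff-upper-bd} together with the factor $\frac{1}{2}$ in $\|\cdot\|_\TV \leq \frac{1}{2}\|\cdot\|_2$, applying it with $c' = \max(1, c - \log 2)$: the extra slack $\log 2 \cdot (n-1)^2 n = \Theta(n^3)$ then swamps the $\Theta(n^2\log n)$ residual. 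Your Theorem~\ref{smoothing-optimized} route achieves the same end more directly, at the cost of reopening that theorem's proof.
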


To obtain a matching lower bound, we turn to the eigenfunctions. 
The basic idea is to find a \emph{distinguishing statistic} $\varphi$ for which the distance between 
the pushforward measures $P_{id}^{t}\circ\varphi^{-1}$ and $U\circ\varphi^{-1}$ can be bounded 
from below using moment methods. Specifically, we appeal to Proposition 7.8 in \cite{LPW}, which
shows that for any $\varphi:\G\rightarrow\R$, 
\begin{equation}
\label{lower_bound_ineq}
\| P_{id}^{t}-U\|_{\TV}\geq 1-\frac{4}{4+R(t)}\text{ whenever }
R(t)\leq\frac{2\left(\EE_{P_{id}^{t}}[\varphi]-\EE_{U}[\varphi]\right)^{2}}
{\Var_{P_{id}^{t}}(\varphi)+\Var_{U}(\varphi)}.
\end{equation}
Natural candidates for $\varphi$ are eigenfunctions of 
$P$ corresponding to a large real eigenvalue $\lambda$. The reason for using such eigenfunctions is 
that if $X_{t}$ is distributed as $P_{id}^{t}$ and $Y$ has the uniform distribution, then 
$\EE\left[\varphi\left(X_{t}\right)\right]=\lambda^{t}\varphi(id)$ and $\EE\left[\varphi\left(Y\right)\right]=0$, 
so their difference is relatively large when $t$ is not too big. When $\lambda$ has high multiplicity, we can average over a basis of eigenfunctions to reduce the variances of $\varphi\left(X_{t}\right)$ and $\varphi\left(Y\right)$.

\begin{prop}
\label{K_n_lower}
Let $P$ and $U$ be as in Proposition \ref{K_n_upper}.
For any $c \geq 0$, we have
\[
\| P_{id}^{t}-U\| _{\TV} \geq 1 - \frac{100}{100 + e^{4\pi^2 c}} \enspace \text{\em for all } 0 \leq t \leq \frac{1}{4\pi^{2}}n^{3}\log n - cn^{3}.
\]
\end{prop}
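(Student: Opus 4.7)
The plan is to apply the second-moment lower bound~\eqref{lower_bound_ineq} using the real-valued distinguishing statistic
\[
\varphi(\eta) \;:=\; \sum_{\substack{i,j \in [n-1] \\ i \neq j}} f_{h_{e_i - e_j}}(\eta) \;=\; \Bigl|\sum_{i=1}^{n-1}\omega^{\eta(v_i)}\Bigr|^{2}-(n-1),
\]
where $\omega=e^{2\pi i/n}$. The $(n-1)(n-2)$ summands are distinct orthonormal characters, each with the subdominant eigenvalue $\lambda=1-\tfrac{2}{n}(1-\cos(2\pi/n))$, so $\varphi$ is a real eigenfunction of $P$ with eigenvalue $\lambda$. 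Then $\varphi(id)=(n-1)(n-2)$ yields $\EE_{P_{id}^t}[\varphi]=(n-1)(n-2)\lambda^t$, while orthonormality gives $\EE_U[\varphi]=0$ and $\Var_U(\varphi)=(n-1)(n-2)$.

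To lower-bound $\lambda^t$, first use $\sin(\pi/n)\leq\pi/n$ (so $\mu:=1-\lambda\leq 4\pi^2/n^3$) and $-\log(1-\mu)\leq\mu+\mu^2$. At $t=\tfrac{1}{4\pi^{2}}n^{3}\log n-cn^{3}$ this gives $t\mu\leq\log n-4\pi^{2}c$ and $t\mu^{2}=o(1)$, hence $\lambda^t\geq(1-o(1))\,e^{4\pi^{2}c}/n$ and $(\EE\varphi)^{2}\geq(1-o(1))\,n^{2}e^{8\pi^{2}c}$.

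The main work is to bound $\Var_{P_{id}^t}(\varphi)=\EE[\varphi^{2}]-(\EE\varphi)^{2}$. Using $f_{h}\overline{f_{h'}}=f_{hh'^{-1}}$, I would expand
\[
\varphi^{2}=\varphi\,\overline{\varphi}=\sum_{\substack{i\neq j \\ k\neq l}}f_{h_{e_i-e_j-e_k+e_l}},
\]
and partition the $(n-1)^{2}(n-2)^{2}$ ordered tuples $(i,j,k,l)$ by their coincidence pattern. Six types arise: the diagonal $(i,j)=(k,l)$ (contributing $(n-1)(n-2)$ copies of~$1$); all four distinct, giving eigenvalue $\tfrac{1}{n}(n-4+2\omega+2\omega^{-1})=2\lambda-1$; exactly one of $i=k$ or $j=l$, giving eigenvalue $\lambda$; exactly one of $i=l$ or $j=k$, giving $\lambda_{D}=\tfrac{1}{n}(n-3+\omega^{2}+2\omega^{-1})$ or its conjugate; and the double coincidence $i=l,\,j=k$, giving $\lambda_{F}=1-\tfrac{2}{n}(1-\cos(4\pi/n))$. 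The critical observation is the algebraic identity $2\lambda-1=\lambda^{2}-(1-\lambda)^{2}\leq\lambda^{2}$, which combined with $(n-3)(n-4)\leq(n-1)(n-2)$ implies the "all distinct" contribution is at most $(\EE\varphi)^{2}$ and subtracts away with a nonpositive remainder. The remaining non-diagonal pieces can be bounded using $|\lambda_{D}|,\lambda_{F}\leq\lambda$, which follows from the already-established fact that $\lambda$ achieves the maximum nontrivial modulus on $K_{n}$. This yields the clean estimate
\[
\Var_{P_{id}^t}(\varphi)\leq(n-1)(n-2)\bigl[\,1+(4n-11)\lambda^{t}\,\bigr].
\]

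Plugging into~\eqref{lower_bound_ineq}, the ratio
\[
R\;\geq\;\frac{2(n-1)^{2}(n-2)^{2}\lambda^{2t}}{(n-1)(n-2)\bigl[2+(4n-11)\lambda^{t}\bigr]}\;\geq\;\frac{(n-1)(n-2)\,\lambda^{t}}{4n}(1-o(1))\;\geq\;\tfrac{1}{4}e^{4\pi^{2}c}(1-o(1))
\]
for $n$ sufficiently large (using $\lambda^{t}\geq 1/(2n)$ throughout the stated range of $t$, so that $2n\lambda^{t}$ dominates $1$ in the denominator). Then $\|P_{id}^{t}-U\|_{\TV}\geq 1-4/(4+R)\geq 1-100/(100+e^{4\pi^{2}c})$, since $4/(4+x/4)\leq 100/(100+x)$ for $x\geq 0$; the finitely many small-$n$ exceptions can be absorbed by a slight weakening of constants. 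The hard part will be the variance enumeration together with the delicate near-cancellation of $\Theta(n^{4}\lambda^{2t})$-sized terms; the identity $2\lambda-1=\lambda^{2}-(1-\lambda)^{2}$ is what makes the cancellation transparent without an extended asymptotic comparison of $(2\lambda-1)^{t}$ against $\lambda^{2t}$.
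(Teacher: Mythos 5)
Your proposal reproduces the paper's argument almost exactly: same distinguishing statistic (an unnormalized version of the paper's $\varphi$, which averages the $\lambda_1$-eigenfunctions $f_{j,k}$ over $D_n$), same expansion of $\varphi^2$ by coincidence type of the tuple $(i,j,k,l)$, same key cancellation coming from $\lambda_3 = 2\lambda-1 \le \lambda^2$, and the same application of \cite[Prop.~7.8]{LPW}. Your shortcut of bounding $|\lambda_D|$ and $\lambda_F$ by appealing to the already-established maximality of $\lambda_1$ over nontrivial eigenvalues of $K_n$ is a clean substitute for the paper's explicit computation of $\lambda_1^2-|\lambda_4|^2$; this is a genuine, if small, simplification.

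The one place where your write-up does not close the argument is the claim that ``the finitely many small-$n$ exceptions can be absorbed by a slight weakening of constants.'' The statement fixes the constant $100$, so there is nothing to weaken: you must actually establish $R(t) \ge 0.04\, e^{4\pi^2 c}$ for \emph{every} $n\ge 3$ and every admissible $t$. Your chain of inequalities, written with $(1-o(1))$ factors, only does this for $n$ large. The paper resolves this by (i) disposing of $n=3$ directly, and (ii) for $n\ge 4$ showing quantitatively that $n\lambda_1^t \ge \alpha e^{4\pi^2 c}$ with $\alpha = e^{-4\pi^2\log(4)/64}$, which then feeds into $R(t) \ge \tfrac{2\alpha^2}{6+5\alpha} e^{4\pi^2 c} > 0.04\, e^{4\pi^2 c}$. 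You would need to replace the asymptotic step by a uniform bound of this type (or check the small cases by hand) to actually obtain the stated $1 - 100/(100+e^{4\pi^2 c})$.
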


Taken together, Propositions \ref{K_n_upper} and \ref{K_n_lower} show that the discrete time sandpile chain on $K_{n}$ exhibits cutoff at time $\frac{1}{4\pi^{2}}n^{3}\log n$ with cutoff window of size $O(n^3)$. Also, Proposition \ref{K_n_lower} implies the lower bound in Theorem \ref{t.cutoff.intro} since for all $c \geq 5/4$,
\[
\frac{100}{100 + e^{4\pi^2 c}} \leq \frac{100}{e^{4\pi^2 c}} \leq e^{-35c}.
\]

\begin{proof}[Proof of Proposition \ref{K_n_lower}]
First note that the statement is true when $n = 3$ since the only integer $t$ in the allowed range is $t = 0$, where the inequality can be checked directly. Therefore we may assume that $n \geq 4$.

To describe our choice of $\varphi$, write $D_{n}=\left\{ (j,k)\in[n-1]^{2}:j\neq k\right\}$ and set $\omega=e^{2\pi i / n}$.
For $(j,k)\in D_{n}$, define
\[
h_{j,k}(v)=\begin{cases}
\omega, & v=v_{j}\\
\omega^{-1}, & v=v_{k}\\
1, & \text{else}
\end{cases}.
\]
It is clear that $h_{j,k}\in\HH$, so the function $f_{j,k}:\G\rightarrow\T$ given by 
\[
f_{j,k}(\eta)=\prod_{v\in V}h_{j,k}(v)^{\eta(v)}=\omega^{\eta(v_{j})-\eta(v_{k})}
\]
is an eigenfunction for the sandpile chain with eigenvalue 
\[
\lambda_{1}=\frac{1}{n}\sum_{v\in V}h(v)=1-\frac{1}{n}\left[2-2\cos\left(\frac{2\pi}{n}\right)\right].
\]
As $\lambda_{1}$ does not depend on $(j,k)$, the function
\[
\varphi(\eta) := \frac{1}{(n-1)(n-2)}\sum_{(j,k)\in D_{n}}f_{j,k}(\eta)
\]
is also in the $\lambda_{1}$-eigenspace. Moreover, $\varphi$ is $\R$-valued because 
$f_{j,k} = \overbar{f_{k,j}}$.

To apply \eqref{lower_bound_ineq}, we first observe that since $\varphi(id)=1$ and $P\varphi=\lambda_{1}\varphi$, 
\[
\EE_{P_{id}^{t}}[\varphi]=\lambda_{1}^{t}\text{ and }\EE_{U}[\varphi]=0.
\]
(The latter expectation used the fact that $U$ is a left eigenfunction with eigenvalue $1\neq \lambda_{1}$, 
so $\EE_{U}[\varphi]=\left\langle U,\varphi\right\rangle =0$.)
So the numerator of \eqref{lower_bound_ineq} is
\[
2\left( \EE_{P_{id}^{t}}[\varphi]-\EE_{U}[\varphi]\right)^{2} = 2\lambda_{1}^{2t}.
\]

Next we need an upper bound on the denominator $\Var_{P_{id}^{t}}(\varphi)+\Var_{U}(\varphi)$.
Since $\HH$ is closed under pointwise multiplication, we have $h_{j,k} h_{l,m} \in \HH$ 
for all pairs $(j,k),(l,m)\in D_{n}$. The corresponding eigenfunction is 
\[
f_{j,k,l,m}(\eta) :=\prod_{v\in V}h_{j,k}(v)^{\eta(v)}h_{l,m}(v)^{\eta(v)}=f_{j,k}(\eta)f_{l,m}(\eta).
\]
The associated eigenvalue depends on the $4$-tuple $(j,k,l,m)$. For example, when $(j,k)=(l,m)$, the product $h_{j,k} h_{l,m}$ sends $v_{j}$ to $\omega^{2}$ and
$v_{k}$ to $\omega^{-2}$ and all other vertices to $1$, giving an eigenvalue of 
$1-\frac{1}{n}\left[2-2\cos\left(\frac{4\pi}{n}\right)\right]$.
If $(k,j)=(l,m)$, then $h_{j,k} h_{l,m}\equiv 1$. If $j,k,l$ are distinct and $k=m$, then $h_{j,k} h_{l,m}$ sends $v_{j}$ and $v_{l}$ to $\omega$ and $v_{k}$ to $\omega^{-2}$ and all other vertices to $1$, so the resulting eigenvalue is 
$1-\frac{1}{n}\left[3-2\omega-\omega^{-2}\right]$. 

Writing
\[
(n-1)^{2}(n-2)^{2}\varphi^{2} = \sum_{(j,k),(l,m) \in D_n} f_{j,k,l,m}
\]
as a linear combination of eigenfunctions, we can count the number of $f_{j,k,l,m}$ corresponding to each eigenvalue. This information is given in Table~\ref{table.eigen}.

\begin{table}[b]
\begin{center}
\begin{tabular}{|M{5cm}|M{6cm}|N}
\hline 
Eigenvalue & Multiplicity in $(n-1)^2(n-2)^2 \varphi^{2}$ & \\[5pt]
\hline 
\hline
$\lambda_{0}=1$ & $(n-1)(n-2)$ & \\[5pt]
\hline 
$\lambda_{1}=1-\frac{1}{n}\left[2-2\cos\left(\frac{2\pi}{n}\right)\right]$ & $2(n-1)(n-2)(n-3)$ & \\[5pt]
\hline 
$\lambda_{2}=1-\frac{1}{n}\left[2-2\cos\left(\frac{4\pi}{n}\right)\right]$ & $(n-1)(n-2)$ & \\[5pt]
\hline 
$\lambda_{3}=1-\frac{1}{n}\left[4-4\cos\left(\frac{2\pi}{n}\right)\right]$ & $(n-1)(n-2)(n-3)(n-4)$ & \\[5pt]
\hline 
$\lambda_{4}=1-\frac{1}{n}\left[3-2\omega-\omega^{-2}\right]$ & $(n-1)(n-2)(n-3)$ & \\[5pt]
\hline 
$\overline{\lambda_{4}}=1-\frac{1}{n}\left[3-2\omega^{-1}-\omega^{2}\right]$ & $(n-1)(n-2)(n-3)$ 
& \\[5pt]
\hline 
\end{tabular}
\smallskip
\caption{Expansion of $(n-1)^2(n-2)^2 \varphi^2$ in the eigenbasis for $P$. \label{table.eigen}}
\end{center}
\end{table}

It follows that
\[
\begin{split}
\Var_{P_{id}^{t}}(\varphi)+\Var_{U}(\varphi) 
& =\EE_{P_{id}^{t}}\left[\varphi^{2}\right]
+\EE_{U}\left[\varphi^{2}\right]-\EE_{P_{id}^{t}}\left[\varphi\right]^{2}-\EE_{U}\left[\varphi\right]^{2}\\
 & =\frac{2}{(n-1)(n-2)}+\frac{2(n-3)}{(n-1)(n-2)}\lambda_{1}^{t}+\frac{1}{(n-1)(n-2)}\lambda_{2}^{t} \\
& \qquad +\frac{(n-3)(n-4)}{(n-1)(n-2)}\lambda_{3}^{t}
+\frac{2(n-3)}{(n-1)(n-2)}\Re \left(\lambda_{4}^{t}\right)-\lambda_{1}^{2t}.
\end{split}
\]

To simplify, we note that $\lambda_3 \leq \lambda_1^2$. Thus
\[
\frac{(n-3)(n-4)}{(n-1)(n-2)}\lambda_{3}^{t}-\lambda_{1}^{2t}\leq 0.
\]
In addition, $|\lambda_4| \leq \lambda_1$, so $\Re(\lambda_4^t) \leq |\lambda_4|^t \leq \lambda_1^t$. This follows from the computation
\[
\lambda_1^2 - |\lambda_4|^2 = \frac{2}{n} \left(1 - \frac{4}{n} \right) \left[ 1 - \cos\left( \frac{4\pi}{n} \right) \right] + \frac{4}{n^2} \left[ \cos\left( \frac{2\pi}{n} \right) - \cos\left( \frac{6\pi}{n} \right) \right],
\]
where all of the terms are nonnegative since $n \geq 4$. Finally, $\lambda_2 < \lambda_1$. Therefore
\[
\Var_{P_{id}^{t}}(\varphi)+\Var_{U}(\varphi) \leq \frac{2}{(n-1)(n-2)} + \frac{4n-11}{(n-1)(n-2)} \lambda_1^t \leq \frac{6}{n^2} + \frac{5}{n} \lambda_1^t,
\]
again using $n \geq 4$ in the final inequality. The function
\[
R(t) = \frac{2 \lambda_1^{2t}}{\frac{6}{n^2} + \frac{5}{n} \lambda_1^t}
\]
thus satisfies the condition in \eqref{lower_bound_ineq}.

Next we find a lower bound on $\lambda_1^t$. Since $\cos(x) \geq 1 - x^2/2$, we have $\lambda_1 \geq 1 - 4\pi^2 / n^3$. Therefore
\[
t \log (\lambda_1) \geq \left( \frac{1}{4\pi^2} n^3\log(n) - cn^3 \right) \log\left( 1 - \frac{4\pi^2}{n^3} \right).
\]
Since $n \geq 4$, $4\pi^2 / n^3 \in [0,\pi^2/16]$. Now we use that $\log(1-x) \geq -x - x^2$ for $0 \leq x \leq \pi^2/16$. This is because the function $f(x) = x + x^2 + \log(1-x)$ satisfies $f(0) = 0$, $f(\pi^2/16) > 0$, and $f'(x) = x(1-2x)/(1-x)$, implying that $f$ is increasing on $[0, 1/2]$ and decreasing on $[1/2, \pi^2/16]$. This yields
\[
\begin{split}
t \log (\lambda_1) &\geq \left( \frac{1}{4\pi^2} n^3\log(n) - cn^3 \right) \left( -\frac{4\pi^2}{n^3} - \frac{16\pi^4}{n^6} \right) \\
&\geq -\log(n) + 4\pi^2 c - \frac{4\pi^2}{n^3} \log(n).
\end{split}
\]
Thus
\[
n \lambda_1^t = e^{\log(n) + t \log(\lambda_1)} \geq e^{-4\pi^2 \log(n) / n^3} e^{4\pi^2 c} \geq \alpha e^{4\pi^2 c},
\]
where $\alpha = e^{-4\pi^2 \log(4)/4^3}$. It follows that
\[
R(t) = \frac{2 (n \lambda_1^t)^2}{6 + 5(n \lambda_1^t)} \geq \frac{2 (\alpha e^{4\pi^2 c})^2}{6 + 5(\alpha e^{4\pi^2 c})} \geq \frac{2\alpha^2 e^{8\pi^2 c}}{(6 + 5\alpha) e^{4\pi^2 c}} = \left( \frac{2\alpha^2}{6 + 5\alpha} \right) e^{4\pi^2 c},
\]
where the first inequality used that the function $x \mapsto 2x^2/(6+5x)$ is increasing for $x \geq 0$. Since $2\alpha^2 / (6 + 5\alpha) > 0.04$, we conclude from \eqref{lower_bound_ineq} that
\[
\|P_{id}^t - U\|_\TV \geq 1 - \frac{4}{4 + 0.04 e^{4\pi^2 c}} = 1 - \frac{100}{100 + e^{4\pi^2 c}}. \qedhere
\]
\end{proof}

\subsection*{Complete bipartite graph}
\label{Complete bipartite graph}

Suppose that $G=K_{m,n}$ is the complete bipartite graph having vertices $\{u_{1},\ldots,u_{m},v_{1},\ldots,v_{n}\}$ and edges $\{\{u_{j},v_{k}\}\}_{j\in[m],k\in[n]}$. 
In this case, there are essentially two possible choices for the sink, $u_{m}$ or $v_{n}$. 
We will assume the former as the other case then follows by interchanging the $u$'s and $v$'s 
(or appealing to Proposition \ref{sink_change}). 
Any $h\in\HH$ must satisfy $h(u_{m})=1$ and thus 
$1=h(u_{m})^{n}=\prod_{k=1}^{n}h(v_{k})=h(u_{j})^{n}$ for all $j=1,\ldots,m$. 
If we let $h(u_{j})$ be arbitrary $n$th roots of unity for $j=1,\ldots,m-1$, then, writing $\rho=\prod_{j=1}^{m}h(u_{j})$, we must 
have $h(v_{k})^{m}=\rho$ for each $k=1,\ldots,n$. Letting $h(v_{1}),\ldots,h(v_{n-1})$ be arbitrary 
$m$th roots of $\rho$, 
$h(v_{n})$ is then determined by the condition $1=\prod_{k=1}^{n}h(v_{k})$. 
Any such function is multiplicative harmonic by construction, and there are $n^{m-1}m^{n-1}$ 
such functions corresponding to the different choices of $n$th roots of $1$ for $h(u_{1}),\ldots,h(u_{m-1})$ 
and $m$th roots of $\rho$ for $h(v_{1}),\ldots,h(v_{n-1})$. 
Since this is the number of spanning trees in $K_{m,n}$ \cite{Dieter}, 
we have found all possible choices for $h$.

When $3\leq m\leq n$, we claim that the discrete time sandpile chain has relaxation time $\trel = \Theta(n^{3})$. 
(The $m=2$ case will be discussed shortly.)
To see that this is so, observe that $d_{\ast}= n$, so Theorem \ref{lazy_gap_bd} implies that $\gamma_{d}\geq 8 / [n^2(m+n)] \geq 4 / n^3$.

Conversely, the function 
\[
h(v)=\begin{cases}
e^{\frac{2\pi i}{n}}, & v=u_{1}\\
e^{-\frac{2\pi i}{n}}, & v=u_{2}\\
1, & \text{else}
\end{cases} 
\]
is multiplicative harmonic and the corresponding eigenvalue has modulus 
\[
\left|\lambda_{h}\right|=\frac{1}{m+n}\left[m+n-2+2\cos\left(\frac{2\pi}{n}\right)\right]
=1-\frac{2}{m+n}\left(1-\cos\left(\frac{2\pi}{n}\right)\right),
\]
hence 
\[
\gamma_{d}\leq\frac{2}{m+n}\left(1-\cos\left(\frac{2\pi}{n}\right)\right)
\leq\frac{4\pi^{2}}{(m+n)n^{2}}
\leq \frac{4\pi^{2}}{n^{3}}.
\]

\subsection*{Torus}
\label{Torus}

Let $G = \Z_m \times \Z_m$ be the $m$-by-$m$ discrete torus (square grid with periodic boundary). Since $G$ is $4$-regular, each eigenvalue $\beta$ of the full Laplacian $\overbar{\Delta}$ arises from an eigenvalue $\lambda$ of the transition matrix $K$ for simple random walk on $G$, with $\beta=4(1-\lambda)$. Since $K$ is a tensor product of transition matrices for random walks on cycles, the eigenvalues of $K$ are 
$\lambda_{j,k}=\frac{1}{2}\left[\cos\left(\frac{2\pi j}{m}\right)+\cos\left(\frac{2\pi k}{m}\right)\right]$ for $j,k \in [m]$, so the matrix-tree theorem gives 
\[
\left|\G\right|=\frac{1}{m^2}\prod_{(j,k)\in[m]^2\setminus (m,m)}\left(4-2\cos\left(\frac{2\pi j}{m}\right)-2\cos\left(\frac{2\pi k}{m}\right)\right).
\]
Interpreting the log of the product as a Riemann sum shows that
\[
\frac{1}{m^2}\log\left(\left|\G(\Z_{m}\times\Z_{m})\right|\right)\to \frac{4\beta(2)}{\pi}\approx 1.1662\quad \text{as }m\to\infty
\]
where $\beta(2)$ is the Catalan constant \cite{GlaWu}.

At this point, it is convenient to state the following simple bound for random walks on abelian groups.

\begin{lemma}
\label{abelian_lower_bound}
Suppose that $A$ is an abelian group of order $N$ which is generated by a set $S=\{s_{1},\ldots,s_{n}\}$, 
let $\pi$ be the uniform distribution on $A$, and let $Q_{a}^{t}$ be the distribution of the random walk 
driven by the uniform measure on $S$ after $t$ steps started at $a$.
For any initial state $a$ and any $\e > 0$, 
\[
\| Q_{a}^{t}-\pi\|_{\TV}\geq 1-\e \enspace
\text{\em whenever } t\leq\log_2(\e N) - n.
\]
\end{lemma}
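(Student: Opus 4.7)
The plan is to bound the support of $Q_a^t$ by counting how many distinct group elements can possibly be reached in $t$ steps, and then use the fact that total variation distance from the uniform distribution is close to $1$ whenever the support of $Q_a^t$ is much smaller than $N$. Because $A$ is abelian, after $t$ steps the walk sits at $a + \sum_{j=1}^n k_j s_j$ for some tuple $(k_1,\ldots,k_n) \in \N^n$ with $k_1 + \cdots + k_n = t$, regardless of the order in which the generators were chosen. Hence
\[
|\Supp(Q_a^t)| \leq \binom{t+n-1}{n-1}.
\]

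The main estimate is the elementary bound $\binom{t+n-1}{n-1} \leq 2^{t+n-1}$, which follows from $\sum_{k} \binom{m}{k} = 2^m$ with $m = t+n-1$. When $t \leq \log_2(\e N) - n$, this yields
\[
|\Supp(Q_a^t)| \leq 2^{t+n-1} \leq \tfrac{1}{2} \e N \leq \e N.
\]

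Finally, taking $B = \Supp(Q_a^t)$ as the distinguishing set in the definition of total variation,
\[
\|Q_a^t - \pi\|_{\TV} \geq Q_a^t(B) - \pi(B) = 1 - \frac{|\Supp(Q_a^t)|}{N} \geq 1 - \e,
\]
which is the desired inequality.

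There is no real obstacle here: the argument is entirely combinatorial and does not require any of the spectral machinery developed earlier in the paper. The only point worth care is the passage from the ordered $t$-step trajectory to the unordered multiset count, which is precisely where commutativity is used.
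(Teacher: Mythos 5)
Your proof is correct and is essentially identical to the paper's: both use commutativity to bound $|\Supp(Q_a^t)|$ by the number of weak compositions of $t$ into $n$ parts, apply $\binom{t+n-1}{n-1}\le 2^{t+n-1}$, and then take the support as the distinguishing set in the total variation definition. The only cosmetic difference is that the paper writes the binomial as $\binom{t+n-1}{t}$ and folds the final arithmetic into one display.
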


\begin{proof}
Since $A$ is abelian, the random walk at time $t$ must be at an element of the form 
$as_{1}^{x_{1}}\cdots s_{n}^{x_{n}}$ where $x_{1},\ldots,x_{n}\in\N$ 
with $x_{1}+\cdots+x_{n}=t$. 
Letting $S_{t}$ denote the set of all elements of this form, 
we have $\left|S_{t}\right|\leq\binom{t+n-1}{t} \leq 2^{t+n-1}$.
Since $Q_{a}^{t}$ is supported on $S_{t}$,  
\[
\| Q_{a}^{t}-\pi\|_{\TV}
\geq\left|Q_{a}^{t}\left(S_{t}\right)-\pi\left(S_{t}\right)\right|
\geq1-\frac{2^{t+n-1}}{N}\geq 1-\e
\]
whenever $t\leq\log_2(\e N) - n$. 
\end{proof}

We can now estimate the mixing time for the sandpile chain on the torus.

\begin{prop}
\label{torus_bound}
Let $P$ be the transition operator for the discrete time sandpile chain on $\Z_{m}\times\Z_{m}$ and $U$ the 
stationary distribution. \\
For any $c\geq 2$,
\[
\| P_{id}^{t}-U\|_{\TV} \leq e^{-\frac{2}{5}c}
\enspace \text{\em for all } t\geq\frac{5}{2}m^{2}\log(m)+cm^{2}.
\]
There exists $m_0 > 0$ such that if $m \geq m_0$, then for any $c \geq 0$,
\[
\| P_{id}^{t}-U\|_{\TV}\geq 1- 2^{-c}
\enspace \text{\em for all } t\leq 0.68 m^{2} - c.
\]
\end{prop}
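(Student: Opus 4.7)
The torus $\Z_m \times \Z_m$ is $4$-regular, so $d_\ast = 4$ and $n = m^2$. I would plug these directly into Theorem \ref{smoothing-mixing-time}, which guarantees $\|P_{id}^t - U\|_2^2 \leq \e$ whenever $t \geq \frac{1}{16}(16+4)\, m^2 \log[2(m^2-1)(1 + 1/\e)] = \frac{5}{4}\, m^2 \log[2(m^2-1)(1+1/\e)]$. Since $\|\cdot\|_{\TV} \leq \frac{1}{2}\|\cdot\|_2$, to obtain $\|P_{id}^t - U\|_{\TV} \leq e^{-2c/5}$ I would take $\e = 4 e^{-4c/5}$, so that $\tfrac{1}{2}\sqrt{\e} = e^{-2c/5}$. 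For $c \geq 2$ one has $\tfrac{1}{4} e^{4c/5} \geq 1$, hence $\log(1 + \tfrac{1}{4}e^{4c/5}) \leq \log(\tfrac{1}{2}e^{4c/5}) = \tfrac{4c}{5} - \log 2$, and the logarithmic factor is at most $\log(2m^2) + \tfrac{4c}{5} - \log 2 = 2\log m + \tfrac{4c}{5}$. Multiplying by $\tfrac{5}{4}\, m^2$ yields exactly the target bound $\tfrac{5}{2}\, m^2 \log m + c m^2$.

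\textbf{Lower bound.} I would apply Lemma \ref{abelian_lower_bound} to the sandpile chain, viewed as a random walk on the abelian group $\G(\Z_m \times \Z_m)$ driven by a generating set of $n = m^2$ elements $\{\sigma_v\}_{v \in V}$, with $\e = 2^{-c}$. The lemma then gives $\|P_{id}^t - U\|_{\TV} \geq 1 - 2^{-c}$ whenever $t \leq \log_2|\G| - m^2 - c$. The Riemann-sum computation stated just before the proposition shows that $\frac{1}{m^2}\log|\G(\Z_m \times \Z_m)| \to \frac{4\beta(2)}{\pi}$; dividing by $\log 2$, $\frac{\log_2|\G|}{m^2} \to \frac{4\beta(2)}{\pi\log 2} \approx 1.6826$. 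Since $1.6826 > 1.68$, there exists $m_0$ such that for all $m \geq m_0$, $\log_2|\G| - m^2 \geq 0.68\, m^2$, and the claimed lower bound on $t$ follows.

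\textbf{Main obstacle.} Neither step requires any new technical device: both amount to plug-and-chug applications of results already established in Sections \ref{The Smoothing Parameter} and \ref{Examples}. The only quantitative delicacy is the narrow margin between the limiting constant $4\beta(2)/(\pi\log 2) \approx 1.6826$ and the threshold $1.68$ needed to extract the lower bound $0.68\, m^2$, which is precisely the reason the conclusion must be restricted to $m \geq m_0$ rather than being stated for every $m$.
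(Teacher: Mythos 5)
Your argument is correct and follows exactly the route the paper takes: Theorem \ref{smoothing-mixing-time} with $d_\ast = 4$, $n = m^2$, and $\e = 4e^{-4c/5}$ for the upper bound, and Lemma \ref{abelian_lower_bound} together with the asymptotic $\frac{1}{m^2}\log|\G| \to 4\beta(2)/\pi$ for the lower bound. The paper gives only the one-line pointer to those two results; you have correctly supplied the arithmetic, and the margin you identify ($1.6826 > 1.68$) is indeed the reason the lower bound requires $m \geq m_0$.
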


\begin{proof}
The upper bound follows from Theorem \ref{smoothing-mixing-time} and the lower bound from 
Lemma \ref{abelian_lower_bound} along with the fact that 
$\left|\G(\Z_{m}\times\Z_{m})\right|\geq e^{1.166m^2}$ for $m$ sufficiently large. 
\end{proof}

Note that the preceding estimate gives bounds which differ only by a log factor of the number of vertices, 
and it did not require computing any of the approximately $e^{1.166m^{2}}$ multiplicative 
harmonic functions.

\subsection*{Continuous time; Location of the sink}
\label{Discrete vs Continuous}

So far in this section, all our examples have been in discrete time. Recall from Section \ref{Discrete and continuous time} that the moduli of the eigenvalues (and thus the $L^2$ mixing time) of the discrete time sandpile chain do not depend on the location of the sink. 
However, changing the sink does affect the \emph{arguments} of the eigenvalues and thus the $L^2$ mixing 
time for the continuous time sandpile chain. The next two examples illustrate this dependence.

\begin{example}[Triangle with tail]
\label{Triangle with Tail}
Let $G = (V,E)$, where $V = \{u,v,w,w_1,\ldots,w_m\}$ and
\[
E = \{\{u,v\},\{u,w\},\{v,w\}, \{w,w_1\},\{w_1,w_2\},\ldots,\{w_{m-1},w_m\}\}.
\]
This graph is a triangle with a long `tail' attached and has $n=m+3$ vertices. 
Any multiplicative harmonic function $h$ on $G$ must satisfy $h(w) = h(w_1) = \cdots = h(w_m)$. 
This can be seen by starting from $w_m$ and working backwards.

Let $\HH_u$ be the group of multiplicative harmonic functions on $G$ with the sink placed at $u$. 
Since $G$ has three spanning trees, $|\HH_u| = 3$. For $z \in \{1, e^{2\pi i/3}, e^{4 \pi i/3} \}$, 
we define $h_z \in \HH_u$ by $h_z(u) = 1$, $h_z(w) = h_z(w_1) = \cdots = h_z(w_m) = z$, and 
$h_z(v) = z^2$. If $z = 1$ then $h_1 \equiv 1$ and the associated eigenvalue is $1$. 
If $z$ is one of the other cube roots of unity, the eigenvalue is $\lambda_{h_z} = (1 - \frac{3}{n})z$. 
Therefore the spectral gaps of the discrete and continuous time sandpile chains have different orders: 
$\gamma_d = \frac{3}{n}$ while 
$\gamma_c = \frac{3}{2}(1 - \frac{1}{n}) = \frac{n-1}{2} \gamma_d$. 
The $L^2$ distances from stationarity are also different:
\begin{align*}
\|P_{id}^t - U\|_2^2 &= 2 \left( 1 - \frac{3}{n} \right)^{2t}, \\
\|H_{id}^t - U\|_2^2 &\leq 2e^{-2t}.
\end{align*}

If the sink is placed at $v$, then the eigenvalues are the same as when the sink is at $u$. 
By contrast, if the sink is placed at $w$ or any of the $w_j$, then the multiplicative harmonic functions 
$\{h'_z : z^3 = 1\}$ are given by $h'_z(w) = h'_z(w_1) = \cdots = h'_z(w_m) = 1$, $h'_z(v) = z$, 
and $h'_z(u) = z^2$. The associated eigenvalues are $1$ and $1 - \frac{3}{n}$ with multiplicity $2$. 
Hence $\gamma_c = \gamma_d = \frac{3}{n}$.

For intuition on this example, suppose $\eta$ is a recurrent chip configuration on $G$. 
We take one step in the discrete time chain by adding a chip at a uniformly chosen vertex and then 
stabilizing. It can be shown that adding a chip at any of $w, w_1, \ldots, w_m$ leads to the same 
configuration $\eta'$, so $P(\eta, \eta') = \frac{n-2}{n}$. 
When the sink is at $u$ or $v$, the transition matrix can be written as
\[
P^{(u)} = P^{(v)} = \frac{1}{n} \begin{bmatrix} 1 & n-2 & 1 \\
1 & 1 & n-2 \\
n-2 & 1 & 1 \end{bmatrix},
\]
which is nearly periodic. 
When the sink is at $w$ or one of the $w_j$, the transition matrix is
\[
P^{(w)} = P^{(w_j)} = \frac{1}{n} \begin{bmatrix} n-2 & 1 & 1 \\
1 & n-2 & 1 \\
1 & 1 & n-2 \end{bmatrix}.
\]
Both discrete time chains converge to stationarity at the same rate, but the continuous time chain associated with $P^{(u)} = P^{(v)}$ converges much faster than the continuous time chain associated with $P^{(w)} = P^{(w_j)}$.
\end{example}

\begin{example}[Unbalanced complete bipartite graph]
\label{Unbalanced complete bipartite graph}
Let $G = K_{2,m}$ be the complete bipartite graph where one part has two vertices. 
Here $V = \{u_1,u_2, v_1,\ldots,v_m\}$ and $E = \{\{u_i,v_j\} : 1 \leq i \leq 2, 1 \leq j \leq m\}$. 
A full characterization of all the multiplicative harmonic functions was given previously.

When the sink is placed at $u_1$, the multiplicative harmonic function $h \in \HH_{u_1}$ given by 
$h(u_1) = 1$, $h(u_2) = e^{2\pi i (2/m)}$, and $h(v_j) = e^{2\pi i/m}$ for all $j$ maximizes both 
$|\lambda_h|$ and $\Re(\lambda_h)$ over all $h \not\equiv 1$. 
Writing $n = m+2$, we compute
\begin{align*}
\gamma_d &= 1 - |\lambda_h| = \frac{2}{n} \left[ 1 - \cos\left( \frac{2\pi}{n-2} \right) \right] 
\approx \frac{4 \pi^2}{n^3}, \\
\gamma_c &= 1 - \Re(\lambda_h) = \left[ 1 + \frac{2}{n} \cos\left( \frac{2\pi}{n-2} \right) \right] 
\left[ 1 - \cos\left( \frac{2\pi}{n-2} \right) \right] \approx \frac{2\pi^2}{n^2}, \\
\frac{\gamma_c}{\gamma_d} &= \frac{n}{2} + \cos\left( \frac{2\pi}{n-2} \right) \approx \frac{n}{2}.
\end{align*}
If instead the sink is placed at $v_1$, then the function $h' \in \HH_{v_1}$ given by $h'(u_1) = e^{-2\pi i/m}$, 
$h'(u_2) = e^{2\pi i/m}$, and $h'(v_j) = 1$ for all $j$ maximizes both $|\lambda_{h'}|$ and 
$\Re(\lambda_{h'})$ over all $h' \not\equiv 1$. 
In this case, $\lambda_{h'}$ is a positive real number and
\[
\gamma_d = \gamma_c = \frac{2}{n} \left[ 1 - \cos\left( \frac{2\pi}{n-2} \right) \right] 
\approx \frac{4 \pi^2}{n^3}.
\]
\end{example}

\subsection*{Rooted sums} 
\label{Rooted sums}

Let $G_{1}=(V_{1},E_{1}),\ldots,G_{k}=(V_{k},E_{k})$ be simple connected graphs with distinguished 
vertices $s_{1},\ldots,s_{k}$, and let $G=(V,E)$ be any simple connected graph with $V=\{v_{1},\ldots,v_{k}\}$. 
Construct a new graph $G'$ from the $G_{j}$'s by identifying each $s_{j}\in V_{j}$ with $v_{j}\in V$. 
That is, $G'=(V',E')$ with $V'=\bigcup_{j=1}^{k}V_{j}$ and $E'=\left(\bigcup_{j=1}^{k}E_{k}\right)\bigcup\left\{ \{s_{i},s_{j}\}:\{v_{i},v_{j}\}\in E\right\} $.
Let $\HH_{j}$ denote the group of multiplicative harmonic functions on $G_{j}$ with sink vertex $s_{j}$, 
$\HH$ the multiplicative harmonic functions on $G$ with sink vertex $v_{k}$, 
and $\HH'$ those on $G'$ with sink $s_{k}$. 

Define the map $\psi:\HH\times\HH_{1}\times\cdots\times\HH_{k}\to\HH'$ by $\psi\left(h,h_{1},\ldots,h_{k}\right)=h'$ where $h':V'\to\T$ is given by $h'(v)=h(v_{j})h_{j}(v)$ 
for $v\in V_{j}$. One easily checks that $\psi$ is an isomorphism: It is a well-defined injective 
homomorphism by routine verification, and it is surjective because every spanning tree in $G'$ is 
formed by choosing a spanning tree from each of $G,G_{1},\ldots,G_{k}$. 
Writing $n_{j}=\left|V_{j}\right|$, $n=\left|V'\right|=\sum_{j=1}^{k}n_{j}$, 
we see that the eigenvalue for the discrete time sandpile chain on $G'$ associated with $h'=\psi\left(h,h_{1},\ldots,h_{k}\right)$ is 
\[
\lambda_{h'} =\frac{1}{n} \sum_{v\in V'}h'(v)
=\frac{1}{n}\sum_{j=1}^{k}h(v_{j})\sum_{w\in V_{j}}h_{j}(w)
=\frac{1}{n}\sum_{j=1}^{k}n_{j}h(v_{j})\lambda_{h_{j}}.
\]

\subsection*{Is a large spectral gap compatible with a large sandpile group?}
\label{Is a large spectral gap compatible with a large sandpile group?}

Our last example is a graph whose sandpile chain mixes relatively quickly. It is an instance of the rooted sum construction above:
Let $G=P_{k}$ be a path on $k$ vertices, and for $j \in [k]$ let $G_{j}$ be a cycle on $m_{j}$ vertices,  
with $2<m_{1}\leq\cdots\leq m_{k}$. Since $\HH(P_{k})$ is trivial, the multiplicative harmonic functions 
on the rooted sum $G'$ are given by $h'(v)=h_{j}(v)$ for $v\in G_j$ with $h_{j}\in\HH(G_j)$.  The sandpile group of $G'$ is isomorphic to 
$(\Z / m_1 \Z) \times \cdots \times (\Z / m_k \Z)$,
and the largest nontrivial eigenvalue of the sandpile chain is
	\[ \lambda_{*}
	=1-\frac{m_{1}}{m_{1}+\cdots+m_{k}}, \]
corresponding to choosing any nontrivial multiplicative harmonic function $h_1$ on the smallest cycle $G_1$, and $h_{2}\equiv \cdots \equiv h_k \equiv 1$.

When $m_1= \cdots = m_k =m$ the rooted sum $G'$ has $mk$ vertices, sandpile group of order $m^{k}$, 
and sandpile spectral gap $\gamma_c = \gamma_d = 1/k$. Fixing $k$ gives a sequence of graphs, indexed by $m$, with gap 
uniformly bounded away from $0$.  

The same construction with $k = n^\alpha$ and $m = n^{1-\alpha}$ for fixed $0 < \alpha < 1$ gives a graph with $n$ vertices, sandpile group of order $e^{(1-\alpha)n^\alpha \log(n)}$, and sandpile spectral gap $1/n^\alpha \gg 1/ d_\ast^2 n$. We conclude with two questions.

\begin{enumerate}
\item[1.] Does there exist a graph sequence $G_n$ with $\gamma_d(G_n) > c >0$ such that the size of the sandpile group $|\G(G_n)|$ grows faster than any power of $n$?
\smallskip
\item[2.] Does there exist a graph sequence $G_n$ with $\log |\G(G_n)| = \Omega(n \log d_\ast)$ such that $\gamma_d(G_n) \gg 1/ d_\ast^2 n$?
\end{enumerate}

\section*{Acknowledgments}

We thank Spencer Backman, Shirshendu Ganguly, Alexander Holroyd, Yuval Peres and Farbod Shokrieh for inspiring conversations.

\bibliographystyle{plain}
\bibliography{sandpile}

\end{document}